
\documentclass[12pt]{amsart}
\usepackage{amssymb,comment,url}  
\usepackage[all]{xy}

\newcommand{\Aff}{{\mathbb A}}
\newcommand{\Aut}{{\operatorname{Aut}}}
\newcommand{\Br}{{\operatorname{Br}}}
\newcommand{\C}{{\mathbb C}}
\newcommand{\cc}{{\mathbf c}}
\newcommand{\D}{{\mathbf D}}
\newcommand{\Diag}{{\operatorname{Diag}}}
\newcommand{\Gal}{{\operatorname{Gal}}}
\newcommand{\GL}{{\operatorname{GL}}}
\newcommand{\Gm}{{\mathbb G}_m}
\newcommand{\Hbar}{{\overline{H}}}
\newcommand{\Hom}{{\operatorname{Hom}}}
\newcommand{\isom}{{\, \cong \,}}
\newcommand{\Jac}{{\operatorname{Jac}}}
\newcommand{\K}{{\mathbb K}}
\newcommand{\Kbar}{{\overline{K}}}
\newcommand{\LL}{{\mathcal L}}
\newcommand{\la}{{\lambda}}

\newcommand{\Ob}{{\operatorname{Ob}}}
\newcommand{\PP}{{\mathbb P}}
\newcommand{\PGL}{{\operatorname{PGL}}}
\newcommand{\Q}{{\mathbb Q}}
\newcommand{\ra}{{\, \longrightarrow \,}}
\newcommand{\rank}{{\operatorname{rank}}}
\newcommand{\phibar}{{\overline{\varphi}}}
\newcommand{\ratto}{{\,- \to\,}}
\newcommand{\rhobar}{{\overline{\rho}}}
\newcommand{\Sec}{{\operatorname{Sec}}}
\newcommand{\Sha}{\mbox{\wncyr Sh}}
\newcommand{\SL}{{\operatorname{SL}}}
\newcommand{\Tr}{\operatorname{Tr}}
\newcommand{\Z}{{\mathbb Z}}

\newfont{\wncyr}{wncyr10 at 12pt}
\newfont{\wncyrten}{wncyr10 at 10pt}

\def\MatP{{\begin{pmatrix}
    1   &    0   &    0    & \cdots &      0     \\
    0   & \zeta_n  &    0    & \cdots &      0     \\
    0   &    0   & \zeta_n^2 & \cdots &      0     \\ 
 \vdots & \vdots & \vdots  &        &   \vdots   \\
    0   &   0    &    0    & \cdots & \zeta_n^{n-1}  
\end{pmatrix}}}

\def\MatQ{{\begin{pmatrix}
    0   &    0   & \cdots &   0    &      1     \\ 
    1   &    0   & \cdots &   0    &      0     \\
    0   &    1   & \cdots &   0    &      0     \\ 
 \vdots & \vdots &        & \vdots &   \vdots   \\
    0   &   0    & \cdots &   1    &      0       
\end{pmatrix}}}

\newenvironment{Proof}{\par\noindent{\sc Proof:}}%
                      {\hspace*{\fill}\nobreak$\Box$\par\medskip}
                      {\hspace*{\fill}\nobreak$\Box$\par\medskip}

\newtheorem{Proposition}{Proposition}[section]
\newtheorem{Theorem}[Proposition]{Theorem}
\newtheorem{Lemma}[Proposition]{Lemma}
\newtheorem{Corollary}[Proposition]{Corollary}

\newtheorem{Conjecture}[Proposition]{Conjecture}

\theoremstyle{definition}

\newtheorem{Definition}[Proposition]{Definition}
\newtheorem{Remark}[Proposition]{Remark}
\newtheorem{Remarks}[Proposition]{Remarks}
\newtheorem{Example}[Proposition]{Example}

\addtolength{\hoffset}{-1cm}
\addtolength{\textwidth}{2cm}

\renewcommand{\theenumi}{\roman{enumi}}

\begin{document}

\date{14th October 2011}
\title[Invariant theory for the elliptic normal quintic]%
{Invariant theory for the~elliptic~normal~quintic,\\ 
I. Twists of X(5)}
\author{Tom Fisher}
\address{University of Cambridge,
         DPMMS, Centre for Mathematical Sciences,
         Wilberforce Road, Cambridge CB3 0WB, UK}
\email{T.A.Fisher@dpmms.cam.ac.uk}

\begin{abstract}
A genus one curve of degree $5$ is defined by the $4 \times 4$ Pfaffians
of a $5 \times 5$ alternating matrix of linear forms on $\PP^4$. We
describe a general method for investigating the invariant theory of
such models. We use it to explain how we found our algorithm for 
computing the invariants~\cite{g1inv} and to extend our method 
in~\cite{g1hess} for computing equations for visible elements of 
order $5$ in the Tate-Shafarevich group of an elliptic curve. As a
special case of the latter we find a formula for the family of elliptic
curves $5$-congruent to a given elliptic curve in the case the
$5$-congruence does {\em not} respect the Weil pairing. We also give
an algorithm for doubling elements in the $5$-Selmer group of an 
elliptic curve, and make a conjecture about the matrices representing
the invariant differential on a genus one normal curve of arbitrary degree.
\end{abstract}

\maketitle

\section{Introduction}
\label{sec:intro}

A genus one normal curve $C \subset \PP^{n-1}$ of degree $n \ge 3$ 
is a genus one curve embedded by a complete linear system of 
degree $n$. If $n \ge 4$ then the homogeneous ideal of $C$ is generated
by a vector space of quadrics of dimension $n(n-3)/2$. 
\begin{Definition}
A {\em genus one model} (of degree $5$) is a $5 \times 5$ alternating 
matrix of linear forms on $\PP^4$. We write $C_\phi \subset \PP^4$
for the subvariety defined by the $4 \times 4$ Pfaffians of $\phi$,
and say that $\phi$ is {\em non-singular} if $C_\phi$ is a smooth curve
of genus one.
\end{Definition}
It is a classical fact that (over $\C$) every genus one normal 
curve of degree~5
is of the form $C_\phi$ for some $\phi$. Importantly for us, the proof 
using the Buchsbaum-Eisenbud structure theorem~\cite{BE1},\cite{BE2} 
shows that this is still true over an arbitrary ground field.

There is a natural action of 
$\GL_5 \times \GL_5$ on the space of genus one models. The first
factor acts as $M : \phi \mapsto M \phi M^T$ and the second factor
by changing co-ordinates on $\PP^4$. To describe this
situation we adopt the following notation. Let $V$ and $W$ be 
$5$-dimensional vector spaces with bases $v_0, \ldots, v_4$ and
$w_0, \ldots, w_4$. 
The dual bases for $V^*$ and $W^*$ will be
denoted $v^*_0, \ldots, v^*_4$ and $w^*_0, \ldots, w^*_4$.
We identify the space of genus one models with
$\wedge^2 V \otimes W$ via
\[ \phi = (\phi_{ij})  \longleftrightarrow \textstyle\sum_{i<j} (v_i \wedge v_j) 
\otimes \phi_{ij}(w_0, \ldots, w_4). \]
With this identification the action of $\GL_5 \times \GL_5$ becomes 
the natural action of $\GL(V) \times \GL(W)$ on $\wedge^2 V \otimes W$.
By squaring and then identifying $\wedge^4 V \isom V^*$ there
is a natural map
\begin{equation}
\label{def:P2}
 P_2 : \wedge^2 V \otimes W \to V^* \otimes S^2 W = \Hom(V,S^2W). 
\end{equation}
Explicitly $P_2(\phi) = (v_i \mapsto p_i(w_0,\ldots, w_4))$ 
where $p_0, \ldots, p_4$ are the $4 \times 4$ Pfaffians of $\phi$.
Thus $V$ may be thought of as the space of quadrics defining $C_{\phi}$
and $W$ as the space of linear forms on $\PP^4$. Despite this clear
geometric distinction, we show in this paper 
that certain covariants that mix up the roles of $V$ and $W$ have
interesting arithmetic applications.

We work over a perfect field $K$ with characteristic not equal to 
$2$, $3$ or $5$. The co-ordinate ring $K[\wedge^2 V \otimes W]$ 
is a polynomial ring in $50$ variables.

\begin{Theorem}
\label{thm:inv} 
The ring of invariants for 
$\SL(V) \times \SL(W)$ acting on $K[\wedge^2 V \otimes W]$ 
is generated by invariants $c_4$ and $c_6$ 
of degrees $20$ and $30$. If we scale them as specified in \cite{g1inv}
and put $\Delta = (c_4^3 - c_6^2)/1728$ then
\begin{enumerate}
\item A genus one model $\phi$ is non-singular if and only if 
$\Delta(\phi) \not=0$.
\item If $\phi$ is non-singular then $C_\phi$ has Jacobian elliptic
curve 
\[ y^2 = x^3- 27 c_4(\phi) x -54 c_6(\phi). \]
\end{enumerate}
\end{Theorem}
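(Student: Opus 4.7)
The plan is to reduce to a one-parameter family of models that meets every $\SL(V)\times\SL(W)$-orbit of non-singular models, and to carry out the invariant-theoretic computation there. Three observations frame the strategy. First, $\dim(\wedge^2 V \otimes W) = 50$ while $\dim(\SL(V)\times\SL(W)) = 48$, so the categorical quotient is $2$-dimensional, consistent with a polynomial ring on two invariants. Second, the data of a non-singular model amounts, up to a finite stabiliser, to a pair $(E,\omega)$ consisting of an elliptic curve together with an invariant differential, and such pairs have invariant ring $K[c_4,c_6]$. Third, the classical theory of $X(5)$ exhibits every genus one normal quintic over $\Kbar$, after a change of coordinates, as a member of a Heisenberg-symmetric pencil.

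Concretely I would work with the Hesse--Klein pencil $\phi_{\la,\mu} = \la\, \Phi_1 + \mu\, \Phi_2$, where $\Phi_1,\Phi_2$ are alternating matrices of linear forms constructed from the diagonal and cyclic-shift generators $P, Q$ of the Heisenberg group $H_5 \subset \SL_5$. Two main tasks follow. Task~A: show that every non-singular $\phi$ is $\SL(V)\times\SL(W)$-equivalent over $\Kbar$ to some $\phi_{\la,\mu}$. This amounts to the classical statement that $X(5)$ is the moduli of elliptic curves with a Heisenberg-symmetric quintic embedding, and requires the geometric input that the degree-$5$ embedding of $E$ admits a symplectic basis of $E[5]$ adapted to $H_5$. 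Task~B: compute the Pfaffians of $\phi_{\la,\mu}$, extract a Weierstrass form of the Jacobian by any standard algorithm, and read off explicit binary forms $F_{20}(\la,\mu)$ and $F_{30}(\la,\mu)$ of degrees $20$ and $30$. Matching the normalisation to that of~\cite{g1inv} then yields the equation $y^2 = x^3 - 27 c_4 x - 54 c_6$ in~(ii) along the pencil, and by invariance everywhere.

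For the first assertion of the theorem, the normaliser of $H_5$ in $\SL_5$ acts on the pencil through $\SL_2(\mathbb{F}_5)$, the binary icosahedral group, and Klein's classical computation produces generators of its invariant ring in $K[\la,\mu]$ of degrees $12$, $20$, $30$. The scalar subgroup of $\SL(V)\times\SL(W)$, consisting of pairs of fifth roots of unity, forces every $\SL(V)\times\SL(W)$-invariant to have degree divisible by $5$; this discards the degree-$12$ generator and leaves the polynomial ring $K[F_{20},F_{30}]$. Since the pencil meets every orbit, any $\SL(V)\times\SL(W)$-invariant is determined by its restriction, hence is a polynomial in $c_4, c_6$. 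Part~(i) then follows from~(ii) because $C_\phi$ is smooth if and only if its Jacobian elliptic curve is non-singular, equivalently $\Delta(\phi)\neq 0$. The principal difficulty is Task~A (the choice of symplectic basis and the descent from $\Kbar$ to $K$) together with the scalar bookkeeping in Task~B; Klein's invariant-theoretic input is classical.
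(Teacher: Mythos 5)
The paper does not prove this theorem at all: it is quoted verbatim from \cite[Theorem 4.4]{g1inv}, so there is no internal argument to compare against. Your strategy --- reduce to the Heisenberg-invariant pencil, apply Klein's computation of the invariants of $\SL_2(\Z/5\Z)$ acting on $K[\la,\mu]$ (degrees $12,20,30$ with one relation), and use the scalar subgroup of $\SL(V)\times\SL(W)$ to force $5 \mid \deg$ and so discard the degree-$12$ generator $D$ --- is in fact the strategy of that reference and of Sections 3--4 of the present paper, and the individual observations (the dimension count $50-48=2$, the identification of the degree-divisible-by-$5$ subring of Klein's ring with $K[c_4,c_6]$ since $D^5=\Delta$, the role of Task~A as the main geometric input) are all correct. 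But there are two genuine gaps.

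First, your argument only embeds the ring of invariants into $K[F_{20},F_{30}]$: density of the non-singular locus plus Task~A gives injectivity of the restriction map, and the weight congruence pins down the image inside Klein's ring. It does not show that $F_{20}$ and $F_{30}$ are themselves restrictions of genuine \emph{polynomial} invariants on the $50$-dimensional space, which is the other half of ``generated by $c_4$ and $c_6$''. This existence statement is not automatic: Section~\ref{sec:discov} of this paper stresses that a discrete invariant on the pencil need not extend to a polynomial one --- it may extend only to a rational map with a power of $\Delta$ in the denominator ($D$ itself, of degree $12$, is exactly such an example, and the integral-weight test you apply is necessary but not claimed to be sufficient; sufficiency is the content of \cite{paperII}). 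In \cite{g1inv} this is handled by constructing $c_4$ and $c_6$ explicitly. Second, part (i) cannot be obtained the way you state it. The direction ``$\phi$ non-singular $\Rightarrow \Delta(\phi)\neq 0$'' does follow from Task~A and the fact that $D$ vanishes precisely at the singular members of the pencil; but the converse concerns \emph{singular} models, which need not be equivalent to any member of the pencil, so restriction to the pencil is unavailable and one must show separately that $\Delta$ vanishes on the whole closed locus of singular models. Your one-line deduction of (i) from (ii) is circular, since (ii) is conditional on non-singularity.
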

\begin{Proof}
See \cite[Theorem 4.4]{g1inv}.
\end{Proof}

The invariants $c_4$ and $c_6$ are too large to write down 
as explicit polynomials. Nonetheless we gave an algorithm 
for evaluating them in \cite[Section 8]{g1inv}. By Theorem~\ref{thm:inv}
this gives an algorithm for computing the Jacobian. In \cite{g1hess}
we studied a covariant we call the Hessian. Explicitly it is a 
$50$-tuple of homogeneous polynomials of degree $11$ defining a map
$H : \wedge^2 V \otimes W \to \wedge^2 V \otimes W$. Again rather
than write down these polynomials we gave an algorithm
for evaluating them. The Hessian allows us to compute certain 
twists of the universal family of elliptic curves parametrised by $X(5)$. 
We used it to find equations for visible elements of order $5$ in
the Tate-Shafarevich group of an elliptic curve, and to recover the
formulae of Rubin and Silverberg \cite{RubSil1} for families of $5$-congruent
elliptic curves. However in both these applications we were restricted
to $5$-congruences that respect the Weil pairing. In this paper we
remove this restriction. 

In Sections~\ref{sec:covs},~\ref{sec:heis},~\ref{sec:discov} 
we explain a general
method for investigating the covariants associated to a genus one
model. In particular we explain how we found 
the algorithm for computing the invariants in \cite[Section 8]{g1inv}.
One key result on the existence of covariants is left to 
a sequel to this paper \cite{paperII}. 
Our account is still however self-contained, since
in Section~\ref{sec:constr} we give explicit constructions of each of the
covariants used in the second half of this paper. In 
Section~\ref{sec:congr} we use the covariants to write
down families of $5$-congruent elliptic curves. In
Section~\ref{sec:double} we give a formula for doubling in the $5$-Selmer
group of an elliptic curve and extend our method in \cite{g1hess} for
computing visible elements of the Tate-Shafarevich group.
In particular we check local solubility for each of the visible 
elements of order~$5$ in the Weil-Ch\^atelet group that were considered
in \cite{CM}. 
In Section~\ref{sec:invdiff} we study a covariant that describes the 
invariant differential. This is needed not only for some of the constructions 
in Section~\ref{sec:constr}, but also leads us to make a conjecture 
about the matrices representing the invariant differential on 
a genus one normal curve of arbitrary degree.

\section{Covariants}
\label{sec:covs}

We recall that a {\em rational representation} of a linear algebraic
group $G$ is a morphism of group varieties $\rho_Y : G \to \GL(Y)$.

\begin{Definition}
\label{def:cov}
Let $Y$ be a rational representation of $\GL(V) \times \GL(W)$.
A {\em covariant} (for $Y$) is a polynomial map 
$F: \wedge^2 V \otimes W \to Y$ such that $F \circ g = \rho_Y(g) F$
for all $g \in \SL(V) \times \SL(W)$.
\end{Definition}

The covariants in the case $Y=K$ is the trivial representation are
the invariants as described in Theorem~\ref{thm:inv}.
For general $Y$ the covariants form a module over the ring
of invariants. 
In all our examples $Y$ will be {\em homogeneous} by which 
we mean there exist integers $r$ and $s$ such that
\[ \rho_Y(\lambda I_V, \mu I_W)= \lambda^r \mu^s I_Y \]
for all $\lambda, \mu \in K^\times$. A polynomial map 
$F: \wedge^2 V \otimes W \to Y$ is {\em homogeneous} of degree $d$
if $F(\lambda \phi)=\lambda^d F(\phi)$ for all $\lambda \in K$,
equivalently $F$ is represented by a tuple of homogeneous polynomials 
of degree $d$.
 
\begin{Lemma}
\label{wtlemma}
Let $F: \wedge^2 V \otimes W \to Y$ be a covariant and suppose 
that both $Y$ and $F$ are homogeneous. Then there exist integers $p$ and $q$ 
called the {\em weights} of $F$ such that
\begin{equation*}
 F \circ g = (\det g_V)^p (\det g_W)^q \rho_Y(g) \circ F 
\end{equation*}
for all $g = (g_V,g_W) \in  \GL(V) \times \GL(W)$.
Moreover if $Y$ has degree $(r,s)$ then 
\begin{equation}
\label{wtformula}
\begin{array}{rcl}
2 \deg F & = & 5p+r \\ 
\deg F & = & 5q+s.
\end{array} 
\end{equation}
\end{Lemma}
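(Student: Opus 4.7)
The plan is to extract, from the $\SL$-equivariance of $F$, the congruences mod~$5$ that force candidate weights to be integers, and then to compute $F(g\phi)$ on a general element of $\GL(V)\times\GL(W)$ by splitting off the scalar part. Throughout I may assume $F$ is not identically zero, since the case $F=0$ is vacuous.

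First I would apply the equivariance to the central elements of $\SL(V)\times\SL(W)$. For $\zeta\in\Kbar$ with $\zeta^5=1$, the element $(\zeta I_V,I_W)$ lies in $\SL(V)\times\SL(W)$ and acts on $\wedge^2 V\otimes W$ by $\phi\mapsto\zeta^2\phi$. Combining the $\SL$-equivariance of $F$ with the homogeneity of $F$ (of degree $d$) and of $Y$ (of bidegree $(r,s)$) gives $\zeta^{2d}F(\phi)=\zeta^r F(\phi)$. Choosing $\phi$ with $F(\phi)\neq 0$ and letting $\zeta$ range over $\mu_5$ forces $2d\equiv r\pmod 5$. An identical argument with $(I_V,\eta I_W)$, which acts by $\phi\mapsto\eta\phi$, yields $d\equiv s\pmod 5$. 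I then define the integers $p=(2d-r)/5$ and $q=(d-s)/5$, which are precisely the relations (\ref{wtformula}) claimed in the lemma.

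Next I would establish the equivariance formula. Both sides of the claimed identity are $K$-polynomial maps in the entries of $g$, so it suffices to verify it after base change to $\Kbar$. Over $\Kbar$ any $g_V\in\GL(V)$ factors as $g_V=\lambda h_V$ with $\lambda\in\Kbar^\times$, $\lambda^5=\det g_V$, and $h_V\in\SL(V)$; similarly $g_W=\mu h_W$. Then $g\cdot\phi=\lambda^2\mu\cdot(h_V,h_W)\cdot\phi$, and homogeneity of $F$ combined with $\SL$-equivariance give $F(g\cdot\phi)=\lambda^{2d}\mu^d\rho_Y(h_V,h_W)F(\phi)$. Using $\rho_Y(\lambda h_V,\mu h_W)=\lambda^r\mu^s\rho_Y(h_V,h_W)$ (homogeneity of $Y$) to eliminate $\rho_Y(h_V,h_W)$ yields $F(g\cdot\phi)=\lambda^{2d-r}\mu^{d-s}\rho_Y(g)F(\phi)=(\det g_V)^p(\det g_W)^q\rho_Y(g)F(\phi)$, as required.

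The only potential obstacle is the need to pass to $\Kbar$ in order to extract the fifth roots $\lambda,\mu$, but this is illusory: the final identity involves only $\lambda^5=\det g_V$ and $\mu^5=\det g_W$, so it is manifestly defined over $K$ and descends automatically. The real content of the lemma is therefore the mod-$5$ divisibility produced by the action of $\mu_5\times\mu_5\subset\SL(V)\times\SL(W)$, which together with the scalar decomposition $\GL=\Gm\cdot\SL$ forces the weight formulas~(\ref{wtformula}).
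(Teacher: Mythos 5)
Your proof is correct, but it takes a genuinely different route from the paper's. The paper argues abstractly: since $F$ is $\SL(V)\times\SL(W)$-equivariant, the discrepancy between $F\circ g$ and $\rho_Y(g)\circ F$ is a one-dimensional rational representation of $\GL(V)\times\GL(W)$, and the only such characters are integer powers of $\det g_V$ and $\det g_W$ --- this gives the existence of integer weights in one stroke, and the relations~(\ref{wtformula}) are then read off by specialising $g$ to pairs of scalar matrices. You run the argument in the opposite direction: you first extract the congruences $2d\equiv r$ and $d\equiv s\pmod 5$ from the action of $\mu_5\times\mu_5\subset\SL(V)\times\SL(W)$, \emph{define} $p$ and $q$ by the formulas~(\ref{wtformula}), and then verify the equivariance identity by hand via the factorisation $g=(\lambda h_V,\mu h_W)$ over $\Kbar$ followed by descent. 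Your version is more self-contained --- it effectively unwinds the proof of the character classification for $\GL_n$ rather than quoting it --- at the cost of the base-change-and-descent step; the paper's is shorter given that standard fact. Two cosmetic points: since $\rho_Y$ is only a rational representation and $p,q$ may be negative, ``polynomial maps in the entries of $g$'' should read ``regular functions on $\GL(V)\times\GL(W)\times(\wedge^2 V\otimes W)$'' (the density and descent arguments are unaffected, even over a finite $K$); and your exclusion of $F\equiv 0$ is the right convention, since $\deg F$ is not well defined in that case.
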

\begin{Proof}
The only 1-dimensional rational representations 
of $\GL_n$ are integer powers of the determinant. This proves
the first statement. The second statement follows
from the special case where $g_V$ and $g_W$ are scalar matrices. 
\end{Proof}

The first example of a covariant is the identity map
\[U:\wedge^2 V \otimes W \to \wedge^2 V \otimes W.\] 
It has degree 1 and weights $(p,q)=(0,0)$. The Pfaffian map
$P_2$ defined in~(\ref{def:P2}) is a covariant of  
degree 2 with weights $(p,q)=(1,0)$. Subject to picking a 
basis for $V$, $\phi \in \wedge^2 V \otimes W$ is a
$5 \times 5$ alternating matrix of linear forms and 
$P_2(\phi)$ is its vector of $4 \times 4$ Pfaffians.
The determinant of the Jacobian matrix of these 
$5$ quadrics defines a covariant
\begin{equation}
\label{def:S10}
S_{10}:\wedge^2 V \otimes W \to S^5 W. 
\end{equation}
It has degree 10 and weights $(p,q)=(4,1)$. It is shown in 
\cite[VIII.2.5]{Hu} that if $\phi \in \wedge^2 V \otimes W$ is 
non-singular then $S_{10}(\phi)$ is an equation for the secant variety 
of $C_\phi \subset \PP^4$.

Our initial motivation for studying the covariants was that
by constructing a large enough supply of covariants we might
eventually arrive at an algorithm for computing the invariants, and
so by Theorem~\ref{thm:inv} an algorithm for computing the  
Jacobian. This programme was successful, leading to the algorithm 
in \cite{g1inv}. We have subsequently found 
that some of the covariants have interesting arithmetic 
applications in their own right.

In the next two sections we explain our methods for studying the covariants.
The key idea is that although the covariants are routinely too
large to write down, their restrictions to the Hesse family,
i.e. the universal family of elliptic curves over $X(5)$,
are much easier to write down and are (nearly) characterised by their 
invariance properties under an appropriate action of 
$\SL_2(\Z/5\Z)$. Thus our work resolves, albeit in one particular
case, what is described in \cite[Chapter V,\S22]{AR}
as the ``mysterious role of invariant theory''.

\section{The extended Heisenberg group}
\label{sec:heis}

We take $n \ge 5$ an odd integer. In this section we work over 
an algebraically closed field $K$ of characteristic not dividing $n$, 
and let $\zeta_n \in K$ be a primitive $n$th root of unity. We write
$E[n]$ for the $n$-torsion subgroup of an elliptic curve $E$ and
$e_n : E[n] \times E[n] \to \mu_n$ for the Weil pairing.

\begin{Definition}
(i) The modular curve $Y(n)= X(n) \setminus \{ \text{cusps} \}$ 
parametrises triples $(E, P_1,P_2)$ where $E$ is an elliptic curve 
and $P_1,P_2$ are a basis for $E[n]$ with $e_n(P_1,P_2) = \zeta_n$. \\
(ii) Let $Z(n) \subset \PP^{n-1}$ be the subvariety defined by
$a_0 = 0$, $a_{n-i} = -a_i$ and $\rank (a_{i-j} a_{i+j}) \le 2$
where $(a_0 : \ldots :a_{n-1})$ are co-ordinates on $\PP^{n-1}$ and
the subscripts are read mod $n$.
\end{Definition}

There is an action of $\SL_2(\Z/n\Z)$ on $Y(n)$ given by
\[ \left( \begin{matrix} a & b \\ c & d \end{matrix} \right) :
(E,P_1,P_2) \mapsto (E,d P_1 - c P_2,-b P_1 + a P_2). \]
Let
$S = \left( \begin{smallmatrix} 0 & 1 \\ - 1 & 0 \end{smallmatrix} \right)$ 
and 
$T = \left( \begin{smallmatrix} 1 & 1 \\ 0 & 1 \end{smallmatrix} \right)$ 
be the usual generators for $\SL_2(\Z)$. By abuse of notation we also
write $S$ and $T$ for their images in $\SL_2(\Z/n\Z)$.

\begin{Theorem}
\label{thm:rho}
There is an embedding $X(n) \subset \PP^{n-1}$ such that 
\begin{enumerate}
\item $X(n) \subset Z(n)$ with equality if $n$ is prime.
\item The action of $\SL_2(\Z/n\Z)$ on $X(n)$ is given by
$\rhobar : \SL_2(\Z/n\Z) \to \PGL_n(K)$ where
\[ \qquad \rhobar(S) \propto (\zeta_n^{ij})_{i,j=0,\ldots,n-1} 
\qquad \rhobar(T) \propto \Diag(\zeta_n^{i^2/2})_{i=0,\ldots,n-1} \]
\end{enumerate}
Moreover $\rhobar$ lifts uniquely to a representation 
$\rho : \SL_2(\Z/n\Z) \to \SL_n(K)$. 
\end{Theorem}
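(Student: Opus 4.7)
The plan is to construct the embedding via the universal elliptic curve with full level $n$ structure, identify the defining equations of $Z(n)$ using theta-function relations, and then read off the projective representation $\rhobar$ from how the level structure transforms under $\SL_2(\Z/n\Z)$.

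For each $(E,P_1,P_2) \in Y(n)$, the symmetric line bundle $\LL = \mathcal{O}_E(n[O])$ defines a closed embedding $E \hookrightarrow \PP^{n-1} = \PP(H^0(E,\LL)^*)$. The theta (Heisenberg) group $H_n$ of $\LL$ admits a unique irreducible representation on $H^0(E,\LL)$, and the level structure $(P_1,P_2)$ fixes a basis (unique up to a common scalar) in which translations by $P_1$ and $P_2$ act as the matrices $\MatQ$ and $\MatP$ respectively. Composing the resulting family of embeddings with the zero section of the universal elliptic curve and extending across the cusps gives $X(n) \hookrightarrow \PP^{n-1}$. To verify (i), one checks the defining equations of $Z(n)$ on the image: $a_0 = 0$ reflects that the coordinate indexed by $0$ is an odd section under $[-1]_E$ and so vanishes at the fixed point $O$; the identity $a_{n-i} = -a_i$ encodes the $[-1]_E$-action on the remaining sections; and the rank~$2$ condition on $(a_{i-j}a_{i+j})$ is the classical Riemann theta relation for products of theta functions evaluated at $O$, under which the associated bilinear form factors through the Kummer map $E \to E/[\pm 1]$. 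For $n$ prime, equality $X(n)=Z(n)$ then follows from a dimension-and-degree comparison, using that $Z(n)$ is a $1$-dimensional subvariety of the expected degree containing the irreducible curve $X(n)$.

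For (ii), the $\SL_2(\Z/n\Z)$-action on $Y(n)$ preserves $E$ and merely changes the basis of $E[n]$, so it must act on $X(n) \subset \PP^{n-1}$ by projective transformations, defining $\rhobar$. To compute $\rhobar(T)$ and $\rhobar(S)$, one exploits that conjugation by $\rhobar(g)$ sends the Heisenberg pair $(\MatQ,\MatP)$ to another pair realising the transported level structure. Since $T$ sends $(P_1,P_2) \mapsto (P_1,P_2-P_1)$, the intertwiner $\rhobar(T)$ is, up to scalar, uniquely determined; direct computation in the Schr\"odinger model identifies it with $\Diag(\zeta_n^{i^2/2})$, well defined since $n$ is odd. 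Similarly $S$ sends $(P_1,P_2) \mapsto (P_2,-P_1)$, for which the required matrix is, up to scalar, the finite Fourier transform $(\zeta_n^{ij})$.

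To lift $\rhobar$ to $\rho: \SL_2(\Z/n\Z) \to \SL_n(K)$, the explicit matrices above are rescaled by suitable elements of $K^\times$ (the normalisation of the Fourier matrix involving a classical Gauss sum) so as to lie in $\SL_n$, and the relations of $\SL_2(\Z/n\Z)$, most notably $S^4=I$, $(ST)^3=S^2$ and $T^n=I$, are checked against the normalised lifts. Uniqueness follows because any two lifts would differ by a character $\SL_2(\Z/n\Z) \to \mu_n$, and the defining relations together with the odd-order constraint on $n$ leave no non-trivial such character available. The main obstacles, in my view, are the clean derivation of the rank~$2$ condition from the Riemann theta relations, and the coherent scalar normalisation required for the $\SL_n$-lift to satisfy all the group relations simultaneously.
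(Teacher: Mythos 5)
The paper itself offers no proof of this theorem: it cites V\'elu for the equality $X(n)=Z(n)$ when $n$ is prime and the author's preprint on families of $n$-congruent elliptic curves (plus \cite{HM}, \cite{SBT} for the lift when $n=5$) for everything else. Your outline follows the same theta-function/Heisenberg-group route that those references take, so the strategy is the right one. But as it stands the write-up has genuine gaps at exactly the three points where the theorem has content.

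First, the equality $X(n)=Z(n)$ for $n$ prime is dispatched as ``a dimension-and-degree comparison'': you have not computed the degree of $Z(n)$, nor shown that it is reduced, irreducible and one-dimensional, and this is precisely the non-trivial content of V\'elu's theorem rather than something that follows formally from $X(n)\subset Z(n)$. Second, for the existence of the lift it is not enough to check $T^n=S^4=1$ and $(ST)^3=S^2$ against your rescaled matrices: these relations present $\SL_2(\Z)$ (as an amalgam), not $\SL_2(\Z/n\Z)$, so you must verify a full presentation of the finite group, and the scalar bookkeeping there comes down to explicit quadratic Gauss sum evaluations that you have not performed. Third, and most seriously, your uniqueness argument asserts that ``the defining relations together with the odd-order constraint on $n$ leave no non-trivial character $\SL_2(\Z/n\Z)\to\mu_n$ available,'' with no justification. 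What is actually needed is that $\SL_2(\Z/n\Z)$ admits no non-trivial homomorphism to $\mu_n$; this holds when $\gcd(n,6)=1$ because the group is then perfect (in particular for $n=5$, the only case the paper uses), but for $3\mid n$ --- which the running hypothesis ``$n\ge 5$ odd'' allows --- the group surjects onto $\SL_2(\Z/3\Z)^{\mathrm{ab}}\cong\Z/3\Z\subset\mu_n$, and twisting a lift by such a character produces another lift into $\SL_n(K)$. So the uniqueness step requires either a proof of perfectness under an explicit hypothesis on $n$ or a different argument; as written it is an unsupported assertion at the one place where the claim is delicate.
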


\begin{Proof}
The condition for equality in (i) is due to
V\'elu \cite{V}. The remaining statements are proved in 
\cite[Section 2]{highercongr}. Proofs that $\rhobar$ lifts
in the case $n=5$ may also be found in \cite{HM}, \cite{SBT}.
\end{Proof}

The Heisenberg group of level $n$ is 
\[ H_n = \langle \sigma, \tau | \sigma^n = \tau^n = [ \sigma, [\sigma,\tau]]
= [\tau, [\sigma,\tau]] = 1 \rangle. \]
It is a non-abelian group of order $n^3$.
The centre is a cyclic group of order $n$ generated by
$\zeta=[\sigma,\tau] = \sigma \tau \sigma^{-1} \tau^{-1}$.
We write $\Hbar_n$ for the quotient of $H_n$ by its centre 
and identify $\Hbar_n \isom (\Z/n\Z)^2$ via 
$\overline{\sigma} \mapsto (1,0)$ and $\overline{\tau} \mapsto (0,1)$.
Since each automorphism of $H_n$ induces an automorphism of $\Hbar_n$
there is a natural group homomorphism
$\beta: \Aut(H_n) \to \GL_2(\Z/n\Z)$. The kernel of $\beta$ is $\Hbar_n$ acting 
as the group of inner automorphisms. We may thus 
identify $\Aut(H_n)$ as a group of affine transformations.
Let $\iota \in \Aut(H_n)$ be the involution given by
$\iota(\sigma) = \sigma^{-1}$ and $\iota(\tau) = \tau^{-1}$. 
(Any involution $\iota$ with $\beta(\iota) = -I$ would do, but we have
picked one for definiteness.)
Since $n$ is odd there is a unique section $s_\beta$ for $\beta$
with $s_\beta(-I)=\iota$. (This means that
$s_\beta : \GL_2(\Z/n\Z) \to \Aut(H_n)$ is a group homomorphism 
with $\beta \circ s_\beta = {\rm id}$.) 
Indeed the image of $s_\beta$ is the centraliser of $\iota$
in $\Aut(H_n)$. 

\begin{Definition}
\label{defexheisen}
The extended Heisenberg group is the semi-direct product
\[ H_n^+ = H_n \ltimes \SL_2(\Z/n\Z), \]
with group law $ (h,\gamma) (h',\gamma')= (h \, s_\beta(\gamma)h', 
\gamma \gamma')$.
\end{Definition}

\begin{Remark}
(i) The map $s_\beta$ 
is explicitly given by \[ s_\beta ((
 \begin{smallmatrix} a & b \\ c & d \end{smallmatrix} )) 
: \sigma \mapsto \zeta^{-ac/2} \sigma^a \tau^c \, ;
\, \, \tau \mapsto \zeta^{-bd/2} \sigma^b \tau^d.\]
(ii) The group $H_5^+$ was used by Horrocks and Mumford \cite{HM}
in their construction of an indecomposable rank 2 vector bundle
on $\PP^4$. In fact the order of $H_5^+$ appears in the 
title of their paper. 
\end{Remark}

We now identify $H_n$ as a subgroup of $\SL_n(K)$ via 
the Schr\"odinger representation $\theta : H_n \to \SL_n(K)$ where
\begin{equation}
\label{heismats}
 \theta(\sigma) = \MatP, \quad \theta(\tau) = \MatQ. 
\end{equation}
Since $\theta(\zeta) = \zeta_n I_n$ this identifies the centre
of $H_n$ with $\mu_n$. Let $N_n$ be the
normaliser of $H_n$ in $\SL_n(K)$. It may be checked that the
automorphisms of $\Hbar_n$ induced by conjugation by
elements of $N_n$ are precisely those that preserve the 
commutator pairing $\Hbar_n \times \Hbar_n \to \mu_n$.
This proves the surjectivity of the map $\alpha$
in the following commutative diagram with exact rows and columns.
\begin{equation*}
\xymatrix{& 0 \ar[d] & 0 \ar[d] \\ 
& \mu_n \ar@{=}[r] \ar[d] & \mu_n \ar[d] & 0 \ar[d] \\
0 \ar[r] & H_n     \ar[r] \ar[d] & N_n \ar[r]^-\alpha \ar[d]
 &  \SL_2(\Z/n\Z) \ar[d] \ar[r] & 0 \\
0 \ar[r] & \Hbar_n  \ar[d] \ar[r] & \Aut(H_n) \ar[d] \ar[r]^-\beta  
&  \GL_2(\Z/n\Z) \ar[d]^{\det} \ar[r] & 0 \\ & 0 & (\Z/n\Z)^\times 
\ar@{=}[r] \ar[d] & (\Z/n\Z)^\times \ar[d] \\ & & 0 & 0    }
\end{equation*}

The restriction of $s_\beta$ to $\SL_2(\Z/n\Z)$ defines
a projective representation 
\begin{equation*}
 \rhobar : \SL_2(\Z/n\Z) \to \PGL_n(K). 
\end{equation*}
Comparison with the proof of Theorem~\ref{thm:rho} shows that 
this is the same as the projective representation considered there.

\begin{Lemma}
\label{lem:taut}
The following objects are in natural 1-1 correspondence. 
\begin{enumerate}
\item Sections $s_\alpha$ for $\alpha$ compatible with $s_\beta$. 
\item Lifts of $\rhobar:\SL_2(\Z/n\Z) \to \PGL_n(K)$ to 
$\rho:\SL_2(\Z/n\Z) \to \SL_n(K)$. 
\item Extensions of $\theta:H_n \to \SL_n(K)$ to $\theta^+:H_n^+ \to \SL_n(K)$.
\end{enumerate}
\end{Lemma}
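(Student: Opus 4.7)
The strategy is to construct direct bijections $(\text{i}) \leftrightarrow (\text{ii})$ and $(\text{ii}) \leftrightarrow (\text{iii})$. For $(\text{i}) \leftrightarrow (\text{ii})$, the top row of the commutative diagram identifies $\rhobar$ with the composition $\SL_2(\Z/n\Z) = N_n / \mu_n \hookrightarrow \PGL_n(K)$. Since the preimage of $N_n/\mu_n$ under $\SL_n(K) \to \PGL_n(K)$ is $N_n$ itself, any lift $\rho$ of $\rhobar$ automatically takes values in $N_n$ and is a homomorphism section of $\alpha$; conversely, any such section, composed with $N_n \hookrightarrow \SL_n(K)$, is a lift of $\rhobar$.

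For $(\text{ii}) \leftrightarrow (\text{iii})$, given $\rho$ I set $\theta^+(h,\gamma) := \theta(h)\,\rho(\gamma)$. Unwinding the semidirect-product law $(h,\gamma)(h',\gamma') = (h\,s_\beta(\gamma)h',\,\gamma\gamma')$, one sees that $\theta^+$ is a group homomorphism if and only if
\[ \rho(\gamma)\,\theta(h')\,\rho(\gamma)^{-1} = \theta\bigl(s_\beta(\gamma)h'\bigr) \qquad (h' \in H_n,\ \gamma \in \SL_2(\Z/n\Z)), \]
i.e.\ if and only if the section $s_\alpha = \rho$ (from the first bijection) is compatible with $s_\beta$. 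Conversely, given $\theta^+$, I recover $\rho(\gamma) := \theta^+(1,\gamma)$: the displayed identity forces $\rho(\gamma) \in N_n$ with $\alpha(\rho(\gamma)) = \gamma$, so $\rho$ is a lift of $\rhobar$. These constructions are evidently mutually inverse.

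The one substantive ingredient is the automatic compatibility of any section of $\alpha$ with $s_\beta$, which links (i) to (iii). The conjugation action of $s_\alpha(\gamma) \in N_n$ on $H_n$ factors through $N_n/\mu_n$ and so yields a homomorphism section $\psi \colon \SL_2(\Z/n\Z) \to \Aut(H_n)$ of $\beta|_{\SL_2(\Z/n\Z)}$ depending only on $\rhobar$. Since $n$ is odd, both $\psi$ and $s_\beta|_{\SL_2(\Z/n\Z)}$ are determined by their common value on $-I$, so it suffices to show $\psi(-I) = \iota$. Using $S^2 = -I$, a direct calculation gives $\rhobar(-I) \propto (\zeta_n^{ij})^2$, which is proportional to the ``reversal'' matrix sending $e_i \mapsto e_{-i}$; conjugation by this matrix inverts both generators in~\eqref{heismats}, so $\psi(-I) = \iota$, as required.
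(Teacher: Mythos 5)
Your overall architecture is sound, and your (ii)$\leftrightarrow$(iii) dictionary ($\theta^+(h,\gamma)=\theta(h)\rho(\gamma)$, $\rho(\gamma)=\theta^+(1,\gamma)$) is exactly the one in the paper. But there is a genuine error in the step linking (i) to (ii): the claim that \emph{any} section of $\alpha$ is automatically compatible with $s_\beta$ (and, in your first paragraph, that any section of $\alpha$ descends to $\rhobar$) is false. If $s_\alpha$ is a compatible section and $h\in H_n$ is non-central, then $\gamma\mapsto \theta(h)\,s_\alpha(\gamma)\,\theta(h)^{-1}$ is again a homomorphism section of $\alpha$, but the section of $\beta|_{\SL_2(\Z/n\Z)}$ it induces by conjugation is $\mathrm{inn}_h\circ s_\beta(\cdot)\circ \mathrm{inn}_h^{-1}\ne s_\beta(\cdot)$, because $\Hbar_n$ has no nonzero $\SL_2(\Z/n\Z)$-invariants. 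This is precisely why item (i) carries the qualifier ``compatible with $s_\beta$''. Your final paragraph silently assumes that $s_\alpha$ lifts $\rhobar$ --- that is the only way $\psi$ can ``depend only on $\rhobar$'' --- so what you actually prove there is that lifts of $\rhobar$ are compatible, not that all sections of $\alpha$ are. That leaves the converse direction of (i)$\leftrightarrow$(ii) unproved: you still need that a section compatible with $s_\beta$ descends to $\rhobar$ in $\PGL_n(K)$.

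The missing step is easy but must be said: the paper defines $\rhobar$ \emph{as} $s_\beta|_{\SL_2(\Z/n\Z)}$ under the isomorphism $N_n/\mu_n\cong\beta^{-1}(\SL_2(\Z/n\Z))\subset\Aut(H_n)$, so ``conjugation by $s_\alpha(\gamma)$ induces $s_\beta(\gamma)$'' is literally the statement ``$s_\alpha(\gamma)$ maps to $\rhobar(\gamma)$ in $\PGL_n(K)$''; equivalently, two elements of $N_n$ inducing the same automorphism of $H_n$ differ by an element of the centraliser of the irreducible $\theta(H_n)$, i.e.\ by a scalar in $\mu_n$. With that observation --- which is essentially the paper's entire proof of (i)$\leftrightarrow$(ii) --- your explicit computation that $\rhobar(S)^2$ is proportional to the reversal matrix becomes unnecessary (though it is a correct verification that the explicit $\rhobar$ of Theorem~\ref{thm:rho} induces $s_\beta$, a point the paper handles by reference to the proof of that theorem). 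Finally, note the slip ``$\SL_2(\Z/n\Z)=N_n/\mu_n$'': the quotient $N_n/\mu_n$ is an extension of $\SL_2(\Z/n\Z)$ by $\Hbar_n$, not $\SL_2(\Z/n\Z)$ itself; what your argument actually needs, and what is true, is only that $\rhobar$ has image inside $N_n/\mu_n$.
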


\begin{proof}
The projective representation $\rhobar$ is defined by the
requirement
\begin{equation*}
\rhobar(\gamma) \, h \, \rhobar(\gamma)^{-1} = s_\beta (\gamma) h  
\quad \text{ for all }
h \in H_n, \, \, \gamma \in \SL_2(\Z/n\Z).
\end{equation*} 
But to say that $s_\alpha$ is a section for $\alpha$
compatible with $s_\beta$ means
\begin{equation}
\label{salpha}
s_\alpha(\gamma) \, h \, s_\alpha(\gamma)^{-1} = 
s_\beta (\gamma) h  \quad \text{ for all } h \in H_n, \, \, 
\gamma \in \SL_2(\Z/n\Z).
\end{equation}
The correspondence between (i) and (ii) is clear. Now given $s_\alpha$ 
compatible with $s_\beta$ we define 
$\theta^+(h,\gamma)=h s_\alpha(\gamma)$
and check using~(\ref{salpha}) that $\theta^+$ is a homomorphism.
Conversely, given $\theta^+$ we set $s_\alpha(\gamma)=\theta^+(1,\gamma)$.
This gives the correspondence between (i) and (iii).
\end{proof}

From the final statement of Theorem~\ref{thm:rho} we
immediately deduce

\begin{Theorem}
\label{thm:exheis}
The Schr\"odinger representation $\theta:H_n \to \SL_n(K)$ extends
uniquely to a representation $\theta^+:H_n^+ \to \SL_n(K)$.
Moreover the normaliser of $\theta(H_n)$ in $\SL_n(K)$ is 
$\theta^+(H_n^+)$.
\end{Theorem}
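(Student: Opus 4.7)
The plan is to deduce both assertions from the unique lift $\rho: \SL_2(\Z/n\Z) \to \SL_n(K)$ of $\rhobar$ supplied by the last sentence of Theorem~\ref{thm:rho}, using Lemma~\ref{lem:taut} as the bridge. Since Lemma~\ref{lem:taut} establishes a three-way correspondence between lifts of $\rhobar$, $s_\beta$-compatible sections $s_\alpha$ of $\alpha$, and extensions $\theta^+$ of $\theta$ to $H_n^+$, the existence and uniqueness of $\theta^+$ follow immediately from (ii)$\Leftrightarrow$(iii) together with Theorem~\ref{thm:rho}. No further computation is needed for the first claim.

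For the normaliser statement, I would take the section $s_\alpha$ of $\alpha$ corresponding to $\rho$ via Lemma~\ref{lem:taut}(i), so that $\theta^+(h,\gamma) = h \cdot s_\alpha(\gamma)$. By construction $\theta^+(H_n^+) = \theta(H_n) \cdot s_\alpha(\SL_2(\Z/n\Z))$, and both factors sit inside $N_n$, giving the inclusion $\theta^+(H_n^+) \subseteq N_n$. For the reverse inclusion I would carry out a one-line diagram chase on the middle row of the commutative diagram, namely the exact sequence $0 \to H_n \to N_n \xrightarrow{\alpha} \SL_2(\Z/n\Z) \to 0$: given $g \in N_n$ with $\gamma = \alpha(g)$, the element $g \cdot s_\alpha(\gamma)^{-1}$ lies in $\ker \alpha = \theta(H_n)$, so $g \in \theta^+(H_n^+)$. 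Injectivity of $\theta^+$ is automatic, since $s_\alpha$ is a section and so $s_\alpha(\SL_2(\Z/n\Z))$ meets $\theta(H_n)$ only in the identity.

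The hard part has already been discharged upstream: it is the uniqueness (and existence) of the lift $\rho$ in Theorem~\ref{thm:rho}, which is the substantive input from \cite{highercongr} (or \cite{HM}, \cite{SBT} in the case $n=5$), and which is precisely what forces $\alpha$ to be canonically split. Given that input, the proof of Theorem~\ref{thm:exheis} is purely formal and involves no further analytic or computational difficulty.
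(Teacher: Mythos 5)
Your proposal is correct and follows exactly the route the paper intends: the paper's entire proof is the one-line remark that the theorem is ``immediately deduced'' from the final statement of Theorem~\ref{thm:rho}, and your argument simply fills in that deduction via the correspondence in Lemma~\ref{lem:taut} together with the exactness of the middle row of the commutative diagram (which yields the normaliser claim by the diagram chase you describe). No genuinely different ideas are involved; your write-up is just a more explicit version of the same argument.
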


\begin{Remark}
The Schr\"odinger representation has $\phi(n)$ conjugates
obtained by either changing our choice of $\zeta_n$ or precomposing
with an automorphism of $H_n$. We may apply Theorem~\ref{thm:exheis}
to any one of these representations. 
This is important for our applications
and explains why we were careful to define $H_n^+$ before
introducing the Schr\"odinger representation. 
\end{Remark}

\section{Discrete covariants}
\label{sec:discov}

In this section we work over an algebraically closed field of
characteristic not equal to $2$, $3$ or $5$. The Hesse family 
of elliptic normal quintics (studied for example in 
\cite{g1hess}, \cite{Hu}) is given by
\begin{equation*}
\begin{array}{lccl}
u:& \Aff^2 & \to & \wedge^2 V \otimes W \\
& (a,b) & \mapsto & a \sum (v_{1} \wedge v_{4}) w_0 +
b  \sum (v_{2} \wedge v_{3}) w_0
\end{array} 
\end{equation*}
where the sums are taken over all cyclic permutations of the
subscripts mod $5$. The models $u(a,b)$, called the {\em Hesse models},
are representative of all genus one models in the following sense.

\begin{Lemma}
\label{lem:hessorb}
Every non-singular genus one model is $\GL(V) \times \GL(W)$-equivalent to
a Hesse model.
\end{Lemma}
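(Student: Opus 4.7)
The plan is to exploit the Heisenberg symmetry of the universal family $X(5) \subset \PP^4$. Given a non-singular model $\phi$, I want to use a $\GL(V) \times \GL(W)$-transformation to put the Heisenberg group $H_5$ into standard form acting on both $V$ and $W$, and then identify the $H_5$-fixed subspace of $\wedge^2 V \otimes W$ with the span of the Hesse models.

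First I would fix a point $O \in C_\phi$ so that $(C_\phi, O)$ becomes an elliptic curve $E$ with embedding given by $|5 \cdot O|$, i.e.\ with $H^0(E, \mathcal{O}(5O)) = W^*$. Translation by $5$-torsion preserves this complete linear system, so $E[5]$ acts on $\PP(W^*) = \PP^4$ preserving $C_\phi$, giving a projective representation $E[5] \to \PGL(W^*)$. Choosing a basis $P_1, P_2$ of $E[5]$ with $e_5(P_1, P_2) = \zeta_5$, this lifts uniquely (since $5$ is odd) to a faithful representation $\theta_W : H_5 \to \GL(W^*)$. By the Stone--von Neumann theorem for finite Heisenberg groups, any such faithful representation of given central character is isomorphic to the Schr\"odinger representation, so after applying an element of $\GL(W)$ we may assume $\theta_W$ is dual to the standard $\theta$ of (\ref{heismats}).

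Next, the induced $H_5$-action on $S^2 W$ preserves the 5-dimensional subspace of quadrics defining $C_\phi$. Through the Pfaffian map $P_2$, this transfers to a 5-dimensional representation of $H_5$ on $V$. Choosing the central character so that $\wedge^2 V \otimes W$ has trivial central character (forced by the degrees $2$ in $V$ and $1$ in $W$), uniqueness of the Schr\"odinger representation again lets me conjugate by an element of $\GL(V)$ so that the action on $V$ is standard. After these two changes of basis, $\phi$ lies in the $H_5$-fixed subspace $(\wedge^2 V \otimes W)^{H_5}$.

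Finally, I would compute this fixed subspace directly from the explicit matrices $\theta(\sigma)$ and $\theta(\tau)$: diagonalising with respect to $\theta(\sigma)$ and then imposing $\theta(\tau)$-invariance shows that $(\wedge^2 V \otimes W)^{H_5}$ is $2$-dimensional, spanned by $u(1,0)$ and $u(0,1)$. Hence $\phi = u(a,b)$ for some $(a,b) \in \Aff^2$. The main obstacle is the bookkeeping with central characters: one must arrange the Schr\"odinger representations on $V$ and $W$ so that $\wedge^2 V \otimes W$ has trivial central character, so that \emph{invariance} (rather than merely equivariance up to a character) is the correct notion. Once this is arranged, both the existence of the standardisation and the dimension count reduce to finite linear algebra.
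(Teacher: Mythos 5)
The paper itself gives no argument here --- it simply cites \cite[Proposition 4.1]{g1hess} --- so your proposal has to be judged as a reconstruction of that proof, and in outline it is the right one: lift the translation action of $E[5]$ on $C_\phi\subset\PP(W^*)$ to a Heisenberg representation, standardise it by Stone--von Neumann, transport the action to $V$ via the space of quadrics, standardise again, and then identify the $H_5$-fixed subspace of $\wedge^2V\otimes W$ with the span of the Hesse models. Your central-character bookkeeping is also correct: with $\sigma:w_i\mapsto\zeta_5^iw_i$ on $W$ the centre acts on the quadrics, hence on $V$, by the square, so $\wedge^2V\otimes W$ has central character $2\cdot2+1\equiv 0\pmod 5$, and Lemma~\ref{heisenchars} gives $\dim(\wedge^2V\otimes W)^{H_5}=2$, matching the two Hesse generators $u(1,0)$, $u(0,1)$.

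The one step you assert without justification --- and it is the step that genuinely needs an argument --- is the passage from ``the $H_5$-action preserves $C_\phi$ and acts equivariantly on its space of quadrics'' to ``$\phi$ itself lies in $(\wedge^2V\otimes W)^{H_5}$''. Arranging that $P_2(\phi)\in\Hom(V,S^2W)$ is $H_5$-invariant only tells you that $h\cdot\phi$ has the same ordered $5$-tuple of Pfaffians as $\phi$; a genus one model is \emph{not} a priori determined by its Pfaffians. You need the uniqueness part of the Buchsbaum--Eisenbud structure theorem (equivalently \cite[Lemma~7.6]{g1hess}, which the present paper quotes in Section~\ref{sec:congr}, where the lifting is only ``unique up to sign'') to conclude $h\cdot\phi=\pm\phi$, and then the fact that $H_5$ has odd order to see that the resulting sign character of $H_5$ is trivial. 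Without this, you have only put the curve and its quadrics into Hesse position, not the alternating matrix. The gap is entirely fixable along these lines, but it should be named: it is precisely where the degree-$5$ Pfaffian formalism (unlike the ternary cubic case, where the model \emph{is} the equation of the curve) requires an extra uniqueness input.
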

\begin{Proof}
See \cite[Proposition 4.1]{g1hess}
\end{Proof}

The Hesse models are invariant under the following actions of the
Heisenberg group $H_5$ on $V$ and $W$.
\begin{equation}
\label{thetavw}
\begin{array}{lll} \smallskip
\theta_V: H_5 \to \SL(V)\,; & \sigma : v_i \mapsto \zeta_5^{2i} v_i\,;
& \tau : v_i \mapsto v_{i+1} \\
\theta_W: H_5 \to \SL(W)\,; & \sigma : w_i \mapsto \zeta_5^{i} w_i\,;
&  \tau : w_i \mapsto w_{i+1}.
\end{array} 
\end{equation}
Our definition of the Hesse family differs from that in 
\cite[Section 4]{g1hess} by a change of co-ordinates. This is 
to make the formulae~(\ref{thetavw}) more transparent than those
immediately preceding \cite[Lemma 7.7]{g1hess}.

Since $\theta_V$ and $\theta_W$ are conjugates of the 
Schr\"odinger representation they extend by Theorem~\ref{thm:exheis}
to representations of $H_5^+$. By abuse of notation we continue to 
write these representations as $\theta_V$ and $\theta_W$.
Let $Y$ be a rational representation of $\GL(V) \times \GL(W)$.
Then $\theta_V$ and $\theta_W$ 
define an action $\theta_Y$ of $H_5^+$ on $Y$. We write $Y^{H_5}$
for the subspace of $Y$ fixed by $H_5$. 
Since $H_5^+$ sits in an exact sequence
\begin{equation*}
 0 \ra H_5 \ra H_5^+ \ra \Gamma \ra 0 
\end{equation*}
there is an action of $\Gamma = \SL_2(\Z/5\Z)$ on $Y^{H_5}$. In the 
case $Y = \wedge^2 V \otimes W$ 
we find (by using Lemma~\ref{heisenchars} below to 
compute $\dim Y^{H_5}$) that 
${\rm Im}(u) = (\wedge^2 V \otimes W)^{H_5}$. 
The action of $\Gamma$ is then described by a representation 
$\chi_1 : \Gamma \to \GL_2(K)$ with the defining property that
\begin{equation}
\label{defchi}
 u  \circ \chi_1(\gamma) = \theta_{\wedge^2 V \otimes W}(\gamma) \circ u
\end{equation}
for all $\gamma \in \Gamma$.

\begin{Definition}
Let $\pi : \Gamma \to \GL(Z)$ be a representation. A 
{\em discrete covariant} (for $Z$) is a polynomial map $f : \Aff^2 \to Z$ 
satisfying
\[ u  \circ \chi_1(\gamma) = \pi (\gamma) \circ u\]
for all $\gamma \in \Gamma$.
\end{Definition} 

\begin{Theorem} 
\label{thm:dcov}
Let $F : \wedge^2 V \otimes W \to Y$ be a covariant. Then 
$f = F \circ u : \Aff^2 \to Y^{H_5}$ is a discrete covariant. Moreover
$F$ is uniquely determined by $f$. 
\end{Theorem}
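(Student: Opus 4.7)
The plan is to verify the two assertions separately: first check the image condition and discrete covariance of $f$, then establish uniqueness via the density of the $\GL\times\GL$-orbit of the Hesse family.

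For the first part, note that each $\theta_V(h)$, $\theta_W(h)$ with $h\in H_5$ lies in $\SL(V)$, respectively $\SL(W)$, so the induced action $\theta_{\wedge^2V\otimes W}(h)$ sits in the group under which $F$ is covariant. Since every $u(a,b)$ lies in $(\wedge^2V\otimes W)^{H_5}$, applying $F$ to the identity $\theta_{\wedge^2V\otimes W}(h)u(a,b)=u(a,b)$ yields $\theta_Y(h)f(a,b)=f(a,b)$, whence $f(a,b)\in Y^{H_5}$. For the intertwining relation, given $\gamma\in\Gamma$ I would lift it to $\tilde\gamma\in H_5^+$ using the extension $\theta^+$ of Theorem~\ref{thm:exheis}; the subspace $Y^{H_5}$ is stable under $\theta_Y(\tilde\gamma)$ because $H_5$ is normal in $H_5^+$, and the restriction $\pi(\gamma):=\theta_Y(\tilde\gamma)|_{Y^{H_5}}$ is independent of the lift (any two lifts differ by an element of $H_5$, which acts trivially on $Y^{H_5}$) and assembles into a representation $\pi\colon\Gamma\to\GL(Y^{H_5})$. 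Combining~(\ref{defchi}) with the $\SL\times\SL$-covariance of $F$ then gives
\[ f(\chi_1(\gamma)(a,b))=F\bigl(\theta_{\wedge^2V\otimes W}(\tilde\gamma)\,u(a,b)\bigr)=\theta_Y(\tilde\gamma)f(a,b)=\pi(\gamma)f(a,b). \]

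For uniqueness, I would reduce to the case of homogeneous $F$ by decomposing into homogeneous components (each is still a covariant, and the decomposition is visible already at the level of $f$ since $u$ is linear in $(a,b)$). Under the mild assumption that $Y$ is homogeneous — satisfied in all applications in the paper — Lemma~\ref{wtlemma} upgrades the $\SL\times\SL$-covariance to the full transformation law $F(g\phi)=(\det g_V)^p(\det g_W)^q\rho_Y(g)F(\phi)$ for $g\in\GL(V)\times\GL(W)$. Lemma~\ref{lem:hessorb} writes any non-singular $\phi$ as $g\cdot u(a,b)$, so $F(\phi)$ is entirely determined by $f(a,b)=F(u(a,b))$. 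Two covariants yielding the same $f$ therefore agree on the Zariski-dense locus $\{\Delta\neq 0\}$, and by polynomiality they agree everywhere.

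The main technical point is the extension of the covariance from $\SL\times\SL$ to $\GL\times\GL$: without some form of homogeneity one could not express $F(\phi)$ at a general non-singular $\phi$ in terms of the restriction of $F$ to $\mathrm{Im}(u)$. The reduction to homogeneous $F$ combined with Lemma~\ref{wtlemma} handles this cleanly, and all remaining steps are formal.
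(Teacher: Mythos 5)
Your proof is correct, and the first half (membership of $f(a,b)$ in $Y^{H_5}$ and the intertwining relation via a lift of $\gamma$ to $H_5^+$) is exactly the paper's argument, just written out in more detail. The uniqueness step, however, takes a genuinely different route. You pass from $\SL(V)\times\SL(W)$-equivariance to the full $\GL(V)\times\GL(W)$ transformation law via Lemma~\ref{wtlemma}, which forces you to assume $Y$ homogeneous and to decompose $F$ into homogeneous components. The paper instead stays inside $\SL(V)\times\SL(W)$: since we are over an algebraically closed field, any $g=(g_V,g_W)\in\GL(V)\times\GL(W)$ factors as a pair of scalars times an element of $\SL(V)\times\SL(W)$, and the scalar part $(\lambda I,\mu I)$ sends $u(a,b)$ to $\lambda^2\mu\,u(a,b)=u(\lambda^2\mu a,\lambda^2\mu b)$, i.e.\ it can be absorbed into the Hesse parameters because $u$ is linear. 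Thus Lemma~\ref{lem:hessorb} already shows that every non-singular model is $\SL(V)\times\SL(W)$-equivalent to a Hesse model, and agreement of $F_1$ and $F_2$ on the non-singular locus follows with no homogeneity hypothesis; Zariski density via Theorem~\ref{thm:inv} finishes as in your write-up. Your route buys an explicit formula for $F(\phi)$ in terms of $f$ with the determinant weights visible (useful later in the paper), at the price of an extra hypothesis on $Y$ that is not in the statement; if you want to keep your approach in full generality you should note that $Y$ can always be decomposed into homogeneous pieces under the central torus $\Gm\times\Gm$ of $\GL(V)\times\GL(W)$, so the assumption is removable.
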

\begin{Proof}
Since $F$ is a covariant it is 
$\SL(V) \times \SL(W)$-equivariant (by definition) 
and therefore $H_5^+$-equivariant. So its restriction to 
$(\wedge^2 V \otimes W)^{H_5}$ takes values in $Y^{H_5}$ and this
restriction is $\Gamma$-equivariant.

If $F_1$ and $F_2$ restrict to the same discrete covariant $f$ then
by Lemma~\ref{lem:hessorb} they agree on all non-singular models.
By Theorem~\ref{thm:inv} the non-singular models are Zariski dense in 
$\wedge^2 V \otimes W$ and from this we deduce that $F_1=F_2$.
\end{Proof}

For any given $Y$ it is easy to compute the discrete covariants 
using invariant theory for the finite groups $H_5$ and $\Gamma$.
We say that a discrete covariant $f : \Aff^2 \to Y^{H_5}$ 
{\em is a covariant} if it arises from a 
covariant $F: \wedge^2 V \otimes W \to Y$ 
as described in Theorem~\ref{thm:dcov}. It is important to note 
that not every discrete covariant is a covariant. 
We give examples below.

We recall the character table of $H_p$ for $p$ an odd prime.
There are $p^2+p-1$ conjugacy classes with representatives 
$\zeta^i$ and $\sigma^j \tau^k$ for $i,j,k \in \Z/p\Z$ 
with $(j,k) \not= (0,0)$. There are $p^2$ one-dimensional characters 
indexed by $(r,s) \in (\Z/p\Z)^2$. The remaining $p-1$ irreducible
characters are conjugates of the Schr\"odinger representation.
These are indexed by $t \in (\Z/p\Z)^\times$.
\begin{equation*}
\begin{array}{c|cc}
& \zeta^i & \sigma^j \tau^k \\ \hline
\lambda_{r,s} & 1 & \zeta_p^{jr+ks} \\
\theta_t & p \zeta_p^{it} & 0 \\
\end{array}
\end{equation*}

The dual of $\theta_t$ is $\theta_{-t}$. From the character table we also
deduce
\begin{Lemma}
\label{heisenchars}
Let $t,t' \in (\Z/p\Z)^\times$. \\ \smallskip
(i) $\theta_t \otimes \theta_{t'} \isom \left\{ \begin{array}{ll}
p \theta_{t+t'} & \text{ if } t+t' \not\equiv 0 \pmod{p} \\
\bigoplus_{r,s} \lambda_{r,s} & \text{ if } t+t' \equiv 0 \pmod{p} 
\end{array} \right.$ \\ \smallskip
(ii) $\wedge^d \theta_t \isom \left\{ \begin{array}{ll}
\lambda_{0,0} \qquad & \text{ if } d=0 \text{ or } d=p  \\ 
 \frac{1}{p} \binom{p}{d} \theta_{dt} & \text{ if } 1 \le d \le p-1
\end{array} \right. $ 
\quad  \\
(iii) $S^d \theta_t \isom \left\{ \begin{array}{ll}
\frac{1}{p}  \binom{p+d-1}{d} \theta_{dt} & \text{ if } d \not\equiv 0 \pmod{p} \\ \lambda_{0,0} \oplus \frac{1}{p^2} ( \binom{p+d-1}{d}-1) 
\bigoplus_{r,s} \lambda_{r,s} & \text{ if } d \equiv 0 \pmod{p}. 
\end{array} \right.$    
\end{Lemma}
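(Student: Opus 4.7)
The plan is to prove all three parts by straightforward character theory, using the table displayed just before the lemma. For part (i) I would simply multiply characters: $\chi_{\theta_t \otimes \theta_{t'}}(\zeta^i) = p^2 \zeta_p^{i(t+t')}$ and $\chi_{\theta_t \otimes \theta_{t'}}(\sigma^j \tau^k) = 0$. If $t+t' \not\equiv 0 \pmod{p}$ this matches $p\theta_{t+t'}$ on the nose. If $t+t' \equiv 0 \pmod{p}$ it matches $\bigoplus_{r,s} \lambda_{r,s}$, using the standard identity $\sum_{(r,s)} \zeta_p^{jr+ks} = 0$ for $(j,k) \not= (0,0)$.

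For (ii) and (iii) the key observation is that $\zeta$ acts as the scalar $\zeta_p^t$ on $\theta_t$, hence as $\zeta_p^{dt}$ on $\wedge^d \theta_t$ and $S^d \theta_t$. Whenever $dt \not\equiv 0 \pmod{p}$ the central character is non-trivial, so only the $\theta_s$ can appear as constituents; matching central characters forces $s=dt$, and then the multiplicity is forced by a dimension count. This handles (ii) for $1 \le d \le p-1$ and (iii) for $d \not\equiv 0 \pmod{p}$, and incidentally proves a posteriori that $\binom{p}{d}/p$ and $\binom{p+d-1}{d}/p$ are integers in these ranges. The extreme cases $d=0,p$ of (ii) are handled by hand: $\wedge^0$ is trivial, and $\wedge^p \theta_t = \det \theta_t$ is trivial since $\det \theta(\sigma) = \zeta_p^{0+1+\cdots+(p-1)} = 1$ (using $p$ odd) and $\det \theta(\tau) = 1$ as the sign of a $p$-cycle.

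The main work is (iii) when $p \mid d$: here the central character is trivial, so $S^d \theta_t$ decomposes into one-dimensional constituents only, and I would compute each multiplicity $\langle \chi_{S^d \theta_t}, \chi_{\lambda_{r,s}} \rangle$ directly. The character value on $\zeta^i$ is $\binom{p+d-1}{d}$. For a non-central element $g = \sigma^j \tau^k$ one needs the eigenvalues of $\theta_t(g)$; since $g^p = 1$ these are $p$th roots of unity, and combined with $\chi_{\theta_t}(g) = 0$ together with the fact that the only $\Z_{\ge 0}$-relation $\sum m_i \zeta_p^i = 0$ with $\sum m_i = p$ is $m_0 = \cdots = m_{p-1} = 1$, the eigenvalues must be precisely $\{1, \zeta_p, \ldots, \zeta_p^{p-1}\}$. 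This eigenvalue identification is the only genuinely non-routine step. Then $\chi_{S^d \theta_t}(g) = h_d(1,\zeta_p,\ldots,\zeta_p^{p-1})$, and from the generating function
\[ \sum_{d \ge 0} h_d x^d = \prod_{i=0}^{p-1}\frac{1}{1-x\zeta_p^i} = \frac{1}{1-x^p} \]
(using $\prod_i(1-x\zeta_p^i)=1-x^p$) one reads off $h_d = 1$ if $p \mid d$ and $0$ otherwise; the companion identity $\prod_i(1+x\zeta_p^i)=1+x^p$ yields $e_d = 0$ for $1 \le d \le p-1$, giving an independent sanity check of (ii).

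Finally, I would substitute into the inner product over $H_p$, using $\sum_{(j,k) \in (\Z/p\Z)^2} \zeta_p^{-jr-ks} = p^2 \delta_{(r,s),(0,0)}$, to obtain
\[ \langle \chi_{S^d \theta_t}, \chi_{\lambda_{r,s}}\rangle = \frac{1}{p^2}\bigl(\tbinom{p+d-1}{d}-1\bigr) + \delta_{(r,s),(0,0)}, \]
which is exactly the claimed decomposition. The eigenvalue identification for non-central $g$ is the one place requiring a little care; everything else is routine character arithmetic.
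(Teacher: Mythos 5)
Your proof is correct, and it takes exactly the route the paper intends: the lemma is stated immediately after the character table of $H_p$ with the remark ``From the character table we also deduce,'' and no further proof is given. Your computation — central-character matching plus a dimension count for the $\theta_{dt}$ cases, the eigenvalue identification $\{1,\zeta_p,\ldots,\zeta_p^{p-1}\}$ for non-central elements via $\operatorname{Tr}=0$ and the minimal polynomial of $\zeta_p$, and the inner-product calculation for the $p\mid d$ case of (iii) — correctly fills in all the details the paper leaves to the reader.
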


By (\ref{thetavw}) 
the representations $W, V, V^*, W^*$ are equivalent to 
$\theta_t$ for $t=1,2,3,4$. In the examples at the end of this section 
we use Lemma~\ref{heisenchars} to compute the dimension 
of $Y^{H_5}$ and then find a basis by inspection.

The representation $\chi_1 : \Gamma \to \GL_2(K)$ 
defined by~(\ref{defchi}) works out as 
\begin{equation*}
\begin{array}{lrcl}
\chi_1(S) : & (a,b) & \mapsto & (\varphi a+b,a-\varphi b)/(\zeta_5^4-\zeta_5) \\
\chi_1(T) : & (a,b) & \mapsto & (\zeta_5^2 a,\zeta_5^3 b). 
\end{array} 
\end{equation*}
where $\varphi=1+\zeta_5+\zeta_5^4$.
To fix our notation for the other irreducible characters
we recall the character table for $\Gamma = \SL_2(\Z/5\Z)$. 
In the first row we list the sizes of the conjugacy classes. 
The same symbols are used to denote both a representation and its
character. We have written $\phibar=1-\varphi$.

\[ \begin{array}{c|@{\quad}ccccccccc} 
& 1 & 1 & 20 & 20 & 30 & 12 & 12 & 12 & 12 \\
&  I  & -I & ST  & -ST & S & T & -T & T^2 & -T^2 \\ \hline
\psi_1 & 1 & 1 & 1 & 1 & 1 & 1 & 1 & 1 & 1 \\ 
\psi_2 & 4 & 4 & 1 & 1 & 0 & -1 & -1 & -1 & -1 \\ 
\psi_3 & 5 & 5 & -1 & -1 & 1 & 0 & 0 & 0 & 0 \\ 
\psi_4 & 3 & 3 & 0 & 0 & -1 & \varphi & \varphi & \phibar & \phibar \\ 
\psi_5 & 3 & 3 & 0 & 0 & -1 & \phibar & \phibar & \varphi & \varphi \\ \hline
\chi_1 & 2 & -2 & -1 &  \quad  1 & 0 & -\varphi & \varphi & -\phibar & \phibar \\
\chi_2 & 2 & -2 & -1 & 1 & 0 & -\phibar & \phibar & -\varphi & \varphi \\
\chi_3 & 4 & -4 & 1 & -1 & 0 & -1 & 1 & -1 & 1 \\
\chi_4 & 6 & -6 & 0 & 0 & 0 & 1 & -1 & 1 & -1 \\
\end{array} \]

\medskip

The discrete covariants in the case $Y=K$ is the trivial representation
form the ring of {\em discrete invariants} $R = K[a,b]^\Gamma$. 
This ring was already studied by Klein.
We noted in \cite[Section 3]{g1hess} that $R$ is generated by 
\begin{equation}
\label{discinv}
\begin{aligned}
D & = ab(a^{10}-11a^5 b^5-b^{10}) \\ 
c_4 & = 
  a^{20} + 228 a^{15} b^5 + 494 a^{10} b^{10} - 228 a^5 b^{15} + b^{20} \\
c_6 & = 
 -a^{30} + 522 a^{25} b^5 + 10005 a^{20} b^{10} 
+ 10005 a^{10} b^{20} - 522a^5 b^{25} - b^{30}
\end{aligned}
\end{equation}
subject only to the relation $c_4^3-c_6^2 = 1728 D^5$.
The discrete invariants $c_4$ and $c_6$ are the restrictions of the
invariants $c_4$ and $c_6$ in Theorem~\ref{thm:inv}.
Our use of the same notation for both a 
covariant and its restriction to the Hesse family should not cause 
any confusion in view of the uniqueness statement in Theorem~\ref{thm:dcov}.

For an arbitrary representation $\pi : \Gamma \to \GL_m(K)$
the discrete covariants form an $R$-module $M_\pi$. We write
$M_\pi = \oplus_{d \ge 0} M_{\pi, d}$ for the grading by degree. 
For any given $\pi$ and $d$ it is easy to compute a basis
for $M_{\pi, d}$ by linear algebra.

\medskip

\begin{Lemma}
\label{freeprop}
Let $\pi:\Gamma \to \GL_m(K)$ be a representation. Then
\begin{enumerate}
\item $M_\pi$ is a free $K[D,c_4]$-module of rank $2m$. 
\item $M_\pi$ is a free $K[D,c_6]$-module of rank $3m$. 
\item $M_\pi$ is a free $K[c_4,c_6]$-module of rank $5m$. 
\end{enumerate}
Moreover if $M_\pi(r) \subset M_\pi$ is the direct sum of the 
graded pieces $M_{\pi,d}$ with $d \equiv r \pmod{5}$, then 
$M_\pi(r)$ is a free $K[c_4,c_6]$-module of rank $m$.
\end{Lemma}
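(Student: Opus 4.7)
The plan is to prove (i)--(iii) uniformly: first show that $K[a,b]$ is a free module over each candidate base ring $B\in\{K[D,c_4],K[D,c_6],K[c_4,c_6]\}$ by a Cohen--Macaulay argument, then pass to the $\Gamma$-invariant summand. I would identify $M_\pi$ with $(K[a,b]\otimes_K Z)^\Gamma$, where $Z$ carries $\pi$; since $|\Gamma|=120$ is invertible in $K$, the Reynolds operator exhibits $M_\pi$ as a direct summand of $K[a,b]\otimes Z$ over $R=K[a,b]^\Gamma$. For each $B$ I would verify that the two generators form a homogeneous system of parameters in $K[a,b]$: algebraic independence is clear from~(\ref{discinv}), and the common vanishing locus of each pair reduces to $\{0\}\subset\Aff^2$ because the map $(D,c_4,c_6):\Aff^2\to\Aff^3$ factors through the finite quotient $\Aff^2\to\Aff^2/\Gamma$ and the closed embedding cut out by $c_4^3-c_6^2=1728D^5$; any curve on which two of the three generators vanished would force the third to vanish there as well, contradicting finiteness. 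By the graded Hironaka lemma, $K[a,b]$, and hence $K[a,b]\otimes Z$, is free over $B$, so its direct summand $M_\pi$ is graded projective and therefore free.

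To compute the ranks I would use that $R$ itself is free over $B$, with bases $\{1,c_6\}$, $\{1,c_4,c_4^2\}$, $\{1,D,D^2,D^3,D^4\}$ of ranks $2$, $3$, $5$ dictated by the defining relation. Writing $L$ for the field of fractions of $R$, localisation at the generic point gives
\[
\rank_B(M_\pi)\;=\;[L:\mathrm{Frac}(B)]\cdot\dim_L(M_\pi\otimes_R L),
\]
and the second factor equals $m$ by Galois descent: localisation commutes with $\Gamma$-averaging, and since $K(a,b)/L$ is $\Gamma$-Galois the normal basis theorem yields $\dim_L(K(a,b)\otimes_K Z)^\Gamma=\dim_K Z=m$ for any representation $Z$. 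This produces the ranks $2m$, $3m$, $5m$ in parts (i)--(iii). For the refinement, note that $c_4,c_6$ both have degrees divisible by $5$, so each $M_\pi(r)$ is a $K[c_4,c_6]$-submodule and $M_\pi=\bigoplus_{r=0}^{4}M_\pi(r)$; each summand is a direct summand of a free graded module and is therefore free. Since $D$ has degree $12\equiv 2\pmod 5$, multiplication by $D$ is a $K[c_4,c_6]$-linear injection $M_\pi(r)\to M_\pi(r+2)$, injective because $M_\pi$ is torsion-free over $K[D,c_4]$ by part (i). As $\gcd(2,5)=1$, the cycle $0\to 2\to 4\to 1\to 3\to 0$ visits every residue, forcing the ranks of the five pieces to coincide; the total $5m$ then distributes as $m$ per residue class.

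The main obstacle is the generic-rank step: one must verify both that $R$ really is free of the claimed rank over each $B$ (which rests entirely on the explicit syzygy $c_4^3-c_6^2=1728D^5$) and that Galois descent correctly handles the semilinear $\Gamma$-action twisted by an arbitrary representation $Z$. Everything else---the Cohen--Macaulay freeness, the Reynolds splitting, and the final pigeonhole on ranks---is then essentially formal.
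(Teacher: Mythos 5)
Your proposal is correct and follows essentially the same route as the paper: the paper delegates the freeness to the Benson-style argument of \cite[Lemma 5.3]{g1hess} (which is exactly the hsop/Cohen--Macaulay/Reynolds-operator argument you spell out), computes the ranks identically via the normal basis theorem together with the degrees $[\K:K(D,c_4)]=2$, $[\K:K(D,c_6)]=3$, $[\K:K(c_4,c_6)]=5$ coming from the syzygy, and finishes the last statement with the same multiplication-by-$D$ pigeonhole on the residues mod $5$.
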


\begin{proof}
In \cite[Lemma 5.3]{g1hess} we showed that $M_{\chi_1}$ is a free
$K[c_4,c_6]$-module. Since the same method (recalled from \cite{Benson})
works in general it only remains to compute the ranks.
Let $\K = K(a,b)^\Gamma$ be the field of fractions of $R$. 
By the normal basis theorem the
$\K[\Gamma]$-module $K(a,b)$ is a copy of the regular representation.
So if $\Gamma$ acts on $Z = K^m$ via $\pi$ then 
\[ \K \otimes Z \isom ( K(a,b) \otimes Z )^\Gamma = 
\K \otimes M_\pi. \]
In particular $\dim_\K(\K \otimes M_\pi) =m $. Statements (i)-(iii) 
follow since
\begin{align*}
[\K : K(D,c_4) ] & =  2, & 
[\K : K(D,c_6) ] & =  3, &
[\K : K(c_4,c_6) ] & =  5. 
\end{align*}

Finally we observe that the $M_\pi(r)$ for $r \in \Z/5\Z$
are free $K[c_4,c_6]$-modules with ranks $m_r$ (say) adding up to $5m$. 
Multiplication by $D$ shows that $m_r \le m_{r+2}$ for all $r$. 
Therefore $m_0= \ldots = m_4 =m$ as required.
\end{proof}


The Hilbert series of $M_\pi$ can be computed using 
Molien's theorem:
\begin{equation}
\label{molien}
h(M_\pi,t) = \sum_{d=0}^\infty  (\dim M_{\pi,d}) t^d 
=  \frac{1}{|\Gamma|} \sum_{\gamma \in \Gamma} \frac{ \Tr \pi (\gamma) }
{1-\Tr \chi_1(\gamma) t +t^2 }.  
\end{equation}
For example taking $\pi=\chi_1$ we find
\begin{align*} 
h(M_{\chi_1},t) & =   \frac{t+t^{11}+t^{19}+t^{29}}{(1-t^{12})(1-t^{20})} \\ 
& =   
\frac{t+t^{11}+t^{19}+t^{21}+t^{29}+t^{39}}{(1-t^{12})(1-t^{30})} \\
& =   \frac{(t+t^{11})+(t^{13}+t^{23})+(t^{25}+t^{35})
+(t^{37}+t^{47})+(t^{19}+t^{29})}{(1-t^{20})(1-t^{30})}.
\end{align*}
The numerators of these three expression give the degrees of the
generators in each part of Lemma~\ref{freeprop}.

There are essentially two ways in which a discrete covariant 
can fail to be a covariant. The first is that the weights 
computed using~(\ref{wtformula}) might fail to be integers.
For example the discrete invariant $D$ has weights $(p,q)=(24/5,12/5)$ 
and so cannot be an invariant. (However Theorem~\ref{thm:inv} tells us
that $c_4$, $c_6$ and $\Delta = D^5$ are invariants.)
Likewise taking $Y=S^5 W$ the discrete covariant 
\[ (a,b) \mapsto 
ab \textstyle\sum w_0^5 - 5 b^2 \sum w_0^3 w_1 w_4 + 5 a^2 \sum w_0^3 w_2 w_3
-30 ab \prod w_0. \]
has weights $(p,q)=(4/5,-3/5)$ and so cannot be a covariant.
The second is that the discrete covariant $f$ might arise from a 
{\em fractional covariant} by which we mean an 
$\SL(V) \times \SL(W)$-equivariant rational map 
$F: \wedge^2V \otimes W \ratto Y$.
It can happen that $f$ is regular even when $F$ is not. 
For example decomposing $(S^{10}W)^{H_5}$ as a $\Gamma$-module 
we find it contains a copy of the trivial representation. So there is
a discrete covariant of degree $0$. But there are clearly no
covariants $\wedge^2 V \otimes W \to S^{10}W$ of degree $0$.

In~\cite{paperII} we prove that these are the only
two obstructions. More precisely we show
that if $f:\Aff^2 \to Y^{H_5}$ is an integer weight discrete covariant 
then $\Delta^k f$ is a covariant for some 
integer $k \ge 0$. Moreover we give a practical method for determining 
the least such $k$.

If $Y$ is homogeneous of degree $(r,s)$ and $Y^{H_5} \not= 0$ 
then the action of the centre of $H_5$ shows that 
$2r+s \equiv 0 \pmod{5}$. We see by~(\ref{wtformula}) that 
$p$ is an integer if and only if $q$ is an integer.
So the integer weight condition is just a congruence mod~$5$ on the
degree of a covariant. In particular Lemma~\ref{freeprop} 
shows that the $K[c_4,c_6]$-module 
of integer weight discrete covariants is a free module of rank 
$m= \dim Y^{H_5}$. 

In this article we are primarily concerned with the 
rational representations $Y$ in the following table. 
In each case Lemma~\ref{heisenchars} shows that $\dim Y^{H_5} = 2$ or $3$. 
We list a basis
for $Y^{H_5}$ (the sums are taken over all cyclic permutations of the
subscripts mod~$5$) followed by its character as a $\Gamma$-module.
In the final column we list the degrees of the generators for the 
$K[c_4,c_6]$-module of integer weight discrete covariants, as 
computed using Molien's theorem.

\begin{center} {\bf Table 4.6 } \end{center}
\[ \begin{array}{clcll}
Y & \multicolumn{1}{c}{\text{ basis for $Y^{H_5}$}} 
& \text{character} & \text{degrees} \\ \hline
\wedge^2 V \otimes W     & 
\sum(v_1 \wedge v_4) w_0,\,  \sum(v_2 \wedge v_3) w_0 & \chi_1 & 1,11 \\
V^* \otimes \wedge^2 W   &
\sum v^*_0 (w_1 \wedge w_4),\,  \sum v^*_0 (w_2 \wedge w_3)  
&  \chi_2 & 7,17 \\
V \otimes \wedge^2 W^*   & 
\sum v_0 (w^*_1 \wedge w^*_4),\,  \sum v_0 (w^*_2 \wedge w^*_3)
&  \chi_2 & 13,23 \\
\wedge^2 V^* \otimes W^* &
\sum(v^*_1 \wedge v^*_4) w^*_0,\,  \sum(v^*_2 \wedge v^*_3) w^*_0
&  \chi_1 & 19,29 \\
V^* \otimes S^2W         & 
\sum v^*_0 w_0^2,\,  \sum v^*_0 w_1w_4,\,  \sum v^*_0 w_2w_3 &  \psi_4 & 2,12,22 \\
S^2V^* \otimes W^*       &
\sum v^{*2}_0 w^*_0,\,  \sum v^*_1 v^*_4 w^*_0,\,  \sum v^*_2 v^*_3 w^*_0
&  \psi_5 & 14,24,34 \\
S^2V \otimes W           
&\sum v_0^2 w_0,\,  \sum v_1 v_4 w_0,\,  \sum v_2 v_3 w_0
&  \psi_5 & 6,16,26 \\
V \otimes S^2W^*         
&\sum v_0 w^{*2}_0,\,  \sum v_0 w^*_1w^*_4,\,  \sum v_0 w^*_2w^*_3  
&  \psi_4 & 18,28,38 
\end{array} \] 
Checking the conditions in \cite{paperII} it turns out that 
each of these discrete covariants is a covariant. In~\cite{g1hess} we 
gave an alternative proof in the cases $Y= \wedge^2 V \otimes W$ 
and $Y= \wedge^2 V^* \otimes W^*$ using evectants.
The explicit constructions in Section~\ref{sec:constr} also 
show that each of these covariants exists at least 
as a fractional covariant.

The discrete covariant of degree $2$ for $Y = V^* \otimes S^2 W$
is $P_2 = \sum v_i^* p_i$ where 
\begin{equation}
\label{eqn:pi}
  p_i = ab w_i^2 + b^2 w_{i-1} w_{i+1} - a^2 w_{i-2} w_{i+2} 
\end{equation}
and the discrete covariant of degree $6$ for $Y = S^2V \otimes W$ is
\[ Q_6 = \textstyle\sum  
   (5 a^3 b^3 v_0^2 + a(a^5-3b^5) v_1 v_4 - b(3 a^5+b^5) v_2 v_3)w_0. \]
Substituting $v_i = p_i$ in $Q_6$ gives a covariant
of degree $10$ for $Y = S^5 W$ which turns out to be (a scalar multiple
of) the secant variety covariant~(\ref{def:S10}).
This suggested to us the algorithm for computing $Q_6$ 
in \cite[Section 8]{g1inv} that is the key step in our algorithm 
for computing the invariants.

In the remainder of this article we are concerned with arithmetic
applications of the covariants in Table~4.6 and in algorithms
for evaluating them on (non-singular) genus one models.

\section{Families of $5$-congruent elliptic curves}
\label{sec:congr}

From now on $K$ will be a field of 
characteristic $0$ with algebraic closure $\Kbar$. 

\begin{Definition}
(i) Elliptic curves $E$ and $E'$ over $K$ are $n$-congruent
if  $E[n]$ and $E'[n]$ are isomorphic as Galois modules. \\
(ii) The modular curve $Y_E^{(r)}(n) = 
X_E^{(r)}(n) \setminus \{ \text{cusps} \}$ parametrises
the family of elliptic curves $n$-congruent to $E$ via an isomorphism
$\psi$ with $e_n(\psi S,\psi T) = e_n(S,T)^{r}$ for all $S,T \in E[n]$. 
\end{Definition}

The curves $X_E^{(r)}(n)$ depend only on the class of $r \in 
(\Z/n\Z)^\times$ modulo squares. In the cases $r=\pm 1$ we denote them
$X_E(n)$ and $X_E^{-}(n)$. Rubin and Silverberg \cite{RubSil1}, 
\cite{RubSil2}, \cite{Silverberg} computed formulae 
for the families of elliptic curves parametrised by
$Y_E(n)$ for $n=2,3,4,5$. In \cite{g1hess} we gave a 
new proof of their result and extended to $Y_E^{-}(n)$ for $n=3,4,5$. 
In the case $n=5$ this is not so interesting since $-1$ is a 
square mod $5$. In Theorem~\ref{thm:ye2} below we remedy this by giving a 
formula for the family of elliptic curves parametrised by $Y_E^{(2)}(5)$.

First we need some preliminaries on Heisenberg groups. Since we have
dropped our earlier assumption that $K$ is algebraically closed our point
of view is slightly different from that in Section~\ref{sec:heis}.

\begin{Definition}
A {\em Heisenberg group} is a 
Galois invariant subgroup $H \subset \SL_n(\Kbar)$ such that
\begin{enumerate}
\item $H$ is the inverse image of a subgroup 
$\Delta \subset \PGL_n(\Kbar)$ with $\Delta \isom (\Z/n\Z)^2$. 
\item Taking commutators in $H$ induces a non-degenerate pairing 
$\Delta \times \Delta \to \mu_n$. 
\end{enumerate}
\end{Definition}

Let $C \subset \PP^{n-1}$ be a genus one normal curve of degree $n$.
The Heisenberg group defined by $C$ is the group of 
all matrices in $\SL_n(\Kbar)$
that act on $C$ as translation by an $n$-torsion point of its Jacobian $E$.
In this case the commutator pairing is the Weil pairing
$e_n : E[n] \times E[n] \to \mu_n$. If $C$ is a curve of degree $n=5$ 
then there is another
Heisenberg group determined by $C$ coming instead from the
action of $E[5]$ on the space of quadrics defining $C$. 

\begin{Lemma}
Let $\phi \in \wedge^2 V \otimes W$ be non-singular and 
let $E = \Jac(C_\phi)$.
Then there are projective representations $\chi_V : E[5] \to \PGL(V)$
and $\chi_W : E[5] \to \PGL(W)$ such that
\begin{enumerate}
\item The action of $E[5]$ on $C_\phi \subset \PP(W^*)$ 
is given by $\chi_W$ and 
\item $(\chi_V(T), \chi_W(T)) \phi \propto \phi$ for all $T \in E[5]$.
\end{enumerate}
\end{Lemma}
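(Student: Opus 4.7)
The plan is to construct the two projective representations directly from the geometry of $C_\phi$ and its ambient space, and then deduce property (ii) from the uniqueness part of the Buchsbaum-Eisenbud structure theorem.

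First I would construct $\chi_W$. For $T \in E[5]$, translation $t_T$ acts on $C_\phi$ and preserves the linear equivalence class of the hyperplane section: if $D$ is the divisor defining the embedding then $t_T^* D - D$ is principal because its class $5 \cdot T \in \Jac(C_\phi) = E$ is trivial. Hence $t_T$ is the restriction of a projective linear automorphism of $\PP(W^*)$, i.e.\ comes from an element of $\GL(W)$ well defined up to scalar. This assignment $T \mapsto \chi_W(T)$ is a group homomorphism since translations compose, so it defines a projective representation $\chi_W : E[5] \to \PGL(W)$ satisfying (i) by construction.

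Next I would obtain $\chi_V$ as the induced action on the space of quadrics containing $C_\phi$. Because $\chi_W(T)$ preserves $C_\phi$, it preserves the homogeneous ideal and in particular its degree-$2$ part $H^0(\mathcal{I}_{C_\phi}(2))$, which has dimension $5(5-3)/2 = 5$. For non-singular $\phi$ the Pfaffian map $P_2(\phi) : V \to S^2 W$ is injective with image exactly this space of quadrics, so the action of $\chi_W(T)$ on $H^0(\mathcal{I}_{C_\phi}(2))$ transfers via $P_2(\phi)^{-1}$ to an action on $V$, giving $\chi_V(T) \in \PGL(V)$.

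For (ii) I would fix any lifts $g_W \in \GL(W)$ and $g_V \in \GL(V)$ of $\chi_W(T)$ and $\chi_V(T)$. The model $\phi' = (g_V,g_W)\phi$ is another $5 \times 5$ alternating matrix of linear forms on $\PP(W^*)$, and its $4 \times 4$ Pfaffians span the same $5$-dimensional space $H^0(\mathcal{I}_{C_\phi}(2))$ as those of $\phi$; the action of $g_V$ on $V$ was chosen precisely so that the identification $V \isom H^0(\mathcal{I}_{C_\phi}(2))$ supplied by $P_2(\phi)$ agrees with that supplied by $P_2(\phi')$. By the uniqueness part of the Buchsbaum-Eisenbud structure theorem \cite{BE1},\cite{BE2}, any two such alternating matrices differ by an overall scalar, so $\phi' = c \phi$ for some $c \in \Kbar^\times$, i.e.\ $(\chi_V(T),\chi_W(T)) \phi \propto \phi$.

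The main obstacle I anticipate is pinning down the precise form of Buchsbaum-Eisenbud uniqueness needed: namely, that once the identifications $V = H^0(\mathcal{I}_{C_\phi}(2))$ and $W = H^0(\mathcal{O}_{\PP^4}(1))$ are fixed, the skew-symmetric matrix $\phi \in \wedge^2 V \otimes W$ is determined up to a single global scalar. This should follow by reading off the uniqueness, up to scalar, of the minimal free resolution of the coordinate ring of $C_\phi$; alternatively one can verify it by noting that the only endomorphisms of $\phi$ commuting with the $\GL(V) \times \GL(W)$ action induced from $C_\phi$ are scalars, which is a routine computation once $\phi$ is put in a convenient normal form (for instance, the Hesse form of Lemma~\ref{lem:hessorb}).
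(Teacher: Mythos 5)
Your proof is correct, and it reaches the same pair of projective representations as the paper, but by a different route at the key step. The paper's own proof is essentially a citation: it defines $\chi_W$ by condition (i) and then invokes \cite[Lemma~7.6]{g1hess}, which says that any $g_W\in\GL(W)$ describing an automorphism of $C_\phi$ lifts to a $g_V\in\GL(V)$, unique up to sign, with $(g_V,g_W)\phi=\phi$ \emph{exactly}; condition (ii) then serves as the definition of $\chi_V$. You instead define $\chi_V$ intrinsically as the transfer of $S^2\chi_W(T)$ to $V$ through the identification $P_2(\phi):V\isom H^0(\mathcal{I}_{C_\phi}(2))$, and then verify (ii) from Buchsbaum--Eisenbud uniqueness. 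This works: the standard form of that uniqueness gives $\phi'=cM\phi M^T$ for some $M\in\GL(V)$ whenever $\phi$ and $\phi'$ have the same Pfaffian ideal, and the covariance of $P_2$ (degree $2$, weights $(1,0)$) shows $M$ acts on the Pfaffian vector by $\det(M)\,M^{-T}$; since your choice of $g_V$ forces the Pfaffian vectors of $\phi$ and $(g_V,g_W)\phi$ to agree up to one global scalar, $M$ must be scalar and $(g_V,g_W)\phi\propto\phi$. So the ``main obstacle'' you flag in your last paragraph is real but surmountable exactly as you suggest, and it is in effect a proof of the cited lemma up to scalars (the cited lemma's sharper ``unique up to sign with exact equality'' is not needed for the statement as given, though the paper uses that refinement elsewhere to put the Heisenberg groups inside $\SL$ rather than $\PGL$). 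What your approach buys is self-containedness; what the paper's buys is brevity and the stronger normalisation of $g_V$.
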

\begin{Proof}
If $g_W \in \GL(W)$ describes an automorphism of $C_\phi$
then by \cite[Lemma~7.6]{g1hess} there exists $g_V \in \GL(V)$,
unique up to sign, such that $(g_V,g_W) \phi = \phi$. 
So once we have used (i) to define $\chi_W$, 
condition (ii) uniquely determines $\chi_V$.
\end{Proof}

The projective representations $\chi_V$ and $\chi_W$ determine
Heisenberg groups
\begin{equation}
\label{def:H}
\begin{aligned}
H_1 & \subset \SL(W^*) & H_2 & \subset \SL(V^*) &
H_3 & \subset \SL(V) & H_4 & \subset \SL(W) 
\end{aligned}
\end{equation}
where the first of these is the Heisenberg group defined by $C_\phi$.
It follows by~(\ref{thetavw}) that 
the commutator pairing on $E[5]$ induced by $H_r$ is the $r$th 
power of the Weil pairing.

\begin{Theorem}
\label{thm:hps}
Let $\phi \in \wedge^2 V \otimes W$ be a non-singular 
genus one model determining
Heisenberg groups $H_1, \ldots, H_4$ as above.
Then the genus one normal curves with Heisenberg group $H_r$ are the
$C_{\phi'}$ for $\phi'$ a non-singular member of the pencil spanned
by $F_1(\phi)$ and $F_2(\phi)$ where $F_1$ and $F_2$ are a basis
for the $K[c_4,c_6]$-module of covariants $\wedge^2 V \otimes W \to Y$ and
\[ Y = \left\{ \begin{array}{ll} 
\wedge^2 V \otimes W & \text { if } r = 1 \\
V \otimes \wedge^2 W^* & \text { if } r = 2 \\
V^* \otimes \wedge^2 W & \text { if } r = 3 \\
\wedge^2 V^* \otimes W^* & \text { if } r = 4.
\end{array} \right. \] 
\end{Theorem}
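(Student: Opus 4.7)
The plan is to reduce to the Hesse family via Lemma~\ref{lem:hessorb}, then identify the pencil with the two-dimensional space of $H_5$-invariants in $Y$.

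\textbf{Reduction to Hesse models.} By Lemma~\ref{lem:hessorb} every non-singular $\phi$ is $\GL(V) \times \GL(W)$-equivalent to some $u(a,b)$. Both sides of the claimed equality transform equivariantly under $\GL(V) \times \GL(W)$: the pencil $\langle F_1(\phi), F_2(\phi) \rangle$ by Lemma~\ref{wtlemma}, and the Heisenberg groups~(\ref{def:H}) by simultaneous conjugation. So it suffices to prove the theorem for $\phi = u(a,b)$.

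\textbf{Characterizing $H_r$-invariant models.} For a Hesse model the Heisenberg groups $H_3, H_4$ are, by~(\ref{thetavw}), the images of $\theta_V, \theta_W$, while $H_1, H_2$ are the images of the contragredients $\theta_W^*, \theta_V^*$. Each target $Y$ in the theorem accordingly inherits an action of $H_5$, and Table~4.6 records that $\dim Y^{H_5} = 2$. The key claim is: a non-singular $\phi' \in Y$ has Heisenberg group equal to $H_r$ if and only if $\phi' \in Y^{H_5}$. The ``if'' direction follows because a fixed vector in $Y^{H_5}$ is by construction invariant up to scalar under the $H_r$-action on the relevant $\SL$-factor, so by the uniqueness statement in \cite[Lemma~7.6]{g1hess} together with Theorem~\ref{thm:exheis}, $H_r$ is contained in the Heisenberg group of $C_{\phi'}$. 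The ``only if'' direction holds because the Heisenberg group of any smooth genus one normal quintic has order $125 = |H_r|$, so containment forces equality.

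\textbf{Spanning.} Theorem~\ref{thm:dcov} gives $F_i(u(a,b)) \in Y^{H_5}$. To check that these two elements span $Y^{H_5}$ for non-singular $\phi$, consider $F_1 \wedge F_2$ as a polynomial map to the one-dimensional space $\wedge^2 Y^{H_5}$. Since $\Gamma = \SL_2(\Z/5\Z)$ is perfect, it has no non-trivial one-dimensional characters, so $\Gamma$ acts trivially on $\wedge^2 Y^{H_5}$; hence $F_1 \wedge F_2$ is a discrete invariant of degree $\deg F_1 + \deg F_2 \in \{12, 24, 36, 48\}$ (from Table~4.6). As $D, c_4, c_6$ have degrees $12, 20, 30$, the only discrete invariants of these four degrees are scalar multiples of $D, D^2, D^3, D^4$; the scalar is non-zero because $F_1, F_2$ form a free $K[c_4,c_6]$-basis of $M_\pi$. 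So $F_1(\phi), F_2(\phi)$ are linearly independent precisely when $D(\phi) \not= 0$, i.e.\ whenever $\phi$ is non-singular.

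\textbf{Main obstacle.} The heart of the argument is the characterization in the middle step: one must confirm that the induced $H_5$-action on each of the four target spaces really does come from the geometric Heisenberg group $H_r$ acting in the correct $\SL$-factor, and not from some projective conjugate or the wrong one of its duals. This requires carefully matching the Schr\"odinger representation on each ambient factor with its contragredient on the opposite factor, using Theorem~\ref{thm:exheis} and~\cite[Lemma~7.6]{g1hess}, and tracking how the $r$th power of the Weil pairing arises in each case.
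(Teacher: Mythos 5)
Your overall architecture matches the paper's: reduce to a Hesse model $\phi=u(a,b)$ by Lemma~\ref{lem:hessorb} and covariance, identify the relevant pencil with $Y^{H_5}$, and check that $F_1(\phi)$, $F_2(\phi)$ span it. Your spanning argument is in fact nicer than what the paper does: the paper verifies linear independence by explicitly computing the discrete covariants (e.g.\ $f_7g_{17}-g_7f_{17}=-24D^2$ in Remark~\ref{rem:indep}), whereas your observation that $F_1\wedge F_2$ is a discrete invariant of degree $12$, $24$, $36$ or $48$ (since $\Gamma$ is perfect, $\det\pi$ is trivial), hence a scalar multiple of $D$, $D^2$, $D^3$ or $D^4$, with nonzero scalar by freeness of the module, gives the non-degeneracy uniformly in $Y$. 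That is close in spirit to the generalisation the paper defers to \cite{paperII}.

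The gap is in your ``characterizing'' step, and it is the half of the theorem that carries the content. The statement asserts that the genus one normal curves with Heisenberg group $H_r$ \emph{are} the $C_{\phi'}$ for $\phi'$ in the pencil; your argument only establishes one inclusion. The order-$125$ count upgrades ``the Heisenberg group of $C_{\phi'}$ contains $H_r$'' to ``equals $H_r$'' for a curve already known to come from $Y^{H_5}$, but it says nothing about an arbitrary genus one normal quintic $C$ in the relevant $\PP^4$ whose Heisenberg group is $H_r$. For such a $C$ one must show that its five-dimensional space of defining quadrics is an irreducible $H_r$-module isomorphic to the appropriate conjugate of the Schr\"odinger representation, that the resulting Pfaffian model can be chosen $H_5$-invariant, and that the space of such invariant models is exactly the two-dimensional Hesse pencil identified with $Y^{H_5}$ in Table~4.6. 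The paper supplies precisely this via \cite[Lemma 7.5]{g1hess}, which you neither cite nor reprove; and the coordinate-matching issue you defer to your ``main obstacle'' paragraph is part of the same missing step rather than a separable afterthought. Until that surjectivity is in place, you have shown only that the pencil consists of curves with Heisenberg group $H_r$, not that it exhausts them.
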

\begin{Proof}
This generalises \cite[Theorem 8.2]{g1hess} where we treated the case
$r=1$. 

For the proof we may assume that $K$ is algebraically closed.
By Lemma~\ref{lem:hessorb} and
the covariance of $F_1$ and $F_2$ we may assume that
$\phi = u(a,b)$ is a Hesse model. Then each $H_r$ is the standard
Heisenberg group generated by the matrices~(\ref{heismats}). By 
\cite[Lemma 7.5]{g1hess} the genus one normal curves with this 
Heisenberg group are the $C_{\phi'}$ for $\phi'$ a non-singular Hesse
model. Splitting into the cases $r=1,2,3,4$ we checked by computing the
discrete covariants (see Remark~\ref{rem:indep} below for the case $r=3$) 
that $F_1(\phi)$ and $F_2(\phi)$ are linearly independent. 
They therefore span the space of Hesse models.
\end{Proof}

In \cite{g1hess} we studied the cases $r = 1,4$. 
We now work out explicit formulae in the case $r =3$. According to the
table at the end of Section~\ref{sec:discov} the $K[c_4,c_6]$-module 
of covariants for 
$Y = V^* \otimes \wedge^2 W$ is generated by covariants 
$\Psi_7$ and $\Psi_{17}$ of degrees $7$ and $17$. The corresponding 
discrete covariants are
\begin{equation}
\label{def:Psi}
(a,b) \mapsto f_d(a,b) \sum v_0^* (w_1 \wedge w_4) 
+ g_d(a,b) \sum v_0^* (w_2 \wedge w_3) 
\end{equation}
where
\begin{align*}
f_7(a,b)&  =  b^2 (7 a^5  - b^5), &  f_{17}(a,b) & = 
    b^2( 17 a^{15} + 187 a^{10} b^5 + 119 a^5 b^{10} + b^{15}), \\
g_7(a,b)&  =  a^2(a^5 + 7 b^5),  &  g_{17}(a,b) & = 
  -a^2(a^{15} - 119 a^{10} b^5 + 187 a^5 b^{10} - 17 b^{15}).
\end{align*}

\begin{Remark}
\label{rem:indep}
Direct calculation shows that 
$f_{7} g_{17} - g_{7} f_{17} =  -24 D^2$. We deduce that if $\phi$
is non-singular then $\Psi_7(\phi)$ and $\Psi_{17}(\phi)$ are linearly 
independent. In \cite{paperII} we generalise this to 
arbitrary $Y$.
\end{Remark}

We recall that the ring of discrete invariants $K[a,b]^\Gamma$ is 
generated by the polynomials $D$, $c_4$ and $c_6$ in (\ref{discinv}).
\begin{Lemma}
\label{lem:hp}
There are polynomials $\D(\la,\mu)$, $\cc_4(\la,\mu)$ and $\cc_6(\la,\mu)$
with coefficients in $K[c_4,c_6]$ such that
\begin{align*}
 \D(\la,\mu) &= 27 \cdot 
D( \la f_7 + \mu f_{17}, \la g_7 + \mu g_{17} )/D(a,b)^2 \\
 \cc_4(\la,\mu) &= 54^2 \cdot 
c_4( \la f_7 + \mu f_{17}, \la g_7 + \mu g_{17} ) \\ 
 \cc_6(\la,\mu) &= 54^3 \cdot 
c_6( \la f_7 + \mu f_{17}, \la g_7 + \mu g_{17} ) 
\end{align*}
\end{Lemma}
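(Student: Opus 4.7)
The plan is to apply Theorem~\ref{thm:inv} to the target space of $\Psi_7$ and $\Psi_{17}$. As an $\SL(V)\times\SL(W)$-module, $V^*\otimes\wedge^2 W\cong\wedge^2 W\otimes V^*$ is itself a space of genus one models (with $W$ and $V^*$ playing the roles of $V$ and $W$), so Theorem~\ref{thm:inv} supplies invariants $c_4,c_6$ on it whose composition with the equivariant polynomial map $\phi\mapsto \la\Psi_7(\phi)+\mu\Psi_{17}(\phi)$ is, for each fixed $(\la,\mu)$, an $\SL(V)\times\SL(W)$-invariant on $\wedge^2 V\otimes W$; a second application of Theorem~\ref{thm:inv} places these in $K[c_4(\phi),c_6(\phi)][\la,\mu]$. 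Restricting to the Hesse family $\phi=u(a,b)$: by~\eqref{def:Psi} the image becomes the Hesse-form element of $V^*\otimes\wedge^2 W$ with parameters $(a',b'):=(\la f_7+\mu f_{17},\la g_7+\mu g_{17})$, and its invariants evaluate, after the scalings $54^2$ and $54^3$, to the Klein polynomials $c_4(a',b')$ and $c_6(a',b')$. This yields the formulas for $\cc_4$ and $\cc_6$.

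For $\D$, combining the above with $\Delta=(c_4^3-c_6^2)/1728$ gives $D(a',b')^5=\Delta(a',b')\in K[c_4(a,b),c_6(a,b)][\la,\mu]$. The key intermediate step is $D(a,b)^2\mid D(a',b')$ in $K[a,b,\la,\mu]$. Over $\Kbar$, $D(a,b)$ factors into twelve distinct linear forms corresponding to the cusps of $X(5)$, which $\Gamma$ permutes transitively, so it suffices to verify the divisibility at one factor. Taking $L=a$: inspection of~\eqref{def:Psi} shows $a^2\mid g_7$ and $a^2\mid g_{17}$, hence $a^2\mid b'$, while $a'|_{a=0}=-\la b^7+\mu b^{17}$ is generically nonzero, so $v_a(D(a',b'))=v_a(a')+v_a(b')+v_a(a'^{10}-11a'^5b'^5-b'^{10})=0+2+0=2$.

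Granted the divisibility, the quotient $D(a',b')/D(a,b)^2$ is $\chi_1$-invariant in $(a,b)$: Klein's polynomial $D$ is invariant under both $\chi_1$ and $\chi_2$, since $\chi_2=\chi_1\circ\phi$ for the non-trivial outer automorphism $\phi$ of $\Gamma$ (so any $\chi_1$-invariant polynomial is automatically $\chi_2$-invariant), and $(a',b')$ transforms under $\chi_1(\gamma)$ via $\chi_2(\gamma)$. Hence each $(\la,\mu)$-coefficient lies in $K[a,b]^\Gamma=K[D,c_4,c_6]/(c_4^3-c_6^2-1728D^5)$. Finally, a mod-$5$ degree argument rules out any $D$-contribution: the coefficient of $\la^{12-j}\mu^j$ is homogeneous of degree $60+10j$ in $(a,b)$, and a monomial $D^kc_4^pc_6^q$ would require $12k+20p+30q=60+10j$, which modulo $5$ forces $k\equiv 0\pmod 5$, excluding $1\le k\le 4$. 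The main obstacle is the divisibility statement $D(a,b)^2\mid D(a',b')$; once secured, the conclusion (with the factor $27$ as a final normalisation) follows formally.
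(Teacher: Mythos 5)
Your proposal is correct, but it is considerably more elaborate than the paper's proof, which disposes of the lemma in two sentences: each coefficient with respect to $\la,\mu$ is a discrete invariant, i.e.\ a homogeneous element of $R=K[a,b]^\Gamma=K[D,c_4,c_6]$, and by Lemma~\ref{freeprop} the part of $R$ in degrees $\equiv r \pmod 5$ is the free $K[c_4,c_6]$-module generated by $D^{k_r}$ where $2k_r\equiv r\pmod 5$. Applied to $c_4(a',b')$ and $c_6(a',b')$, whose coefficients have degrees $\equiv 0\pmod 5$, this gives membership in $K[c_4,c_6]$ at once; applied to $D(a',b')$, whose $\la^{12-j}\mu^j$-coefficient is homogeneous of degree $84+10j\equiv 4\pmod 5$, it forces every coefficient into $D^2K[c_4,c_6]$ --- so the divisibility by $D(a,b)^2$ that you single out as ``the main obstacle'' falls out of exactly the mod-$5$ bookkeeping you already perform at the end, with no separate argument needed. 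Your two substitutes --- the double application of Theorem~\ref{thm:inv}, viewing $V^*\otimes\wedge^2 W$ as itself a space of genus one models, to handle $\cc_4,\cc_6$; and the valuation computation at the cusp $a=0$ combined with transitivity of $\Gamma$ on the twelve cusps to get the divisibility --- are both valid, and they buy a more explicit, self-contained verification (your global route in effect proves Lemma~\ref{lem:hps} along the way, reversing the paper's order), at the cost of length. The one point you should tighten is the claim that any $\chi_1$-invariant polynomial is automatically $\chi_2$-invariant: $\chi_2$ and $\chi_1\circ\phi$ have equal characters and agree on $T$, hence are conjugate by a diagonal matrix, but you still need that this matrix normalises the image (equivalently, that the two homomorphisms agree on $S$ as well) before concluding $\chi_2(\Gamma)=\chi_1(\Gamma)$ as subgroups of $\GL_2(K)$; the paper needs, and silently assumes, the same fact when it declares the coefficients to be discrete invariants, so this is a shared elision rather than a gap particular to your argument.
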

\begin{Proof} The coefficients are 
discrete invariants of degree a multiple of $5$. We can then
therefore write them as polynomials in $c_4$ and $c_6$. 
(The factors $27$, $54^2$, $54^3$ are included to make $\D$, $\cc_4$, 
$\cc_6$ primitive polynomials in $\Z[c_4,c_6,\la,\mu]$.)
\end{Proof}

The polynomials $\D(\la,\mu)$, $\cc_4(\la,\mu)$ and $\cc_6(\la,\mu)$
are easily computed from the description in Lemma~\ref{lem:hp}. We find
\begin{align*}
\D(& \la, \mu) =  
-(125 c_4^3 + 64 c_6^2) \la^{12} - 1620 c_4^2 c_6 \la^{11} \mu 
- 66 (25 c_4^4 + 56 c_4 c_6^2) \la^{10} \mu^2 \\ & -
    220(11 c_4^3 c_6 + 16 c_6^3) \la^9 \mu^3 
 +  1485(5 c_4^5 + 4 c_4^2 c_6^2) \la^8 \mu^4 + 
   792 (53 c_4^4 c_6 + 28 c_4 c_6^3) \la^7 \mu^5 
\\ & + 660 (9 c_4^6 + 164 c_4^3 c_6^2 + 16 c_6^4) \la^6 \mu^6  + 
  2376(19 c_4^5 c_6 + 44 c_4^2 c_6^3) \la^5 \mu^7 \\ & + 495 (27 c_4^7 + 
   104 c_4^4 c_6^2 +  112 c_4 c_6^4) \la^4 \mu^8 
   +  220 (81 c_4^6 c_6 + 136 c_4^3 c_6^3 +  80 c_6^5) \la^3 \mu^9 
  \\ &  - 594 (9 c_4^8 - 32 c_4^5 c_6^2 - 16 c_4^2 c_6^4) \la^2 \mu^{10} - 
    60(135 c_4^7 c_6 - 328 c_4^4 c_6^3 + 112 c_4 c_6^5) \la \mu^{11} \\ &
  - (729 c_4^9 + 108 c_4^6 c_6^2 - 2896 c_4^3 c_6^4 + 1600 c_6^6) \mu^{12},
\end{align*}
\[ \cc_4 (\lambda,\mu) =  
\frac{-1}{11^2 \cdot 12^2} \left| \begin{matrix} \smallskip
\frac{\partial^2 \D}{\partial \lambda^2} & 
\frac{\partial^2 \D}{\partial \lambda \partial \mu} \\
\frac{\partial^2 \D}{\partial \lambda \partial \mu} &
\frac{\partial^2 \D}{\partial \mu^2} 
\end{matrix} \right| 
\quad \text{ and } \quad 
 \cc_6(\lambda,\mu)  =  
\frac{-1}{12 \cdot 20} \left| \begin{matrix} \smallskip
\frac{\partial \D}{\partial \lambda} & 
\frac{\partial \D}{\partial \mu} \\
\frac{\partial \cc_4}{\partial \lambda} &
\frac{\partial \cc_4}{\partial \mu}
\end{matrix} \right|. \]
These polynomials satisfy the relation
\begin{equation*}
\label{syz}
 \cc_4(\lambda,\mu)^3- \cc_6(\lambda,\mu)^2 
= (c_4^3-c_6^2)^2 \, \D(\lambda,\mu)^5. 
\end{equation*}
We have contributed them to Magma \cite{magma} as
{\tt HessePolynomials(5,2,[c4,c6])}.

\begin{Lemma}
\label{lem:hps}
The $K[c_4,c_6]$-module of covariants $\wedge^2 V \otimes W 
\to V^* \otimes \wedge^2 W$ is generated by covariants
$\Psi_{7}$ and $\Psi_{17}$ satisfying
\begin{align*}
c_4( \la \Psi_7 + \mu \Psi_{17} ) &= \cc_4(\la,\mu)/54^2 \\
c_6( \la \Psi_7 + \mu \Psi_{17} ) &= \cc_6(\la,\mu)/54^3. 
\end{align*}
\end{Lemma}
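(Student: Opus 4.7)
The statement splits into a generation assertion and an identity for the invariants of the pencil.

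\emph{Generation.} Apply Lemma~\ref{freeprop}(iv) with $\pi=\chi_2$: the integer-weight submodule $M_{\chi_2}(2)$ --- the only residue class mod $5$ supporting covariants, since $Y=V^*\otimes\wedge^2 W$ has degree $(-1,2)$ and Lemma~\ref{wtlemma} then forces $\deg F\equiv 2\pmod{5}$ --- is a free $K[c_4,c_6]$-module of rank $m=\dim Y^{H_5}=2$. A Molien-series computation via (\ref{molien}) places the generator degrees at $7$ and $17$, as recorded in Table~4.6. Since $\Psi_7$ and $\Psi_{17}$ from (\ref{def:Psi}) are covariants of these degrees, and are linearly independent by Remark~\ref{rem:indep}, they form a basis.

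\emph{Identity for $c_4$ and $c_6$.} By the uniqueness in Theorem~\ref{thm:dcov} it suffices to check the identity after restriction to the Hesse family. Using (\ref{def:Psi}),
\[
(\la\Psi_7+\mu\Psi_{17})(u(a,b))=A\,\textstyle\sum v_0^*(w_1\wedge w_4)+B\,\sum v_0^*(w_2\wedge w_3),
\]
where $A=\la f_7+\mu f_{17}$ and $B=\la g_7+\mu g_{17}$. The right-hand side is a Hesse-type element of $V^*\otimes\wedge^2 W$ with parameters $(A,B)$. Since $V^*\otimes\wedge^2 W$ is itself a $50$-dimensional $\SL(V)\times\SL(W)$-representation carrying the same Heisenberg and $\Gamma$-structure on its $H_5$-fixed subspace as does $\wedge^2 V\otimes W$, its $c_4$ and $c_6$ invariants restrict to the Hesse-type subspace via the same polynomials (\ref{discinv}). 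Hence the $c_4$ invariant of this pencil element equals $c_4(A,B)$, which by Lemma~\ref{lem:hp} equals $\cc_4(\la,\mu)/54^2$; the $c_6$ case is identical.

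The main obstacle is the parenthetical intermediate claim: that the invariant polynomials $c_4,c_6$ on $V^*\otimes\wedge^2 W$ evaluate on Hesse-type elements $u'(A,B)$ to the same polynomials (\ref{discinv}) in $(A,B)$ as on $\wedge^2 V\otimes W$. One argues geometrically using Theorem~\ref{thm:hps}: a non-singular $\phi'=u'(A,B)\in V^*\otimes\wedge^2 W$ defines a genus one normal curve in $\PP(V)$ whose Jacobian is intrinsically defined, and the proof of Theorem~\ref{thm:inv} can be run in this $V\leftrightarrow W$-swapped setting to identify the Jacobian's invariants as (\ref{discinv}) evaluated at $(A,B)$. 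Alternatively one exhibits an $\SL(V)\times\SL(W)$-compatible isomorphism between the two Hesse families --- both two-parameter $\Gamma$-equivariant families that by construction share the same discrete invariants.
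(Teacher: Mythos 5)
Your argument follows the same route as the paper's, whose entire proof is: by Lemma~\ref{lem:hessorb} and the covariance of $\Psi_7,\Psi_{17}$ it suffices to check the identities on the Hesse family, where they hold by the very definition of $\cc_4$ and $\cc_6$ in Lemma~\ref{lem:hp}. (The generation statement is already recorded in Table~4.6 and in the sentence introducing~(\ref{def:Psi}); your derivation of it from Lemma~\ref{freeprop} and Molien's theorem is the intended one, granting --- as the paper does, via the appeal to \cite{paperII} --- that the discrete covariants of degrees $7$ and $17$ lift to genuine covariants.) The one point to correct is that your ``main obstacle'' is not an obstacle, and neither of your proposed resolutions is needed. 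The invariants $c_4,c_6$ are universal polynomials in the $50$ coefficients of a $5\times 5$ alternating matrix of linear forms; they do not see which abstract tensor space that matrix is regarded as an element of. Under the tautological identification $V^*\otimes\wedge^2 W\cong\wedge^2 W\otimes V^*$, the element $A\sum v_0^*(w_1\wedge w_4)+B\sum v_0^*(w_2\wedge w_3)$ \emph{is} the Hesse model $u(A,B)$, with $w_i$ indexing the matrix and $v_i^*$ serving as the linear forms; hence its invariants are the polynomials~(\ref{discinv}) evaluated at $(A,B)$. This is precisely the remark after~(\ref{discinv}) that the discrete invariants are the restrictions of the invariants, applied in the swapped copy of the model space. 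No geometric detour through Jacobians or a re-run of the proof of Theorem~\ref{thm:inv} is required; your second alternative (the compatible identification of the two Hesse families) is the correct, and essentially tautological, observation.
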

\begin{Proof} By Lemma~\ref{lem:hessorb} and the covariance of 
$\Psi_{7}$ and $\Psi_{17}$ it suffices 
to check these identities on the Hesse family. But in that case 
we are done by the definitions of $\cc_4$ and $\cc_6$ in Lemma~\ref{lem:hp}.
\end{Proof}

We say that elliptic curves $E$ and $E'$ are {\em indirectly $5$-congruent}
if there is an isomorphism of Galois modules $\psi : E[5] \isom E'[5]$ 
with $e_5(\psi S,\psi T) = e_5(S,T)^r$ for some $r \in \{2,3\}$.
In the notation introduced at the start of this section the
elliptic curves indirectly $5$-congruent to $E$ are parametrised by
$Y_E^{(2)}(5)$.

\begin{Theorem} 
\label{thm:ye2}
Let $E$ be an elliptic curve over $K$ with Weierstrass
equation
\[ y^2 = x^3 - 27 c_4 x - 54 c_6.\]
Then the family of elliptic curves parametrised by $Y_E^{(2)}(5)$ is 
\[ E_{\lambda,\mu} : \quad 
  y^2 = x^3 - 12 \cc_4(\lambda,\mu) x - 16 \cc_6(\lambda,\mu)\]
where the coefficients of
$\cc_4(\lambda,\mu)$ and $\cc_6(\lambda,\mu)$ 
are evaluated at $c_4,c_6 \in K$. 
\end{Theorem}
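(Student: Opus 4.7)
The plan is to derive Theorem~\ref{thm:ye2} as an arithmetic corollary of Theorem~\ref{thm:hps} in the case $r = 3$, converting the Heisenberg picture into a statement about $5$-congruences and then reading off the Weierstrass equation via Theorem~\ref{thm:inv} and Lemma~\ref{lem:hps}. The crux is to note that $3 \equiv 2 \cdot 2^2 \pmod{5}$, so $2$ and $3$ represent the same class in $(\Z/5\Z)^\times$ modulo squares, and consequently $Y_E^{(2)}(5) = Y_E^{(3)}(5)$. It therefore suffices to parametrise the elliptic curves $E'$ admitting a Galois-equivariant isomorphism $\psi \colon E[5] \to E'[5]$ with $e_5^{E'}(\psi S, \psi T) = e_5^E(S,T)^3$.

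First, choose any non-singular $\phi \in \wedge^2 V \otimes W$ whose Jacobian is $E$; by Theorem~\ref{thm:inv}(ii) this just requires $c_4(\phi) = c_4$ and $c_6(\phi) = c_6$. Applying Theorem~\ref{thm:hps} with $r = 3$ and $Y = V^* \otimes \wedge^2 W$, the non-singular members of the pencil $\phi' = \lambda \Psi_7(\phi) + \mu \Psi_{17}(\phi)$ are exactly the genus one normal curves of degree $5$ whose defining Heisenberg group is $H_3$. Because the commutator pairing on $H_3$ is simultaneously $e_5^E$ raised to the $3$rd power (as remarked after~(\ref{def:H})) and the Weil pairing on $\Jac(C_{\phi'})[5]$ (since $H_3$ is realised as the Heisenberg group defined by $C_{\phi'}$), each such $\phi'$ equips $E_{\lambda,\mu} := \Jac(C_{\phi'})$ with precisely the kind of $5$-congruence with $E$ that defines a point of $Y_E^{(3)}(5) = Y_E^{(2)}(5)$.

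Finally, Theorem~\ref{thm:inv}(ii) gives the Weierstrass equation $y^2 = x^3 - 27 c_4(\phi') x - 54 c_6(\phi')$. Substituting $c_4(\phi') = \cc_4(\lambda,\mu)/54^2$ and $c_6(\phi') = \cc_6(\lambda,\mu)/54^3$ from Lemma~\ref{lem:hps}, and then applying the isomorphism $(x,y) = (X/36, Y/216)$ (i.e.\ the rescaling with $k = 6$, so that $27 k^4/54^2 = 12$ and $54 k^6/54^3 = 16$), puts the equation in the asserted form $Y^2 = X^3 - 12 \cc_4(\lambda,\mu) X - 16 \cc_6(\lambda,\mu)$. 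I expect the main subtlety to be verifying that the parametrisation genuinely surjects onto $Y_E^{(2)}(5)$ and that cusps correspond precisely to zeros of $\D(\lambda,\mu)$; surjectivity is essentially packaged into the proof of Theorem~\ref{thm:hps} via Lemma~\ref{lem:hessorb}, since every non-singular genus one normal curve of degree $5$ with Heisenberg group $H_3$ is $\GL(V) \times \GL(W)$-conjugate to a Hesse model, and the identification of cusps then follows from the syzygy $\cc_4^3 - \cc_6^2 = (c_4^3 - c_6^2)^2 \, \D(\lambda,\mu)^5$, which forces the discriminant of $E_{\lambda,\mu}$ to vanish exactly when $\D(\lambda,\mu) = 0$.
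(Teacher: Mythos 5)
Your overall route is the paper's: apply Theorem~\ref{thm:hps} with $r=3$, use the fact that the commutator pairing on $H_3$ is the cube of the Weil pairing on $E[5]$ while simultaneously being the Weil pairing on $\Jac(C_{\phi'})[5]$ to get the indirect $5$-congruence, and read off the Weierstrass equation from Theorem~\ref{thm:inv} and Lemma~\ref{lem:hps} (your rescaling by $u=6$ is correct). The identification $Y_E^{(2)}(5)=Y_E^{(3)}(5)$ via $3\equiv 2\cdot 2^2\pmod 5$ is also fine and is already implicit in the paper's definition of ``indirectly $5$-congruent''.

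The gap is in surjectivity, and your proposed fix does not close it. Lemma~\ref{lem:hessorb} and Theorem~\ref{thm:hps} tell you that the genus one normal curves whose Heisenberg group is the specific subgroup $H_3\subset\SL(V)$ are exactly the $C_{\phi'}$ in the pencil; they say nothing about which elliptic curves occur as Jacobians of such curves. To show that every $E'$ indirectly $5$-congruent to $E$ is some $E_{\lambda,\mu}$, you must produce, over $K$, a genus one normal curve with Jacobian $E'$ and Heisenberg group equal to that particular $H_3$ --- an existence/descent statement, not a classification statement over $\Kbar$. This is also where your choice of $\phi$ matters: if you take an arbitrary $\phi$ with invariants $c_4,c_6$, then $C_\phi$ represents some class $\xi\in\ker(\Ob_5)$, and by Theorem~\ref{thm:theta} the theta group $\Theta_3$ is the twist of $\Theta_{E'}$ by $\psi_*(\xi)$, so the existence of a curve with Jacobian $E'$ and theta group $\Theta_3$ is not immediate. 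The paper avoids this by taking $\phi$ to be the image of $E$ itself under $|5.0_E|$ (so $\xi=0$ and $\Theta_3$ is the trivial twist of $\Theta_{E'}$) and then invoking \cite[Theorem 5.2]{paperI} to produce the required curve $C'$ with Jacobian $E'$ and Heisenberg group $H_3$. You need both of these ingredients (or a substitute for the second); neither follows from Lemma~\ref{lem:hessorb}.
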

\begin{Proof}
We embed $E \subset \PP^{4}$ via the complete linear system $|5.0_E|$. 
The image is defined by some $\phi \in \wedge^2 V \otimes W$ 
with invariants $c_4$ and $c_6$. 
Let $H_1, \ldots, H_4$ be the Heisenberg groups~(\ref{def:H}) 
determined by $\phi$. \\ 
(i) Suppose that $\phi' = \lambda \Psi_7(\phi) + \mu \Psi_{17}(\phi)$ 
is non-singular. By Theorem~\ref{thm:hps} the genus one normal curves
$C_\phi \subset \PP^4$ and $C_{\phi'} \subset \PP^4$ have Heisenberg
groups $H_1$ and $H_3$. By Theorem~\ref{thm:inv} and Lemma~\ref{lem:hps}
the Jacobians of these curves are $E$ and $E_{\lambda,\mu}$. Since the
Heisenberg group carries the information of both the action of Galois
on the $5$-torsion of the Jacobian, and the Weil pairing (via the commutator),
it follows that $E$ and $E_{\lambda,\mu}$ are indirectly $5$-congruent. \\
(ii) Let $E'$ be an elliptic curve indirectly $5$-congruent to $E$. 
By \cite[Theorem 5.2]{paperI} there is a genus one normal curve 
$C' \subset \PP^{4}$ with Jacobian $E'$ and Heisenberg
group $H_3$. Then Theorem~\ref{thm:hps} shows that $C' = C_{\phi'}$ 
for some $\phi' = \lambda \Psi_7(\phi) + \mu \Psi_{17}(\phi)$.
Taking Jacobians gives $E' \isom E_{\lambda,\mu}$.
\end{Proof}

We also worked out formulae corresponding to the case $r = 2$ of
Theorem~\ref{thm:hps}. 
We omit the details since the family of elliptic curves obtained is 
the same as that in Theorem~\ref{thm:ye2}. (We encountered a similar
situation in \cite{g1hess} with the cases $r = \pm 1$.)

\begin{Example}
\label{ex:congr}
Let $F/\Q$ be the elliptic curve $y^2 + x y = x^3 - 607 x + 5721$
labelled $2834c1$ in Cremona's tables \cite{CrTables}. The invariants
of this Weierstrass equation are $c_4= 29137$ and $c_6 = -4986649$.
Substituting into the above expression for $\D$
and then making a change of variables\footnote{This change of variables 
was found by minimising to make the numerical factor on the right 
hand side of~(\ref{minrel}) a small integer, and reducing 
as described in \cite{SC}.} 
to simplify we obtain
\begin{align*}
{\mathfrak D}(\xi ,\eta) & = \frac{1}{2^{89} \cdot 3^{15} \cdot 13^{9}}
\D( 1663 \xi + 2850 \eta, 7 \xi + 18 \eta) \\
& = -60647 \xi^{12} + 74183 \xi^{11} \eta - 366344 \xi^{10} \eta^2
   - 965800 \xi^9 \eta^3 \\  
& ~\quad{}+ 1640430 \xi^8 \eta^4 - 166188 \xi^7 \eta^5 + 1473362 \xi^6 \eta^6 
   - 1041216 \xi^5 \eta^7 \\ 
& ~\quad{}+ 1224300 \xi^4 \eta^8 + 816860 \xi^3 \eta^9 
   - 474188 \xi^2 \eta^{10} + 22692 \xi \eta^{11} - 51256 \eta^{12}.
\end{align*}

By Theorem~\ref{thm:ye2} the family of elliptic curves indirectly 
$5$-congruent to $F$ is 
$y^2 = x^3 - 27 {\mathfrak c}_4(\xi,\eta) x - 54 {\mathfrak c}_6(\xi,\eta)$
where
\[ {\mathfrak c}_4 (\xi,\eta) =  
\frac{\!\!-1\,\,\,}{11^2} \left| \begin{matrix} \smallskip
\frac{\partial^2 {\mathfrak D}}{\partial \xi^2} & 
\frac{\partial^2 {\mathfrak D}}{\partial \xi \partial \eta} \\
\frac{\partial^2 {\mathfrak D}}{\partial \xi \partial \eta} &
\frac{\partial^2 {\mathfrak D}}{\partial \eta^2} 
\end{matrix} \right| 
\quad \text{ and } \quad 
 {\mathfrak c}_6(\xi,\eta)  =  
\frac{\!\!-1\,\,}{20} \left| \begin{matrix} \smallskip
\frac{\partial {\mathfrak D}}{\partial \xi} & 
\frac{\partial {\mathfrak D}}{\partial \eta} \\
\frac{\partial {\mathfrak c}_4}{\partial \xi} &
\frac{\partial {\mathfrak c}_4}{\partial \eta}
\end{matrix} \right|. \]
These polynomials satisfy the relation
\begin{equation}
\label{minrel}
{\mathfrak c}_4(\xi,\eta)^3- {\mathfrak c}_6(\xi,\eta)^2 
= 2 \cdot 13 \cdot 109^2 \cdot 1728 \, {\mathfrak D}(\xi,\eta)^5.
\end{equation}

We specialise $\xi, \eta$ to integers with $\max(|\xi|,|\eta|) \le 100$
and sort by conductor to obtain a list of elliptic curves that begins
\[ \begin{array}{cc|rl}
\xi & \eta & \multicolumn{1}{c}{\text{conductor}} & \multicolumn{1}{c}{[a_1, \ldots, a_6]}
\\ \hline
 3& 2 & 2834 &[ 1, -1, 1, -8109, -279017 ] \\
 0& 1 & 18157438 &[ 1, -1, 1, 68377761, 119969009527 ] \\
1& 0 & 171873598 &[ 1, 0, 0, 895245563, 21917334070263 ] \\
 2& 1 & 205326134 &[ 1, 0, 0, -637387852699482, -6550975667615204649116 ] \\
 1& 1 & 1506404198 &[ 1, 0, 0, -793652608607, -207340288851298727 ] \\
 1& -1 & 6582143542 &[ 1, 0, 0, -2705846635122, -1178369764561303100 ] 
\end{array} \]
The first curve in this list is the elliptic curve $E$ labelled
$2834d1$ in Cremona's tables. We discuss the elliptic curves $E$ and $F$ 
further in Example~\ref{ex:vis}.

\end{Example}

\section{Doubling in the $5$-Selmer group and visibility}
\label{sec:double}

Let $E/K$ be an elliptic curve. In \cite{paperI} we 
interpreted the group $H^1(K,E[n])$
as parametrising Brauer-Severi diagrams 
$[C \to S]$ as twists of $[ E \to \PP^{n-1}]$.
We also studied the obstruction map $\Ob_n : H^1(K,E[n]) \to \Br(K)[n]$ 
that sends the class of $[C \to S]$ to the class of the Brauer-Severi 
variety $S$. The diagrams with trivial obstruction, i.e. $S \isom \PP^{n-1}$, 
are genus one normal curves of degree~$n$. Strictly speaking a
diagram includes the choice of an action of $E$ on $C$. So in general 
a genus one normal curve of degree $n$ with Jacobian $E$ 
represents a pair of inverse elements in $H^1(K,E[n])$.

In the case $n=5$ we obtain the following partial interpretation
of $H^1(K,E[5])$ in terms of genus one models. We say that
genus one models are {\em properly equivalent} if they
are related by $(g_V,g_W) \in \GL(V) \times 
\GL(W)$ with $(\det g_V)^2 \det g_W = 1$. 

\begin{Theorem}
\label{thm:kerob}
Let $c_4$ and $c_6$ be the invariants of a Weierstrass equation
for~$E$. Then the genus one models over $K$ with invariants $c_4$
and $c_6$, up to proper $K$-equivalence, are parametrised by
$\ker(\Ob_5) \subset H^1(K,E[5])$. 
\end{Theorem}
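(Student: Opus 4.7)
The plan is to construct maps in both directions between genus one models with invariants $c_4, c_6$ modulo proper equivalence and $\ker(\Ob_5) \subset H^1(K,E[5])$, and to verify they are mutual inverses. In the forward direction, given such a $\phi$ I form $C_\phi \subset \PP(W^*)$; by Theorem~\ref{thm:inv}(ii) its Jacobian is literally the Weierstrass curve $E$ (matching $c_4, c_6$ on the nose), so the translation action of $E$ on $C_\phi$ produces a Brauer-Severi diagram $[C_\phi \to \PP^4]$ lying in $\ker(\Ob_5)$ because the ambient variety is the split form $\PP^4$. Conversely, given a class in $\ker(\Ob_5)$ represented by some $[C \to \PP^4]$ with $\Jac(C) \isom E$, the Buchsbaum-Eisenbud structure theorem cited in the introduction realises $C$ as $C_\phi$ for a model $\phi$, well-defined up to $\GL(V) \times \GL(W)$; since $(I_V, g_W)$ with any prescribed $\det g_W = w \in K^\times$ sends $(c_4(\phi), c_6(\phi))$ to $(w^4 c_4(\phi), w^6 c_6(\phi))$, we can match any Weierstrass form of $E$ exactly.

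Well-definedness on proper equivalence classes is a routine application of Lemma~\ref{wtlemma}: $c_4$ and $c_6$ have weights $(8,4)$ and $(12,6)$ respectively, so both transform by powers of $(\det g_V)^2 \det g_W$ and are fixed by proper equivalences, which moreover act on the diagram $[C_\phi \to \PP^4]$ as an isomorphism via the coordinate change $g_W$ on $\PP^4 = \PP(W^*)$. For the converse direction: if $\phi, \phi'$ both have invariants $(c_4, c_6)$ and give the same class in $H^1$, any isomorphism of the two diagrams produces $g_W \in \GL(W)$ taking $C_\phi$ to $C_{\phi'}$, and the uniqueness statement of \cite[Lemma~7.6]{g1hess} lifts this to $g_V \in \GL(V)$ with $(g_V,g_W)\phi = \phi'$. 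Preservation of the invariants forces $((\det g_V)^2 \det g_W)^4 = ((\det g_V)^2 \det g_W)^6 = 1$, hence $(\det g_V)^2 \det g_W = \pm 1$.

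The hard step is ruling out the minus sign. I expect to handle it by tracking how the identification $E \isom \Jac(C_\phi)$ built into Theorem~\ref{thm:inv}(ii) depends on $\phi$ itself and not merely on $C_\phi$. As noted in the paragraph preceding the theorem, a genus one normal curve stripped of its $E$-action represents an unordered pair $\{\xi,-\xi\}$ in $H^1(K,E[5])$, and what resolves this ambiguity is precisely an orientation carried by the model. The claim I anticipate proving is that applying an improper transformation with $(\det g_V)^2 \det g_W = -1$ inverts the induced Jacobian isomorphism, so that the resulting diagram represents $-\xi$ rather than $\xi$; once proved, two models producing the same class in $\ker(\Ob_5)$ must in fact satisfy $(\det g_V)^2 \det g_W = +1$, and injectivity follows. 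The most likely route to this sign compatibility is to reduce to a Hesse model by Lemma~\ref{lem:hessorb}, where the identification of $E$ with the Jacobian can be written down concretely, and then to verify the sign analysis using the Heisenberg data of Theorem~\ref{thm:exheis}.
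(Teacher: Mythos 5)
Your outline matches the intended structure, but the proof as written has a genuine gap at exactly the point you flag as ``the hard step''. Everything hinges on the claim that the model $\phi$ --- and not merely the curve $C_\phi$ --- determines a canonical identification $E \isom \Jac(C_\phi)$, and that a transformation $g=(g_V,g_W)$ changes this identification by composition with $[-1]$ precisely when $(\det g_V)^2\det g_W=-1$. Without this, your forward map is not even well defined as a map into $H^1(K,E[5])$ (you only get a class modulo $\pm 1$, as the paper itself warns just before the theorem), and your injectivity argument cannot rule out the improper sign. You state this claim as something you ``anticipate proving'' via Hesse models and Heisenberg data, but you do not prove it; this is not a routine verification, and it is precisely the content the paper imports. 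The actual mechanism is the invariant differential: the model $\phi$ determines a distinguished differential $\omega_\phi$ on $C_\phi$, hence on its Jacobian, which pins down the isomorphism with the chosen Weierstrass equation, and $\omega_{g\phi}$ differs from $\omega_\phi$ by the factor $(\det g_V)^2\det g_W$. The paper's proof consists of citing the $n=3$ analogue \cite[Theorem 2.5]{testeq} together with \cite[Proposition 5.19]{g1inv}, which is exactly this transformation law for $n=5$.

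A secondary issue: your deduction that preservation of $(c_4,c_6)$ forces $(\det g_V)^2\det g_W=\pm 1$ uses both $\lambda^4=1$ and $\lambda^6=1$, which requires $c_4\neq 0$ \emph{and} $c_6\neq 0$. For $j=0$ or $j=1728$ one of these vanishes and $\lambda$ can be a nontrivial fourth or sixth root of unity; these cases correspond to the extra automorphisms of $E$ and must be absorbed into the equivalence, which again is handled cleanly by tracking the differential rather than the pair of invariants. So the proposal is a correct skeleton with the load-bearing lemma left unestablished.
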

\begin{Proof}
This is analogous to the case $n=3$ treated in 
\cite[Theorem 2.5]{testeq}. The proof given there relies on a 
statement about invariant differentials which is generalised to
the case $n=5$ in \cite[Proposition 5.19]{g1inv}.
\end{Proof}

The obstruction map is not a group 
homomorphism and its kernel is not a group. So given two genus 
one models with the same invariants, 
their sum in $H^1(K,E[5])$ need not be represented by a 
genus one model. However the obstruction map is quadratic and in
particular satisfies $\Ob_n( a \xi) = a^2 \Ob_n(\xi)$ for $a \in \Z$.
If $\phi$ is a genus one model representing $\xi \in \ker(\Ob_5)$ then 
$-\phi$ (which has the same invariants) represents $-\xi$. In this 
section we show how to find a model $\phi'$ representing $\pm 2 \xi$.
It turns out that $C_{\phi'}$ sits inside an ambient space $\PP^4$  
that is naturally the dual of the ambient space for $C_\phi$.

First we need to recall another of the interpretations of
$H^1(K,E[n])$ given in \cite{paperI}. A {\em theta group} for $E[n]$ 
is a central extension of $E[n]$ by $\Gm$ with commutator given
by the Weil pairing. The base theta group $\Theta_E \subset \GL_n(\Kbar)$ 
is the set of all matrices that act on the base diagram 
$[E \to \PP^{n-1}]$ as translation by an $n$-torsion point of $E$. 
Then $H^1(K,E[n])$
parametrises the theta groups for $E[n]$ as twists of $\Theta_E$.

In Section~\ref{sec:congr} we saw that a non-singular genus one model
$\phi$ determines Heisenberg groups $H_1, \ldots, H_4$. We write 
$\Theta_r$ for the theta group generated by $H_r$ and the scalar 
matrices. Writing $E = \Jac(C_\phi)$ we see that $\Theta_r$ is a
theta group for $E[5]$ where the latter is equipped with the
$r$th power of the Weil pairing.

\begin{Lemma}
\label{lem:theta}
Let $\phi, \phi' \in \wedge^2 V \otimes W$ be non-singular
genus one models determining theta groups $\Theta_1, \ldots, \Theta_4$
and $\Theta'_1, \ldots, \Theta'_4$. If $\Jac(C_\phi)=\Jac(C_{\phi'})=E$
then there exists $\xi \in H^1(K,E[5])$ and isomorphisms 
$\gamma_r : \Theta_r \isom \Theta'_r$ such that
\begin{equation}
\label{thetatwist}
  \sigma(\gamma_r) \gamma_r^{-1} : x \mapsto e_5(\xi_\sigma,x)^r x 
\end{equation}
for all $\sigma \in \Gal(\Kbar/K)$ and $r \in (\Z/5\Z)^\times$.
\end{Lemma}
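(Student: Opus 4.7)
The plan is to build the cocycle $\xi$ and all four $\gamma_r$ simultaneously from one common $\Kbar$-isomorphism between $\phi$ and $\phi'$, and then extract the formula~(\ref{thetatwist}) from the commutator pairings of the four theta groups.

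First, over $\Kbar$ the curves $C_\phi$ and $C_{\phi'}$ share Jacobian $E$ and are embedded by complete linear systems of the same degree; since $H^1(\Kbar,E[5])=0$ they are $\Kbar$-projectively equivalent as embedded curves. This produces a pair $(g_V,g_W) \in \GL(V)(\Kbar) \times \GL(W)(\Kbar)$ with $(g_V,g_W)\phi \propto \phi'$: the component $g_W$ comes from the projective isomorphism $\PP(W^*) \to \PP(W^*)$ carrying $C_\phi$ to $C_{\phi'}$, and $g_V$ is then determined up to sign by the unlabelled lemma at the start of Section~\ref{sec:congr} applied to the pair $(\phi,\phi')$. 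Conjugation by the four operators on $W^*$, $V^*$, $V$, $W$ respectively induced by $(g_V,g_W)$ carries $H_r$ to $H_r'$, and hence $\Theta_r$ to $\Theta_r'$; call the resulting $\Kbar$-isomorphism $\gamma_r$.

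Next, for $\sigma \in \Gal(\Kbar/K)$, applying $\sigma$ to $(g_V,g_W)\phi \propto \phi'$ and using that $\phi,\phi'$ are defined over $K$ shows that $\sigma(g_V,g_W) \cdot (g_V,g_W)^{-1}$ fixes $\phi'$ up to scalars; the same unlabelled lemma identifies it (up to scalar and sign) with $(\chi_V(\xi_\sigma),\chi_W(\xi_\sigma))$ for a unique $\xi_\sigma \in E[5](\Kbar)$, and the standard twist calculation makes $\sigma \mapsto \xi_\sigma$ into a 1-cocycle. By construction $\sigma(\gamma_r)\gamma_r^{-1}$ is conjugation on $\Theta_r'$ by the image $h_r \in H_r' \subset \Theta_r'$ of $\sigma(g_V,g_W) \cdot (g_V,g_W)^{-1}$ in the factor acting on the ambient space of $H_r'$, and $h_r$ projects to $\xi_\sigma$ in $E[5]$. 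Because the commutator pairing on $\Theta_r'$ is the $r$-th power of the Weil pairing (as noted immediately after~(\ref{def:H})), the commutator identity $h_r x h_r^{-1} = e_5(\xi_\sigma,\bar x)^r \cdot x$ for $x \in \Theta_r'$ is exactly~(\ref{thetatwist}).

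The main obstacle will be the bookkeeping: one must verify that the four projections of $\sigma(g_V,g_W) \cdot (g_V,g_W)^{-1}$ into the four Heisenberg groups all deliver the \emph{same} element $\xi_\sigma \in E[5]$, and manage the $\pm 1$ ambiguity in the lifts consistently across $r$. Both issues resolve correctly precisely because all four $\gamma_r$ are built from the single $\Kbar$-rational pair $(g_V,g_W)$, so compatibility among them is automatic.
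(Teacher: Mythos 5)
Your proposal follows the paper's proof almost line for line: pick a single $\Kbar$-equivalence $(g_V,g_W)$ between $\phi$ and $\phi'$, let each $\gamma_r$ be conjugation by the operator it induces on $W^*$, $V^*$, $V$, $W$ respectively, observe that $\sigma(g)g^{-1}$ lies over a single $\xi_\sigma \in E[5]$ independent of $r$, and read off~(\ref{thetatwist}) from the fact that the commutator pairing on $\Theta'_r$ is the $r$th power of the Weil pairing. The one place where your argument is not quite closed is the inference that an element fixing $\phi'$ up to scalars must be of the form $(\chi_V(\xi_\sigma),\chi_W(\xi_\sigma))$: the automorphisms of $C_{\phi'} \subset \PP^4$ comprise not only the translations by $E[5]$ but also lifts of automorphisms of $(E,0)$ preserving the hyperplane class (for instance $[-1]$ when the degree-$5$ divisor class is symmetric), so ``fixes $\phi'$ up to scalars'' does not by itself place $\sigma(g)g^{-1}$ in the Heisenberg group. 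The paper closes this by normalising $g$ so that it induces the identity on the Jacobian; since $E$ is defined over $K$, the conjugate $\sigma(g)$ then also induces the identity, whence $\sigma(g)g^{-1}$ acts on $C_{\phi'}$ as translation by a $5$-torsion point. With that normalisation added your argument coincides with the paper's, and your closing concern about the four projections delivering the same $\xi_\sigma$ is resolved exactly as you say: all four Heisenberg elements are images of the one element $\sigma(g)g^{-1}$ under $\chi_V$, $\chi_W$ and their duals.
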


\begin{Proof}
The curves $C_\phi$ and $C_{\phi'}$ are isomorphic over $\Kbar$.
So by \cite[Proposition 4.6]{g1inv} there exists $g = (g_V,g_W)
\in \GL(V) \times \GL(W)$ with $g \phi = \phi'$. In fact we 
can choose $g$ so that it induces the identity map
on the Jacobian $E$. The isomorphisms $\gamma_1, \ldots, \gamma_4$
are conjugation by $g_W^{-T}, g_V^{-T}, g_V, g_W$
where the superscript $-T$ indicates inverse transpose.
Then $\sigma(\gamma_r) \gamma_r^{-1}$ is conjugation by an element of 
$\Theta'_r$ above $\xi_\sigma \in E[5]$, with $\xi_\sigma$ independent of 
$r$. The conclusion~(\ref{thetatwist}) follows since the commutator
pairing for $\Theta_r$ is the $r$th power of the Weil pairing.
\end{Proof}

\begin{Theorem} 
\label{thm:theta}
Let $E$ and $F$ be elliptic curves over $K$ 
and $\psi: E[5] \isom F[5]$ an isomorphism of Galois modules 
with $e_5(\psi S, \psi T) = e_5(S,T)^r$
for some $r \in (\Z/5\Z)^\times$. Let $\Theta_1, \ldots, \Theta_4$ be 
the theta groups determined by a non-singular genus one model 
$\phi \in \wedge^2 V \otimes W$ with $\Jac(C_\phi) = E$. 
If $\Theta_1$ is the twist of $\Theta_E$ by $\xi \in H^1(K,E[5])$ then
$\Theta_r$ is the twist of $\Theta_F$ by $\psi_*(\xi) \in H^1(K,F[5])$
\end{Theorem}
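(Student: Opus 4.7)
The plan is to emulate the proof of Lemma~\ref{lem:theta} by introducing an auxiliary genus one model that serves as a bridge between $\phi$ and the base configuration for $F$.  Working over $\Kbar$, choose $\phi^E \in \wedge^2 V \otimes W$ so that $C_{\phi^E} \subset \PP(W^*)$ is the image of $E$ under the complete linear system $|5.0_E|$.  Then $\Theta_1^E = \Theta_E$ by construction, while $\Theta_2^E,\Theta_3^E,\Theta_4^E$ are the further theta groups attached to $E[5]$ with commutator pairing $e_5^r$ for $r=2,3,4$.  Applying Lemma~\ref{lem:theta} to the pair $(\phi,\phi^E)$, both with Jacobian $E$, yields $\Kbar$-isomorphisms $\gamma_r : \Theta_r^E \isom \Theta_r$ whose Galois cocycles satisfy $\sigma(\gamma_r)\gamma_r^{-1} : x \mapsto e_5(\xi_\sigma,x)^r x$ for a single class $\xi \in H^1(K,E[5])$; the case $r=1$ forces this $\xi$ to be the cocycle class stipulated in the hypothesis, since $\Theta_1^E = \Theta_E$ and $\Theta_1$ is given as the twist of $\Theta_E$ by $\xi$.

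The crux of the argument is to identify $\Theta_r^E$, regarded as a theta group for $F[5]$ via $\psi$, with the base theta group $\Theta_F$.  By Theorem~\ref{thm:hps}, the Heisenberg subgroup $H_r^E \subset \Theta_r^E$ is the common Heisenberg group of a pencil of genus one normal curves, every member of which is 5-congruent to $E$ with Weil-pairing factor~$r$.  Since $F$ and $\psi$ are defined over $K$ and $F$ appears in the family $Y_E^{(r)}(5)$ by hypothesis, we can select a member of this pencil whose Jacobian is $K$-isomorphic to $F$ via an isomorphism inducing $\psi$ on 5-torsion.  For this member the base theta group coincides with $\Theta_r^E$ as a subgroup of $\SL_5(\Kbar)$, and the compatibility with $\psi$ makes this identification $\Gal(\Kbar/K)$-equivariant.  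Therefore $\Theta_r^E = \Theta_F$ as $K$-theta groups for $F[5]$.

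Composing the identifications of the two previous paragraphs yields a $\Kbar$-isomorphism $\gamma_r : \Theta_F \to \Theta_r$ whose Galois cocycle $\sigma(\gamma_r)\gamma_r^{-1}$ is conjugation by an element of $\Theta_F$ lying above $\psi(\xi_\sigma) \in F[5]$.  By the classification of theta groups as twists parametrised by $H^1$, this exhibits $\Theta_r$ as the twist of $\Theta_F$ by $\psi_*(\xi)$, as required.  The principal obstacle is the identification $\Theta_r^E = \Theta_F$ in the second paragraph: it requires aligning the pencil construction of Theorem~\ref{thm:hps} with the modular parametrisation of 5-congruent elliptic curves, ensuring in particular that the chosen pencil member can be realised over $K$ in a way compatible with the specified isomorphism $\psi$.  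Once that geometric comparison is pinned down, the remainder is a formal cocycle transfer via $\psi$.
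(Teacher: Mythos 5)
Your outer framework --- reducing to the base model $\phi^E$ (the image of $E$ under $|5.0_E|$) via Lemma~\ref{lem:theta}, and then transferring the cocycle through $\psi$ using $e_5(\psi S,\psi T)=e_5(S,T)^r$ --- is exactly the reduction the paper makes, and your cocycle bookkeeping in the first and third paragraphs is correct. But the step you yourself flag as the ``principal obstacle'', namely $\Theta_r^E\isom\Theta_F$, is not a detail to be pinned down later: as written, your argument for it is circular. Theorem~\ref{thm:hps} does give a pencil member $C'$ with Jacobian $F$ and Heisenberg group $H_r^E$, and its theta group is indeed $\Theta_r^E$; but that theta group is the twist of $\Theta_F$ by the class of $[C'\to\PP^4]$ in $H^1(K,F[5])$, and nothing you have said forces that class to be trivial. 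Having Jacobian isomorphic to $F$ is far weaker than being the base diagram $F\subset\PP^4$ embedded by $|5.0_F|$; Example~\ref{ex:vis} shows pencil members with a prescribed Jacobian that are nontrivial elements of $\Sha(E/\Q)[5]$, hence certainly not base diagrams. Asserting that ``the base theta group coincides with $\Theta_r^E$'' for your chosen member is precisely the assertion that the trivial torsor of $F$ lies in the pencil, which is equivalent to the $\xi=0$ case of the theorem you are trying to prove.

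The paper closes this gap with a genuinely different argument that you would need to supply (or replace). For $\phi=\phi^E$ it shows directly that $\Theta_r\to E[5]$ admits a Galois-equivariant section $T\mapsto M_T$ with $M_SM_T=e_5(S,T)^{r/2}M_{S+T}$: the $[-1]$-map on $E$ lifts to involutions in $\PGL(V)$ and $\PGL(W)$, each $M_T$ has a unique scaling with $M_T^5=I$ and $\iota M_T\iota^{-1}=M_T^{-1}$ (uniqueness gives Galois equivariance), and conjugating the relation $M_SM_T=\lambda M_{S+T}$ by $\iota$ yields $\lambda^2=e_5(S,T)^r$. Since $\Theta_F$ admits a section with the same structure constants relative to $F[5]$ (the $r=1$ case applied to $F$), and $\psi$ matches the two sets of structure constants, one concludes $\Theta_r^E\isom\Theta_F$ as theta groups for $F[5]$. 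Without this (or an equivalent characterisation of the base theta group), the proof does not go through.
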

\begin{Proof}
We first prove the case $\xi = 0$. We claim that if $\phi$ describes the
image of $E \subset \PP^4$ embedded by $|5.0_E|$ then 
$\Theta_r \to E[5]$ has a Galois
equivariant section $T \mapsto M_T$ with $M_S M_T = e_5(S,T)^{r/2} M_{S+T}$.
We recall the proof of this in the case $r=1$ from \cite[Lemma 3.11]{paperI}.
The $[-1]$-map on $E$ lifts to $\iota \in \PGL_5(K)$. Then there 
is a unique scaling of $M_T$ such that $M_T^5 = I$ and $\iota M_T \iota^{-1}
= M_T^{-1}$. The uniqueness ensures that  $T \mapsto M_T$ is 
Galois equivariant. Now if $M_S M_T = \lambda M_{S+T}$ then conjugating
by $\iota$ gives $M_S^{-1} M_T^{-1} = \lambda M_{S+T}^{-1}$ and so
$\lambda^2 = M_S M_T M_S^{-1} M_T^{-1} = e_5(S,T)$. This proves the
claim when $r=1$. The cases $r=2,3,4$ are similar using that the 
$[-1]$-map induces involutions in $\PGL(V)$ and $\PGL(W)$.
Applying the claim to both $E$ and $F$ we see that if 
$\Theta_1 \isom \Theta_E$ then $\Theta_r \isom \Theta_F$.
This proves the theorem in the case $\xi=0$. 
The general case follows by Lemma~\ref{lem:theta}.
\end{Proof}

According to Table~4.6 the $K[c_4,c_6]$-module 
of covariants for $Y = \wedge^2 V^* \otimes W^*$ is generated 
by covariants $\Pi_{19}$ and $\Pi_{29}$ of degrees $19$ and $29$. 
The corresponding discrete covariants are
\begin{equation}
\label{def:Pi}
 (a,b) \mapsto \textstyle\frac{1}{\deg c_k} \left(
  \frac{\partial c_k}{\partial a} \sum (v^*_1 \wedge v^*_4) w^*_0 
+ \frac{\partial c_k}{\partial b} \sum (v^*_2 \wedge v^*_3) w^*_0 
\right) 
\end{equation}
for $k=4,6$. As noted in \cite{g1hess} these are the evectants of 
$c_4$ and $c_6$.


\begin{Theorem}
\label{thm:double}
Let $\Pi_{49} = \frac{1}{144} (c_6 \Pi_{19} - c_4  \Pi_{29})$ be
the covariant of degree $49$ whose restriction to the Hesse family
is 
\[ (a,b) \mapsto  D^4 \big( b \textstyle\sum (v_1^* \wedge v_4^*) w_0^* - a
\sum (v_2^* \wedge v_3^*) w_0^* \big) \]
If $\phi \in \wedge^2 V \otimes W$ is non-singular and
$\phi' = \Pi_{49}(\phi)$ then  
\begin{enumerate}
\item $C_\phi$ and $C_{\phi'}$ have the same Jacobian elliptic curve $E$, and
\item the class of $[C_{\phi'} \to \PP^4]$ is twice the class of  
$[C_\phi \to \PP^4]$ in $H^1(K,E[5])$.
\end{enumerate}
\end{Theorem}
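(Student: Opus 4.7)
For part~(i), I reduce to the Hesse family using Lemma~\ref{lem:hessorb} and the covariance of $\Pi_{49}$, so that I may take $\phi = u(a,b)$. The stated formula then expresses $\phi'$ as a Hesse-type element of $\wedge^2 V^* \otimes W^*$ with parameters $(D^4 b, -D^4 a)$. The symmetries $c_4(b,-a) = c_4(a,b)$ and $c_6(b,-a) = c_6(a,b)$ of~(\ref{discinv}), combined with homogeneity of degrees $20$ and $30$, give $c_4(\phi') = D^{80} c_4(\phi)$ and $c_6(\phi') = D^{120} c_6(\phi)$. Hence the Weierstrass equations of $\Jac(C_\phi)$ and $\Jac(C_{\phi'})$ are related by the $K$-rational change of variables $(x,y) \mapsto (D^{40} x, D^{60} y)$, establishing part~(i).

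For part~(ii), the main tool is Theorem~\ref{thm:theta} applied with $F = E$ and $\psi = [2] : E[5] \to E[5]$. Since $e_5(2S,2T) = e_5(S,T)^4$, the hypothesis is satisfied with $r=4$, and the theorem identifies the fourth theta group $\Theta_4(\phi)$ with the twist of $\Theta_E$ by $[2]_* \xi = 2\xi$, where $\xi \in H^1(K,E[5])$ is the class represented by $[C_\phi \to \PP^4]$. The central step is to identify $\Theta_1(\phi')$ with $\Theta_4(\phi)$ as subgroups of $\SL(W)$, under the canonical identification $\SL((W^*)^*) = \SL(W)$. Granting this, combined with the isomorphism $\Jac(C_{\phi'}) \cong E$ from part~(i), we conclude that $[C_{\phi'} \to \PP^4]$ represents $\pm 2\xi$ in $H^1(K, E[5])$; the sign reflects the inherent ambiguity in associating a class in $H^1$ to a genus one model.

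The identification $\Theta_1(\phi') = \Theta_4(\phi)$ is verified by reducing again to the Hesse locus. For $\phi = u(a,b)$, the group $H_4(\phi) \subset \SL(W)$ is $\theta_W(H_5)$ by~(\ref{thetavw}); and since $\phi'$ is a Hesse-type model in $\wedge^2 V^* \otimes W^*$, the group $H_1(\phi') \subset \SL((W^*)^*) = \SL(W)$ is the standard Heisenberg group on $W$ (with basis $w_0,\ldots,w_4$), again given by $\theta_W(H_5)$. Hence the two coincide on the Hesse locus. For arbitrary non-singular $\phi = g\phi_0$ with $\phi_0$ Hesse, the $\SL(V)\times\SL(W)$-covariance of $\Pi_{49}$ ensures that both $H_4(\phi)$ and $H_1(\phi')$ are obtained from the corresponding groups for $\phi_0$ by conjugation in $g_W \in \GL(W)$, so the identification persists.

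The main obstacle I anticipate is not the algebraic matching of these subgroups of $\SL(W)$, which is essentially mechanical, but ensuring that the quotient identifications $H_1(\phi')/\mu_5 = \Jac(C_{\phi'})[5]$ and $H_4(\phi)/\mu_5 = E[5]$, composed with the Jacobian isomorphism from part~(i), agree up to the expected $\pm 1$ ambiguity. This compatibility reduces to a check on the Hesse family, where the Jacobian isomorphism is the explicit change of variables $(x,y) \mapsto (D^{40} x, D^{60} y)$ and both Heisenberg identifications are expressed through the same standard representations of $H_5$.
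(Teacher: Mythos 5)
Your proposal is correct and follows essentially the same route as the paper: part (i) is the paper's Hesse-family computation of $c_4(\phi')=\Delta(\phi)^{16}c_4(\phi)$ and $c_6(\phi')=\Delta(\phi)^{24}c_6(\phi)$ spelled out in detail, and part (ii) applies Theorem~\ref{thm:theta} with $\psi=[2]$, $r=4$ exactly as the paper does. The only cosmetic difference is that where the paper cites Theorem~\ref{thm:hps} to identify $\Theta_1(\phi')$ with $\Theta_4(\phi)$, you re-derive that identification by the same Hesse-locus-plus-covariance argument used in the proof of Theorem~\ref{thm:hps}; your closing remark on the unresolved $\pm1$ ambiguity matches Remark 5.8(i).
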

\begin{Proof} (i) By considering $\phi$ a Hesse model we deduce
\begin{align*}
c_4(\phi') &= \Delta(\phi)^{16} c_4(\phi) \\
c_6(\phi') &= \Delta(\phi)^{24} c_6(\phi) 
\end{align*}
It follows by Theorem~\ref{thm:inv} that the Jacobians are isomorphic. \\
(ii) We apply Theorem~\ref{thm:theta} in the case $\psi : E[5] \to E[5]$ is 
multiplication by $2$. This shows that the double of 
$[C_{\phi} \to \PP^4]$ has theta group $\Theta_4$. By (i) and 
Theorem~\ref{thm:hps} this double is $[C_{\phi'} \to \PP^4]$.
\end{Proof}

\begin{Remarks}
(i) Whether Theorem~\ref{thm:double} is a formula for doubling or 
tripling in $H^1(K,E[5])$ depends on the choice of isomorphism 
$\Jac(C_\phi) \isom \Jac(C_{\phi'})$. We have not attempted to resolve
these sign issues. \\
(ii) If $K$ is a number field then the $n$-Selmer group 
$S^{(n)}(E/K)$ is by definition a subgroup $H^1(K,E[n])$. It is well known
that  $S^{(n)}(E/K) \subset \ker (\Ob_n)$. Thus Theorem~\ref{thm:double}
gives a formula for doubling/tripling in the $5$-Selmer group. \\
(iii) Let $g =(g_V,g_W) \in \GL(V) \times \GL(W)$ be any element defined over
$K$ with $(\deg g_V)^2 (\det g_W) = \Delta(\phi)^4$, for example 
a pair of diagonal matrices. Then in terms of Theorem~\ref{thm:kerob}
the double/triple of $\phi$ is $\pm g^{-1} \Pi_{49} (\phi)$.
\end{Remarks}


\begin{Example} 
\label{ex:double}
Wuthrich \cite{W} constructed an element of order $5$
in the Tate-Shafarevich group of the elliptic curve $E/\Q$ with
Weierstrass equation
\[ y^2 + x y + y = x^3 + x^2 - 3146 x + 39049. \]
His example (also discussed in \cite[Section 9]{g1inv}) 
is defined by the $4 \times 4$ Pfaffians of
\[ \begin{pmatrix}
0  & 310 x_1 + 3 x_2 + 162 x_5 &  -34 x_1 - 5 x_2 - 14 x_5  
& 10 x_1 + 28 x_4 + 16 x_5 &  80 x_1 - 32 x_4 \\
&  0 &  6 x_1 + 3 x_2 + 2 x_5 &  -6 x_1 + 7 x_3 - 4 x_4 &  -14 x_2 - 8 x_3 \\
&  &  0 &  -x_3  & 2 x_2 \\
& - &  &  0  & -4 x_1 \\
&  &  &  &  0
\end{pmatrix} \]
The algorithms in \cite{minred5} suggest making a change of co-ordinates
\[ \begin{pmatrix} x_1 \\ x_2 \\ x_3 \\ x_4 \\ x_5 \end{pmatrix}
\leftarrow
\begin{pmatrix}
  0 &  4  & -8  &  4  &  8 \\
  0 &   0 &   0 &   0 &  16 \\
  0 &  -4  &  4  &  0  & 12 \\
  4 &   5 & -15  &  2  &  7\\
  4 & -12 &  20 & -12  & -8
\end{pmatrix} \begin{pmatrix} x_1 \\ x_2 \\ x_3 \\ x_4 \\ x_5 \end{pmatrix} \]
so that Wuthrich's example becomes
\[ \begin{pmatrix}
0 &  x_2 + x_5 &  -x_5  & -x_1 + x_2 &  x_4 \\
 &  0 &  x_2 - x_3 + x_4 &  x_1 + x_2 + x_3 - x_4 - x_5  & x_1 - x_2 - x_3 - x_4 - x_5 \\
 &  &  0 &  x_1 - x_2 + 2 x_3 - x_4 - x_5 &  -x_2 - x_4 + x_5 \\
 & - &  &  0 &  -x_3 - x_4 - 2 x_5 \\
 &  &  &  &  0
\end{pmatrix} \]
Our Magma function {\tt DoubleGenusOneModel} uses the algorithms 
in Section~\ref{sec:constr} to evaluate 
$\Pi_{19}$ and $\Pi_{29}$ and then returns
$\Pi_{49} = \frac{1}{144} (c_6 \Pi_{19} - c_4  \Pi_{29})$. 
Running it on the
above model $\phi$ gives a model $\phi'$ with entries
\small
\begin{align*}
\phi'_{12} &= 3534132778 x_1 + 3583651940 x_2 - 881947110 x_3 - 323014538 x_4 + 3395115339 x_5 \\ 
\phi'_{13} &= 5079379222 x_1 - 2965539950 x_2 + 11022202860 x_3 + 12821590868 x_4 + 640276471 x_5 \\ 
\phi'_{14} &= -10098238458 x_1 - 1274966110 x_2 - 7873816170 x_3 - 3456923272 x_4 - 62353929 x_5 \\ 
\phi'_{15} &= -12929747724 x_1 - 6790511810 x_2 - 11113305270 x_3 - 15161763156 x_4 + 3241937033 x_5 \\ 
\phi'_{23} &= -3381247332 x_1 + 3810679160 x_2 + 5919634530 x_3 + 75326852 x_4 - 1245085426 x_5 \\ 
\phi'_{24} &= -3572860258 x_1 - 5569480730 x_2 - 953739600 x_3 - 2138046812 x_4 - 858145244 x_5 \\ 
\phi'_{25} &= -4674149266 x_1 - 943631490 x_2 - 6754488160 x_3 + 751535046 x_4 + 117685567 x_5 \\ 
\phi'_{34} &= -1851228934 x_1 + 5238146110 x_2 - 165588410 x_3 - 2070411506 x_4 + 678105748 x_5 \\ 
\phi'_{35} &= -6992835070 x_1 - 3744630360 x_2 + 3130208220 x_3 - 4523781310 x_4 + 433739425 x_5 \\ 
\phi'_{45} &= 780078472 x_1 + 2039763820 x_2 - 450062790 x_3 - 7105731722 x_4 + 1625466111 x_5 
\end{align*}
\normalsize

The algorithms in \cite{minred5} suggest making a change of co-ordinates
\[ \begin{pmatrix} x_1 \\ x_2 \\ x_3 \\ x_4 \\ x_5 \end{pmatrix}
\leftarrow
\begin{pmatrix}
  92 & -36 &-153 & 129 &-131 \\
 -54 &  84 &   5 &-206 & 139 \\
 -63 &-174 & -60 & -79 &  53 \\
-111 & 106 & 206 &-115 &-162 \\
 314 &-466 & 158 &-328 & -12
\end{pmatrix} 
\begin{pmatrix} x_1 \\ x_2 \\ x_3 \\ x_4 \\ x_5 \end{pmatrix} \]
whereupon the model $\phi'$ simplifies to 
\[ \begin{pmatrix}
0  & -x_4 + x_5 &  x_3 - x_4 + x_5 &  x_2 - x_5 &  x_1 - x_2 + x_3 - x_4 - 2 x_5 \\
 &  0 &  x_1 + x_5  & -x_2 - x_3 &  -x_2 + x_5 \\
 &  &  0 &  x_4 &  -x_1 \\
 & - &  &  0 &  x_1 + x_4 - x_5 \\
 &  &  &  &  0
\end{pmatrix} \]
This is the double in $\Sha(E/\Q)[5]$ of Wuthrich's example. If we double 
again then we get back to the original example. Moreover the matrices 
needed to minimise and reduce are the transposes of those used above.
\end{Example}

Let $E$ and $F$ be a pair of $n$-congruent elliptic curves.
In terminology introduced by Mazur \cite{CM} the 
{\em visible subgroup} of $H^1(K,E)$ explained by $F(K)$ is the image 
of the composite
\[ \frac{F(K)}{nF(K)} \stackrel{\delta}{\ra} 
H^1(K,F[n]) \isom H^1(K,E[n]) \stackrel{\iota}{\to} H^1(K,E)[n] \]
where the maps $\delta$ and $\iota$ come from the
Kummer exact sequences for $E$ and $F$, and the middle isomorphism
is induced by the congruence. Our interest is 
in using visibility to compute explicit elements of 
$H^1(K,E)$. In \cite{g1hess} we gave examples in the cases 
$n=2,3,4,5$ assuming in the case $n=5$ that the congruence 
$E[5] \isom F[5]$ respects the Weil pairing. 
Using Theorems~\ref{thm:hps} and~\ref{thm:theta} and the explicit
constructions in Section~\ref{sec:constr} we may now remove
this restriction.

\begin{Example} 
\label{ex:vis}
We start with the pair of elliptic curves $E = 2834d1$ and $F = 2834c1$
taken from \cite[Table 1]{CM}. We have already seen in 
Example~\ref{ex:congr} that $E$ and $F$ are indirectly $5$-congruent.
Alternatively this may be checked as follows. Let 
$\cc_4(\la,\mu)$ and $\cc_6(\la,\mu)$ be the polynomials defined
in Section~\ref{sec:congr} with coefficients specialised to the invariants
$c_4= 29137$ and $c_6 = -4986649$ of $F$. 
Then writing $j_E =-389217^3/(2 \cdot 13 \cdot 109^3)$
for the $j$-invariant of $E$ we find that the binary form of degree 60
  \[ (1728-j_E ) \cc_4(\la,\mu)^3+j_E \cc_6(\la,\mu)^2 = 0 \]
has a unique $\Q$-rational root. Substituting this root
$(\la:\mu) = ( 3563: 19)$ into Theorem~\ref{thm:ye2} confirms that
$E$ and $F$ are indirectly $5$-congruent. 

Our method is now the same as that in \cite[Section 15]{g1hess} except 
that in place of the Hessian we use the covariants
\[  \Psi_{7}, \Psi_{17} : \wedge^2 V \otimes W \to V^* \otimes \wedge^2 W. \] 
We have $F(\Q) \isom \Z^2$ generated by $P_1 = (-10,109)$ and
$P_2 = (-28,45)$. If we embedding $F \subset \PP^4$ via the complete linear
system $|4.0_F + P|$ with $P = P_1$ then the image is defined by 
a genus one model $\phi$. Our Magma function {\tt GenusOneModel(5,P)} 
computes such a model 
\small
\[ \begin{pmatrix}
0 &  x_2 + 2 x_4 - 3 x_5  & 3 x_1 - x_2 + 8 x_3 + 2 x_4 - 3 x_5  & x_1 - 2 x_4 + 3 x_5 &  x_3 - x_4 + x_5 \\
 & 0 & 3 x_2 + 2 x_3 + 2 x_4 + 3 x_5 &  x_2 + x_3  & x_5 \\
 &  &  0 &  x_3 + 2 x_4 + 2 x_5  & x_4 + x_5 \\
 & - &  &  0 &  0 \\
 &  &  &  &  0
\end{pmatrix} \]
\normalsize
with the same invariants as $F$. The algorithms in Section~\ref{sec:constr}
for evaluating $\Psi_7$ and $\Psi_{17}$ are implemented in our
Magma function {\tt HesseCovariants(phi,2)}. We use them to compute
$\phi' = 3563 \Psi_7(\phi) + 19 \Psi_{17}(\phi)$.
The algorithms in \cite{minred5} suggest making a 
change of co-ordinates
\small
\[ \begin{pmatrix} x_1 \\ x_2 \\ x_3 \\ x_4 \\ x_5 \end{pmatrix}
\leftarrow
\begin{pmatrix}
-16 & 16 & -9 & -4 & -8 \\
  0 &  0 & -2 & -4 &  0 \\
 24 &  0 &  7 & -4 &  8 \\
  8 &  0 &  2 &  0 &  0 \\
  0 &  0 &  1 &  0 &  0
\end{pmatrix}
\begin{pmatrix} x_1 \\ x_2 \\ x_3 \\ x_4 \\ x_5 \end{pmatrix} \]
\normalsize
so that $\phi'$ becomes
\small
\[ \begin{pmatrix}
0 &  -x_2 + x_3 &  x_1 - x_5 &  x_2 - 2 x_5 &  x_2 - x_4 - x_5 \\
 &  0  & -x_1 + x_2 + x_3 + 2 x_5 &  -x_2 + x_4 + x_5 &  -x_1 - x_2 \\
 &  &  0 &  -x_1 + x_2 + x_4 &  x_3 + x_4 \\
 & - &  &  0 &  x_1 \\
 &  &  &  &  0
\end{pmatrix}. \]
\normalsize

This genus one model has the same invariants as $E$. In particular
its $4 \times 4$ Pfaffians define a curve $C \subset \PP^4$ with 
good reduction at all primes $p \not=2,13,109$. For $p = 2,13,109$ 
we checked directly that $C(\Q_p) \not= \emptyset$.
Since $E(\Q)=0$ it follows that $C$ represents a non-trivial 
element of $\Sha(E/\Q)[5]$. Repeating for 
$P = r_1 P_1 + r_2 P_2$ for $0 \le r_1,r_2 \le 4$ we similarly
find equations for all elements in a subgroup of $\Sha(E/\Q)$ 
isomorphic to $(\Z/5\Z)^2$. 
\end{Example}

The following corollary was already proved in many (but not all)
cases in the appendix to \cite{AS2}.

\begin{Corollary} 
Let $(E,F,n)$ be any of the triples listed in \cite[Table 1]{CM}. 
Then $E$ and $F$ are $n$-congruent, and the visible subgroup of
$H^1(\Q,E)$ explained by $F(\Q)$ is contained in $\Sha(E/\Q)$.
\end{Corollary}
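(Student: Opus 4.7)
The plan is to verify the corollary by working through each triple $(E,F,n)$ in \cite[Table 1]{CM} by essentially the method illustrated in Example~\ref{ex:vis}, noting that the cases missing from the appendix to \cite{AS2} are precisely those for which the $5$-congruence $E[5] \isom F[5]$ fails to respect the Weil pairing, and these are now accessible thanks to Theorems~\ref{thm:hps} and~\ref{thm:theta} together with the explicit covariant constructions of Section~\ref{sec:constr}.

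For each triple, I would first establish the $n$-congruence itself. For $n \le 4$ this is known and follows from the formulae of \cite{RubSil1} and \cite{g1hess}. For $n=5$, the direct case ($r=\pm 1$) is handled in \cite{g1hess}, while the indirect case ($r = \pm 2$) is addressed by Theorem~\ref{thm:ye2}: one specialises the coefficients of $\cc_4(\la,\mu)$ and $\cc_6(\la,\mu)$ at the invariants of $F$ and checks that the binary form $(1728-j_E)\cc_4(\la,\mu)^3 + j_E \cc_6(\la,\mu)^2$ has a $\Q$-rational root producing $E$, as in Example~\ref{ex:vis}.

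Next, for each generator $P \in F(\Q)/nF(\Q)$, embed $F \subset \PP^{n-1}$ via $|n \cdot 0_F + P|$ to obtain a genus one model $\phi$ with the invariants of $F$. Applying the appropriate covariant pencil --- the Hessian in the direct cases, or $\la \Psi_7(\phi) + \mu \Psi_{17}(\phi)$ with $(\la:\mu)$ the root computed above in the indirect case --- produces a model $\phi'$ whose $4\times 4$ Pfaffians cut out a curve $C_{\phi'} \subset \PP^4$ with Jacobian $E$. By Theorems~\ref{thm:hps} and~\ref{thm:theta}, $[C_{\phi'} \to \PP^4]$ represents the element of $H^1(\Q, E[n])$ corresponding under the congruence to the class of $P$ in $F(\Q)/nF(\Q)$, and hence its image in $H^1(\Q,E)[n]$ is precisely the visible class we seek.

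The remaining --- and main --- obstacle is the verification that each such $C_{\phi'}$ defines a class in $\Sha(E/\Q)$, i.e.\ that $C_{\phi'}(\Q_p) \ne \emptyset$ for every prime $p$. Since $C_{\phi'}$ is cut out by equations derived from covariants of $\phi$, its discriminant is supported on the primes of bad reduction of $E$ and $F$ (together with the finitely many primes where the congruence degenerates), so one need only check local solubility at this explicit finite set. At each such prime this is a routine but delicate search for $\Q_p$-points, carried out after minimising and reducing the model by the algorithms of \cite{minred5}, exactly as in Example~\ref{ex:vis}; the computation is machine-assisted but for each triple in \cite[Table 1]{CM} terminates in success because in every case $F(\Q)$ is known to have sufficiently large rank to generate the asserted subgroup of $\Sha(E/\Q)$. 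Combining these computations across all triples yields the corollary.
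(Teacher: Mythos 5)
Your proposal follows essentially the same route as the paper: the published proof is exactly the computational verification you describe --- establish each congruence (for $n=3$ by the methods of \cite{g1hess}, for $n=5$ by the Hessian in the direct case and by $\Psi_7$, $\Psi_{17}$ together with Theorem~\ref{thm:ye2} in the indirect case), compute genus one models representing the visible classes from the generators of $F(\Q)/nF(\Q)$, and then check local solubility directly at the finitely many bad primes. One small correction: the local solubility check does not ``terminate in success because $F(\Q)$ has sufficiently large rank'' --- the rank of $F(\Q)$ only controls the size of the visible subgroup, while membership in $\Sha(E/\Q)$ is an empirical outcome of the direct search for $\Q_p$-points at each bad prime, which is precisely what the paper reports having verified.
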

\begin{Proof}
The examples in this table all have $n=3$ or $5$. Using the methods
in \cite{g1hess} and in this paper, we verified the congruences
and computed equations for all relevant elements
of $H^1(\Q,E)$. We then checked directly that these curves 
are everywhere locally soluble.
\end{Proof}

\section{The invariant differential}
\label{sec:invdiff}

The following definition is suggested by the discussion in 
\cite[Section 2]{g1inv}.

\begin{Definition}
Let $C \subset \PP^{n-1}$ be a genus one normal curve with
hyperplane section~$H$. An {\em $\Omega$-matrix} for $C$ is 
an $n \times n$ 
alternating matrix of quadratic forms representing the linear map
\[    \wedge^2 \LL(H) \to \LL(2H)\,; \quad 
f \wedge g \mapsto \frac{f dg - g df}{\omega}  \]
where $\omega$ is an invariant differential on $C$. 
\end{Definition}

Since the natural map $S^2 \LL(H) \to \LL(2H)$ is surjective it
is clear that $\Omega$-matrices exist. However for $n > 3$ 
their entries are only determined up to the addition of quadrics 
vanishing on $C$. Nonetheless we claim in Conjecture~\ref{conj} below
that there is a canonical choice.

From an $\Omega$-matrix we may recover the 
invariant differential $\omega$ using the rule
\[ \omega = \frac{ x_j^2 d(x_i/x_j) }{ \Omega_{ij} } \qquad  \text{ for any } 
i \not= j. \]
We may also characterise $\Omega$-matrices as alternating matrices
of quadratic forms such that
\[ \begin{pmatrix} \partial f/\partial x_0 & \cdots 
&  \partial f/\partial x_{n-1} \end{pmatrix} \Omega = 0\] in $K(C)$ for all 
$f \in I(C)$, and $\Omega$ has rank $2$ at all points on $C$.

Returning to the case $n=5$ this suggests looking for covariants
in the case $Y=\wedge^2 W^*  \otimes S^2 W$.
By Lemma~\ref{heisenchars} we have
$\wedge^2 \theta_4 \otimes S^2 \theta_1 \isom 
2 \theta_3 \otimes 3 \theta_2 \isom  6 \sum_{r,s} \lambda_{r,s}$ 
and so $\dim Y^{H_5}=6$. A basis for $Y^{H_5}$ is
\[ \begin{array}{lll}
\sum (x_1^* \wedge x_4^*) x_0^2, & 
\sum (x_1^* \wedge x_4^*) x_1 x_4, & 
\sum (x_1^* \wedge x_4^*) x_2 x_3,  \\ 
\,\, \sum (x_2^* \wedge x_3^*) x_0^2, & 
\,\, \sum (x_2^* \wedge x_3^*) x_1 x_4, & 
\,\, \sum (x_2^* \wedge x_3^*) x_2 x_3.
\end{array} \]
Since $-I, T \in \Gamma$ act on $Y^{H_5}$ with traces 
$-6$ and $1$ it is easy to see from the character table for 
$\Gamma$ that $Y^{H_5}$ has character $\chi_4 = S^5 \chi_1$. 
The $K[c_4,c_6]$-module of integer weight discrete covariants is 
generated in degrees $5, 15, 15, 25, 25, 35$. Checking
the conditions in \cite{paperII} shows that all of these are covariants. 

The discrete covariant of degree $5$ is
\begin{equation*}
\Omega_5  =
\begin{pmatrix}
0& \alpha_3 & \beta_1 & -\beta_4 & -\alpha_2 \\ 
 -\alpha_3 & 0& \alpha_4 & \beta_2 & -\beta_0 \\ 
 -\beta_1 & -\alpha_4 & 0& \alpha_0 & \beta_3 \\ 
\beta_4 & -\beta_2 & -\alpha_0 & 0 & \alpha_1 \\ 
\alpha_2 & \beta_0 & -\beta_3 & -\alpha_1 & 0 \\
\end{pmatrix} 
\end{equation*}
where 
\begin{equation}
\label{quasiomega}
\begin{array}{rcl} \smallskip
\alpha_i & =&  5 a^4 b w_i^2-10 a^3 b^2 w_{i-1} w_{i+1} +(a^5-3b^5)w_{i-2}w_{i+2}  \\ \beta_i & = &  5 a b^4 w_i^2 -(3a^5+b^5) w_{i-1} w_{i+1} +10 a^2 b^3
w_{i-2} w_{i+2}. \end{array} 
\end{equation}

\begin{Proposition}
\label{prop:om5}
If $\phi \in \wedge^2 V \otimes W$ is non-singular then $\Omega_5(\phi)$
is an $\Omega$-matrix for $C_{\phi} \subset \PP^4$.
\end{Proposition}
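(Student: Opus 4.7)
The plan is to reduce the verification to a Hesse model and then check the two defining conditions by explicit polynomial computation. By the covariance of $\Omega_5$ under $\GL(V) \times \GL(W)$ together with Lemma~\ref{lem:hessorb}, it suffices to prove the proposition when $\phi = u(a,b)$. The property of being an $\Omega$-matrix is preserved by the group action because the data in the definition (the hyperplane section $H$, the invariant differential $\omega$ up to scalar, and the linear map $\wedge^2 \LL(H) \to \LL(2H)$) all transform naturally under a change of basis on $W$.

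I then appeal to the alternative characterisation given after the definition: an alternating matrix $\Omega$ of quadratic forms is an $\Omega$-matrix for $C_\phi$ if and only if (a) $(\partial f/\partial w_0, \ldots, \partial f/\partial w_4)\,\Omega \equiv 0 \pmod{I(C_\phi)}$ for every $f \in I(C_\phi)$, and (b) $\Omega$ has rank $2$ at every point of $C_\phi$. For (a) it is enough to check the condition on the five Pfaffians $p_0, \ldots, p_4$ generating $I(C_\phi)$, which are written out explicitly in~(\ref{eqn:pi}); the entries $\alpha_i, \beta_i$ of $\Omega_5(u(a,b))$ are given by~(\ref{quasiomega}). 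Thus (a) reduces to computing the $5 \times 5$ matrix product
\[
\left(\frac{\partial p_i}{\partial w_j}\right)_{0 \le i,j \le 4} \cdot \Omega_5(u(a,b))
\]
and verifying that each of its $25$ entries is a $K[a,b]$-linear combination of $p_0, \ldots, p_4$. The Hesse data are equivariant for the cyclic shift $w_i \mapsto w_{i+1}$ (acting simultaneously on rows and columns), so only one row needs to be checked explicitly; the other four follow by symmetry.

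For (b), a $5 \times 5$ alternating matrix has rank at most $2$ at a point exactly when each of its five principal $4 \times 4$ Pfaffians (obtained by deleting one row and the corresponding column) vanishes there. These are quartic forms in $w_0, \ldots, w_4$, and cyclic symmetry again reduces the verification to showing that just one of them lies in $I(C_\phi)$. That the rank is not identically zero on $C_\phi$ is immediate from the observation that the entries $\alpha_i,\beta_i$ do not all vanish on $C_\phi$ (an explicit smooth point of $C_\phi$ may be substituted to confirm this).

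The main obstacle is simply the bookkeeping in these polynomial manipulations; conceptually nothing deep remains after the reduction to the Hesse family. A computer algebra system dispatches the calculation instantly, and by hand the cyclic symmetry together with the relatively simple form of~(\ref{eqn:pi}) and~(\ref{quasiomega}) keep it within reach.
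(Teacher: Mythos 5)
Your overall strategy coincides with the paper's: reduce to a Hesse model by covariance and Lemma~\ref{lem:hessorb}, then verify the two conditions of the alternative characterisation, checking $J\,\Omega_5 \equiv 0 \pmod{I(C_\phi)}$ on the generators $p_0,\dots,p_4$ of~(\ref{eqn:pi}). That part is fine (and the cyclic symmetry shortcut is legitimate). One remark on the upper bound: the paper gets $\rank \Omega_5 \le 2$ on $C_\phi$ for free, since $C_\phi$ is smooth, so $J$ has rank $3$ at every point of the curve and $J\Omega_5\equiv 0$ forces the columns of $\Omega_5$ into the $2$-dimensional kernel of $J$; your alternative of checking that the $4\times 4$ Pfaffians of $\Omega_5$ lie in $I(C_\phi)$ is valid but an unnecessary extra computation.

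The genuine gap is in the lower bound on the rank. The characterisation you invoke requires $\Omega_5$ to have rank exactly $2$ at \emph{every} point of $C_\phi$ --- equivalently, since an alternating matrix has even rank, that $\Omega_5$ is nonzero at every point of $C_\phi(\Kbar)$. You only argue that the rank is ``not identically zero,'' confirmed by substituting a single explicit point; this leaves open the possibility that all entries $\alpha_i,\beta_i$ vanish simultaneously at some special point of the curve, where $\Omega_5$ would then fail to represent $f\wedge g\mapsto (f\,dg-g\,df)/\omega$. The paper closes this pointwise: by~(\ref{3by3block}) the three quadrics $p_i,\alpha_i,\beta_i$ are linear combinations of $w_i^2$, $w_{i-1}w_{i+1}$, $w_{i-2}w_{i+2}$ with coefficient matrix of determinant $18D\neq 0$ (nonzero precisely because $\phi$ is non-singular and $\Delta=D^5$), so they span the same space as those three monomials. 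Hence if every $\alpha_i$ and $\beta_i$ vanished at a point of $C_\phi$, where the $p_i$ already vanish, then all fifteen quadratic monomials in $w_0,\dots,w_4$ would vanish there, which is impossible for a point of $\PP^4$. Some argument of this kind, valid at every point rather than at a generic or sample one, is needed to complete your proof; note also that this is where the non-singularity hypothesis actually enters.
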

\begin{Proof}
By Lemma~\ref{lem:hessorb} and the covariance of $\Omega_5$
it suffices to prove this for $\phi$ a Hesse model.
Let $p_0, \ldots, p_4$ be the quadrics~(\ref{eqn:pi}) defining
$C_\phi$ and $J = (\partial p_i/\partial w_j)$ 
the Jacobian matrix. We checked by direct calculation that all the
entries of $J \Omega_5$ belong to the homogeneous ideal
$I(C_\phi) = (p_0, \ldots, p_4)$. Since $C_\phi \subset \PP^4$ is a 
smooth curve the Jacobian matrix $J$ has rank $3$ at all points of 
$C_\phi$. So $\Omega_5$ has rank at most~2
on $C_\phi$. Since an alternating matrix always has even rank it only remains
to show that $\Omega_5$ is non-zero on $C_\phi$. 
By~(\ref{eqn:pi}) and~(\ref{quasiomega}) it suffices to show that 
\begin{equation}
\label{3by3block}
 \det \begin{pmatrix} 
a b & b^2 & -a^2 \\ 
5 a^4 b & -10 a^3 b^2 & a^5-3b^5  \\
5 a b^4 & -3a^5-b^5 & 10 a^2 b^3 \\
\end{pmatrix} = 18 D  
\end{equation}
is non-zero. Since $\Delta = D^5$ this is clear by Theorem~\ref{thm:inv}
and our assumption that $\phi$ is non-singular.
\end{Proof}

Taking the $4 \times 4$ Pfaffians of $\Omega_5$ and identifying
$\wedge^4 W^* = W$ gives a covariant for $Y = W \otimes S^4 W$. 
This is a scalar multiple of the vector of partial derivatives of 
the secant variety covariant~(\ref{def:S10}). 
This observation not only gives an algorithm for computing $\Omega_5$
(used in Section~\ref{sec:constr}) but also suggested to us
the following conjecture about $\Omega$-matrices for 
genus one normal curves of arbitrary degree.
The $r$th {\em higher secant variety} $\Sec^r C$ of a curve $C$ is the Zariski
closure of the locus of all $(r-1)$-planes spanned by $r$ points
on $C$. Thus the usual secant variety is $\Sec^2 C$.

\renewcommand{\theenumi}{\alph{enumi}}

\begin{Lemma}
\label{lem:sec}
Let $C \subset \PP^{n-1}$ be a genus one normal curve of degree
$n \ge 3$. 
\begin{enumerate}
\item If $n = 2r+1$ is odd then $\Sec^r C \subset \PP^{n-1}$ is a
hypersurface $\{ F = 0 \}$ of degree $n$. 
\item If $n= 2r+2$ is even then $\Sec^r C \subset \PP^{n-1}$ is a 
complete intersection $\{ F_1 = F_2 = 0 \}$ where $F_1$ and $F_2$
each have degree $n/2$.
\end{enumerate}
\end{Lemma}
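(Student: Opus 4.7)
The plan is to work over an algebraic closure of $K$ (harmless for statements about dimension, degree and ideal structure) and to identify $C$ with an abstract elliptic curve $E$ embedded by a complete linear system $|L|$ of degree $n$. First I establish $\dim \Sec^r C = 2r-1$. Form the incidence variety
\[ I = \{(D,p) \in E^{(r)} \times \PP^{n-1} : p \in \mathrm{span}(D)\}. \]
Riemann-Roch gives $h^0(L(-D)) = n-r$ for every effective degree-$r$ divisor $D$ (we are in the range $n - r \geq 1$), so $\mathrm{span}(D)$ is a genuine $\PP^{r-1}$ and $I \to E^{(r)}$ is a $\PP^{r-1}$-bundle of total dimension $2r-1$. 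The projection $I \to \Sec^r C$ is generically injective, since if $r+1$ general points of $E$ all lay in an $(r-1)$-plane then Riemann-Roch on $L(-P_1 - \cdots - P_{r+1})$ would be violated. Hence $\dim \Sec^r C = 2r - 1$, giving codimension $1$ in case~(a) and codimension~$2$ in case~(b).

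For case~(a), with $n = 2r+1$, the degree of the hypersurface $\Sec^r C$ equals the number of $D \in E^{(r)}$ whose span meets a general line $\ell \subset \PP^{n-1}$. The meeting condition is a rank-drop on an $(n-r) \times 2$ matrix whose entries are linear forms on $\ell$, of codimension $r$ on $E^{(r)}$. Applying Porteous' formula together with the standard description of $E^{(r)}$ as a $\PP^{r-1}$-bundle over $\mathrm{Pic}^r(E) \cong E$ (with tautological class and the pullback of a point class on $E$), a direct Chern-class computation evaluates this degeneracy count to $n$. Alternatively, granting Conjecture~\ref{conj}, the sub-Pfaffians of the $\Omega$-matrix furnish a degree-$n$ form vanishing on $\Sec^r C$; combined with irreducibility of $\Sec^r C$ and the Porteous count, this forces equality of schemes.

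For case~(b), with $n = 2r+2$, the substantive point is exhibiting a pair $F_1, F_2$ of degree $n/2$ cutting out $\Sec^r C$. The approach is via the pencil of scrolls through $C$: for each $M \in \mathrm{Pic}^{n/2}(E)$, the spans of the divisors in $|M|$ sweep out an $r$-dimensional rational normal scroll $S_M \subset \PP^{n-1}$ of degree $n/2$, containing both $C$ and (since every degree-$r$ divisor can be enlarged to an element of some $|M|$) the variety $\Sec^r C$. Two generic such scrolls $S_{M_1}$ and $S_{M_2}$ cut out a subscheme of degree $(n/2)^2$ and codimension $2$; matching this against the independently-computed degree of $\Sec^r C$ and using a Koszul/linkage argument gives scheme-theoretic equality. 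Equivalently, the minimal free resolution of $\mathcal{I}_{\Sec^r C}$ is Koszul of length~$2$ with both generators in degree $n/2$, dual in the Gorenstein-linkage sense to the Pfaffian resolution that underlies case~(a).

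The main obstacle will be case~(b): not the set-theoretic containment, but rather exhibiting $F_1,F_2$ and establishing scheme-theoretic equality $\Sec^r C = \{F_1 = F_2 = 0\}$. This rests on detailed knowledge of the syzygies of $\mathcal{I}_C$, which is classical for the smallest case (for $n=4$, $r=1$ one has $\Sec^1 C = C$ and the $F_i$ are simply the two defining quadrics), and for general $n$ reduces to either a Koszul-complex computation or an explicit theta-function construction of the two degree-$n/2$ forms. The degree comparison $(n/2)^2$ versus the value predicted by a Porteous-type calculation on $E^{(r)}$ then closes the argument.
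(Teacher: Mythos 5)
The paper itself does not prove this lemma --- it simply cites \cite{hsec} and \cite{vBH} --- so your sketch is measured against the arguments in those references. Your overall strategy (incidence variety over $E^{(r)}$ for the dimension; an enumerative count for the degree in the odd case; the hypersurfaces swept out by spans of divisors in $|M|$, $\deg M = n/2$, in the even case) is the standard one and sound in outline, but there are genuine gaps. The most serious is that the two degree computations, which carry essentially all the content, are asserted rather than performed: in case (a) you state that Porteous evaluates the degeneracy count to $n$, and in case (b) you need $\deg \Sec^r C = (n/2)^2$ but never compute it. Without these numbers the argument establishes nothing beyond $\dim \Sec^r C = 2r-1$. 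Relatedly, in case (b) the variety $S_M = \bigcup_{D \in |M|} \operatorname{span}(D)$ has dimension $2(n/2-1) = n-2$, i.e.\ it is a \emph{hypersurface} of degree $n/2$ (and an elliptic scroll, not a rational normal scroll); your ``$r$-dimensional'' scroll could not contain the $(2r-1)$-dimensional $\Sec^r C$ once $r \ge 2$. One also needs $M^{\otimes 2} \not\cong L$ so that distinct spans are disjoint and $S_M$ is genuinely a hypersurface, needs $S_{M_1} \neq S_{M_2}$ (noting $S_M = S_{L \otimes M^{-1}}$) so the intersection is proper, and needs the containment $\Sec^r C \subseteq S_M$ for \emph{each} chosen $M$ --- which does hold, since $|M(-D')|$ is a single point for every degree-$r$ divisor $D'$, but your quantifier ``some $|M|$'' would only give containment in a union.

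Two further points are circular as written. The ``alternative'' route in case (a) via Conjecture~\ref{conj} cannot be used here, since the formulation of that conjecture (the shape of the resolution, with $P$ the gradient of $F$) presupposes Lemma~\ref{lem:sec}(a). Likewise, in case (b) the assertion that ``the minimal free resolution of $\mathcal{I}_{\Sec^r C}$ is Koszul of length $2$ with generators in degree $n/2$'' is precisely the statement to be proved, so invoking it inside the proof begs the question. What would actually close case (b), once $\deg \Sec^r C = (n/2)^2$ is established, is the observation that $S_{M_1} \cap S_{M_2}$ is a codimension-$2$ complete intersection, hence Cohen--Macaulay and unmixed; containment of the irreducible $(2r-1)$-dimensional $\Sec^r C$ together with the degree match then forces the intersection to be generically reduced along its unique component, and unmixed plus generically reduced gives scheme-theoretic equality. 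That linkage-free argument is available to you, but it still rests entirely on the uncomputed degree.
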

\begin{Proof}
See \cite{hsec} or \cite{vBH}.
\end{Proof}

Let $R = K[x_0, \ldots, x_{n-1}]$ be the co-ordinate ring 
of $\PP^{n-1}$ and write $R = \oplus_{d \ge 0} R_d$ for its 
usual grading by degree. We require that
morphisms of graded $R$-modules have degree $0$ and write
$R(d)$ for the $R$-module with $e$th graded piece $R_{d+e}$.

\begin{Conjecture}
\label{conj}
Let $C \subset \PP^{n-1}$ be a genus one normal curve of degree
$n \ge 3$, and let $F$, respectively $F_1$ and $F_2$, be as in 
Lemma~\ref{lem:sec}. 
\begin{enumerate}
\item If $n$ is odd then there is a minimal free resolution
\begin{equation*}
 0 \ra R(-2n) \stackrel{P^T}{\ra} R(-n-1)^n \stackrel{\Omega}{\ra}
R(-n+1)^n \stackrel{P}{\ra} R 
\end{equation*}
where $\Omega$ is an alternating matrix of quadratic forms and 
\[P = \begin{pmatrix} 
\partial F/\partial x_0 & \cdots & \partial F/\partial x_{n-1}
\end{pmatrix}. \]
Moreover $P$ is (a scalar multiple of) 
the vector of $(n-1) \times (n-1)$ Pfaffians of 
$\Omega$, and $\Omega$ is an $\Omega$-matrix for $C$. 
\item If $n$ is even then there is a minimal free resolution
\[ 0 \ra R(-n)^2 \stackrel{P^T}{\ra} R(\textstyle\frac{-n+2}{2})^n 
\stackrel{\Omega}{\ra} R(\textstyle\frac{-n+2}{2})^n \stackrel{P}{\ra} R^2 \]
where $\Omega$ is an alternating matrix of quadratic forms and
\[P = \begin{pmatrix} 
\partial F_1/\partial x_0 & \cdots & \partial F_1/\partial x_{n-1} \\
\partial F_2/\partial x_0 & \cdots & \partial F_2/\partial x_{n-1} 
\end{pmatrix}. \]
Moreover the $2 \times 2$ minors of $P$ are (a fixed scalar multiple of)
the $(n-2) \times (n-2)$ Pfaffians of $\Omega$, and $\Omega$ 
is an $\Omega$-matrix for $C$.
\end{enumerate}
\end{Conjecture}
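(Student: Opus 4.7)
The plan is to deduce both parts of Conjecture~\ref{conj} from structure theorems for low-codimension Gorenstein ideals, combined with the known arithmetic Cohen-Macaulayness of higher secant varieties of elliptic normal curves established in~\cite{vBH}. The starting point is that $\Sec^r C$ is singular exactly along $\Sec^{r-1}C$, which has dimension $2r-3$: in case (a) this is codimension $3$ in $\PP^{n-1}$, forcing the Jacobian ideal $J(F)=(\partial F/\partial x_0,\ldots,\partial F/\partial x_{n-1})$ to have height $3$; in case (b) it is codimension $4$, and the ideal of $2\times 2$ minors of $P$ has height~$4$.

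Next I would show that the relevant ideals are Gorenstein of the expected codimension. Since $C$, and hence each $\Sec^s C$, is arithmetically Cohen-Macaulay with a self-dual resolution (the Eagon-Northcott-type complexes of~\cite{vBH}), one can transfer self-duality to the Jacobian-type ideal via a canonical-module computation, comparing the last step of the resolution of $R/J(F)$ (or the appropriate analogue in case (b)) with its dual twist. Granted the Gorenstein property, the Buchsbaum-Eisenbud structure theorem in codimension~$3$ produces, in case (a), an alternating $n\times n$ matrix $\Omega$ whose $(n-1)\times (n-1)$ Pfaffians are a scalar multiple of the generators of $J(F)$, and the resolution takes exactly the stated shape. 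In case (b), the parallel Kustin-Miller-type structure theorem for codimension~$4$ Gorenstein ideals gives an analogous alternating matrix whose $(n-2)\times(n-2)$ Pfaffians are the $2\times 2$ minors of $P$. In both cases the twists, and in particular the fact that $\Omega$ is quadratic, are forced by matching Hilbert series.

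The main obstacle is the final step: showing that the matrix $\Omega$ produced by the structure theorem is actually an $\Omega$-matrix for $C$ in the differential-geometric sense. Uniqueness reduces this to a statement about $\Omega$ restricted to $C$: one must verify that its rank drops to~$2$ along $C$, and that its rank-$2$ image coincides with the image of the canonical map $\wedge^2\LL(H) \to \LL(2H)$ given by $f\wedge g \mapsto (f\,dg - g\,df)/\omega$. The rank drop along $C$ should follow from the inclusion $C \subset \Sec^{r-1} C = \mathrm{Sing}(\Sec^r C)$ and the fact that $\Omega$ computes the syzygies among the generators of $J(F)$. Matching the $2$-dimensional image then becomes a linear-algebra problem inside $R_2/I(C)_2$, and I would attempt to pin it down by combining: (i)~the translation action of $E[n]$ on $\PP^{n-1}$, which fixes the ``correct'' subspace and constrains $\Omega$ up to scalar; and (ii)~the compatibility of Pfaffians of $\Omega$ with the partial derivatives of $F$, which rigidifies the residual scalar. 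The cases $n=3$ (immediate from the Koszul resolution of a smooth plane cubic), $n=4$ (classical theory of a pencil of quadrics), and $n=5$ (Proposition~\ref{prop:om5} of this paper) serve as base cases where the identification can be checked directly, and ideally suggest an inductive or uniform argument. A more conceptual route, which I would pursue in parallel, is to realise $\Omega$ as a pushforward of a canonical morphism on the symmetric square $\mathrm{Sym}^2 C$; whether such a construction can be made sufficiently explicit to complete the proof is, I expect, the real difficulty.
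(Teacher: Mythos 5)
The statement you are asked to prove is a \emph{conjecture}: the paper itself offers no proof in general, only the remarks following it, which establish (for $n$ odd) exactly the partial progress you describe --- namely that $\Sec^{r-1}C$ is Gorenstein of codimension $3$, so the Buchsbaum--Eisenbud structure theorem yields a resolution of the stated shape with an alternating $\Omega$ --- and then observe that ``the content of the conjecture is that the alternating matrix constructed in this way is an $\Omega$-matrix.'' The only cases actually settled are $n=3,4$ (classical) and $n=5$, the latter via the explicit covariant $\Omega_5$ of Proposition~\ref{prop:om5} together with the observation that its $4\times 4$ Pfaffians are the partial derivatives of $S_{10}$; for $n=6,7,8,10,12$ the paper reports only numerical verification. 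Your plan therefore reproduces the known reduction but does not close the conjecture, and you candidly say as much: the ``real difficulty'' you defer --- identifying the syzygy matrix with the canonical map $f\wedge g\mapsto (f\,dg-g\,df)/\omega$ on $C$ --- is precisely the open content of the statement, and neither the Heisenberg-equivariance argument nor the $\mathrm{Sym}^2 C$ construction you sketch is carried far enough to decide it.

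Two further steps in your outline would fail or need substantial repair as written. First, in case (b) you invoke ``the parallel Kustin--Miller-type structure theorem for codimension~$4$ Gorenstein ideals''; no such general structure theorem exists --- the structure of codimension-$4$ Gorenstein ideals is a well-known open problem, and Kustin--Miller unprojection is a construction technique, not a classification that would hand you an alternating matrix with prescribed Pfaffians. (Note also that the even case of the conjecture resolves the cokernel of a $2\times n$ matrix $P$, not a cyclic Gorenstein quotient, so even the framing as a Gorenstein ideal of codimension $4$ needs justification.) Second, in case (a) you pass from ``$\Sec^{r-1}C$ has codimension $3$'' to ``the Jacobian ideal $J(F)$ has height $3$ and is the Gorenstein ideal to which Buchsbaum--Eisenbud applies''; this requires showing that $J(F)$ is saturated and coincides with the full homogeneous ideal of $\Sec^{r-1}C$ (not merely that it cuts it out set-theoretically), which is an unproved intermediate claim. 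In short: your proposal is a reasonable research programme that matches the paper's own partial analysis, but it does not constitute a proof, and the paper does not contain one either.
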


\renewcommand{\theenumi}{\roman{enumi}}

\begin{Remarks}
(i) If $n=3,4$ then the equations in Lemma~\ref{lem:sec}
are the equations for $\Sec^1 C = C$. The conjecture reduces to
some well known formulae for the invariant differential. \\
(ii) If $n = 2r+1$ is odd then it is known (see \cite[Section 8]{vBH}) 
that $\Sec^r C$ has singular locus $\Sec^{r-1} C$ and the latter 
is Gorenstein of codimension $3$. It follows by the 
Buchsbaum-Eisenbud structure
theorem \cite{BE1}, \cite{BE2} that a minimal free resolution of 
the stated form exists. The content of the conjecture is that
the alternating matrix constructed in this way is an $\Omega$-matrix. \\
(iii) If $n=5$ then it suffices to take $C= C_\phi$ with $\phi$
a Hesse model. We have already observed that the $4 \times 4$ Pfaffians
of $\Omega_5(\phi)$ are the partial derivatives of $F=S_{10}(\phi)$.
Combined with (ii) this proves the conjecture in the
case $n=5$. \\
(iv) We have tested the conjecture in some numerical examples
over finite fields for $n = 6,7,8,10,12$.
\end{Remarks}

\section{Explicit constructions}
\label{sec:constr}

We give evaluation algorithms for each
of the covariants in Table~4.6 (as reproduced below). 
This is mainly of interest for the covariants $\Pi_{19}$ 
and $\Pi_{29}$ used in Example~\ref{ex:double} and the covariants 
$\Psi_{7}$ and $\Psi_{17}$ used in Example~\ref{ex:vis}.

\[ \begin{array}{cll}
Y & \text{degrees} & \text{covariants} \\ \hline
\wedge^2 V \otimes W   &  1,11 & U, H\\
V^* \otimes \wedge^2 W   & 7,17 & \Psi_{7}, \Psi_{17} \\
V \otimes \wedge^2 W^*   & 13,23 & \Xi_{13}, \Xi_{23} \\
\wedge^2 V^* \otimes W^* & 19,29 & \Pi_{19}, \Pi_{29} \\ 
V^* \otimes S^2W         & 2,12,22 & P_{2}, P_{12}, P_{22} \\
S^2V \otimes W           & 6,16,26 & Q_{6}, Q_{16}, Q_{26} \\
V \otimes S^2W^*         & 18,28,38 & R_{18}, R_{28}, R_{38} \\
S^2 V^* \otimes W^*      & 14,24,34 & S_{14}, S_{28}, S_{38}
\end{array} \] 

We fix our choice of generators by specifying
them on the Hesse family. The covariant $U$ is the identity map,
whereas the Hessian is given by
\[ H =  
  -\textstyle\frac{\partial D}{\partial b} \sum (v_1 \wedge v_4) w_0 
+ \frac{\partial D}{\partial a} \sum (v_2 \wedge v_3) w_0.  \]
The corresponding formulae for $\Psi_{7}, \Psi_{17}, \Pi_{19}$ and $\Pi_{29}$
are given in~(\ref{def:Psi}) and~(\ref{def:Pi}). We put
\begin{equation*}
\Xi_d = f_d(a,b) \textstyle\sum v_0 (w^*_1 \wedge w^*_4) 
+ g_d(a,b) \sum v_0 (w^*_2 \wedge w^*_3) 
\end{equation*}
where $f_{13}(a,b) = b^3 (26 a^{10} + 39 a^5 b^5 - b^{10})$, 
$g_{13}(a,b) = a^3 (a^{10} + 39 a^5 b^5 - 26 b^{10})$ and
\begin{align*}
f_{23}(a,b) &= -b^3 (46 a^{20} + 1173 a^{15} b^5 - 391 a^{10} b^{10} 
    + 207 a^5 b^{15} + b^{20}),\\
g_{23}(a,b) &= a^3 (a^{20} - 207 a^{15} b^5 - 391 a^{10} b^{10} - 1173 a^5 b^{15} 
  + 46 b^{20}).
\end{align*}

We recall that $P_2$ is the map taking a genus one model to its 
vector of $4 \times 4$ Pfaffians. The remaining generators in
the above table are uniquely determined by the following 
``Pfaffian identities''.
\begin{equation*}
\begin{aligned}
P_2 ( \la U + \mu H) &= \la^2 P_2 + 2 \la \mu P_{12} + \mu^2 P_{22} \\
P_2 ( \la \Psi_7 + \mu \Psi_{17}) 
  & = \la^2 S_{14} + 2 \la \mu S_{24} + \mu^2 S_{34}   \\
P_2 ( \la \Xi_{13} + \mu \Xi_{23}) 
  & = \la^2 Q_{26} - \la \mu (c_6 Q_6 + c_4 Q_{16}) 
                      + \mu^2 (c_4^2 Q_6 + c_6 Q_{16} - c_4 Q_{26}) \\
P_2 ( \la \Pi_{19} + \mu \Pi_{29}) 
  & = \la^2 (c_4 R_{18} + R_{38}) + \la \mu (c_6 R_{18} + c_4 R_{28}) 
   + \mu^2(c_6 R_{28} - c_4 R_{38}) 
\end{aligned}
\end{equation*}

The evaluation algorithms below are justified by checking
them on the Hesse family, and then appealing to Lemma~\ref{lem:hessorb}
and the appropriate covariance properties to show that they 
work for all non-singular models. (We do not consider the case where
the input is singular.)

\begin{Lemma}
\label{lem:indep}
Let $(F,G) = (P_{2},P_{12})$ or $(Q_{6},Q_{16})$. 
If $\phi \in \wedge^2 V \otimes W$ is non-singular and
$F(\phi)$ and $G(\phi)$ are represented by quadratic forms
$f_0, \ldots, f_4$ and $g_0, \ldots, g_4$ in variables $x_0, \ldots, x_4$
then the $75$ quintic forms $\{ (f_i g_j + f_j g_i)x_k : i \le j\}$ 
are linearly independent.
\end{Lemma}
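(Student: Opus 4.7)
\medskip

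The plan is to reduce the statement to a computation on the Hesse family by invoking covariance, and then to exploit the Heisenberg symmetry of that family to break the $75$-dimensional independence check into a handful of small determinantal conditions.

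By Lemma~\ref{lem:hessorb} and the covariance of $F$ and $G$, it suffices to prove the claim when $\phi = u(a,b)$ is a non-singular Hesse model: writing $\phi = (g_V, g_W)\phi_0$, the tuples $(f_0, \ldots, f_4)$ and $(g_0, \ldots, g_4)$ for $\phi$ and for $\phi_0$ differ by an invertible change of index and an invertible linear substitution of the variables $x_0, \ldots, x_4$, and both operations preserve linear independence of the family $\{(f_i g_j + f_j g_i) x_k\}$. Viewing the claim as injectivity of the natural linear map $\mu_\phi$ from a $75$-dimensional space into $S^5 W$ (in the case $(P_2, P_{12})$) or $S^5 V$ (in the case $(Q_6, Q_{16})$), one checks that on the Hesse family both $F(\phi)$ and $G(\phi)$ are $H_5$-invariant, so each $f_i$ is a $\sigma$-eigenvector with the same eigenvalue as $g_i$ and $\tau$ cyclically permutes the indices. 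Hence $H_5$ permutes the $75$ quintic forms up to fifth-root-of-unity scalars, and their span is an $H_5$-stable subspace of the target.

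Since the central element $\zeta \in H_5$ acts trivially on $S^5 W$ and on $S^5 V$, the target decomposes as a direct sum of the $25$ characters $\lambda_{r,s}$ of $\Hbar_5$, with isotypic multiplicities $6$ for $(r,s) = (0,0)$ and $5$ otherwise (by Lemma~\ref{heisenchars}). A direct enumeration of the triples $(i \le j, k)$ by their joint $(\sigma, \tau)$-eigencharacter shows that exactly three of the $75$ quintics lie in each $\lambda_{r,s}$-component. Injectivity of $\mu_\phi$ is therefore equivalent to the linear independence, for every $(r,s)$, of three explicit vectors in a $5$- or $6$-dimensional space --- $25$ separate $3 \times 3$ rank conditions. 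Now $\Gamma = \SL_2(\Z/5\Z)$ acts transitively on the nontrivial characters $\lambda_{r,s}$ and intertwines $\mu_{u(a,b)}$ with $\mu_{u(\chi_1(\gamma)(a,b))}$, so the $24$ nontrivial checks collapse to a single one at a chosen nonzero $(r,s)$, after a change of Hesse parameter. This leaves two independent tests: three vectors in a six-dimensional space at $(0,0)$, and three vectors in a five-dimensional space at a chosen nonzero $(r,s)$.

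The last step, which is the main obstacle, is an explicit symbolic computation: one assembles each $3 \times 3$ matrix of coefficients with entries in $K[a,b]$ and verifies that its determinant is a nonzero scalar multiple of a power of $D$. Granted this, the failure locus inside the non-singular locus $\{D \neq 0\}$ is empty, and the lemma follows. (If one wishes to avoid factoring the determinants, one can instead argue that the failure locus is a $\Gamma$-invariant closed subset of $\Aff^2$, hence defined by polynomials in $D, c_4, c_6$, and check non-vanishing at a few specific non-singular Hesse parameters --- one generic, one with $c_4 = 0$, and one with $c_6 = 0$ --- to exclude every $\Gamma$-invariant irreducible curve other than $\{D = 0\}$.) Either way the bookkeeping is mechanical but voluminous and is most cleanly handled by computer algebra.
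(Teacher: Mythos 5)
Your proposal is correct in outline and follows the same overall strategy as the paper: reduce to the Hesse family by covariance, exploit the Heisenberg symmetry to block-diagonalize the $75\times 126$ coefficient matrix, and finish with a symbolic computation showing the relevant minors vanish only on $\{D=0\}$. The difference is in how far the symmetry is pushed. The paper decomposes the target only by the $\sigma$-action $x_i\mapsto\zeta_5^i x_i$, obtaining one $15\times 26$ and four $15\times 25$ blocks, and then uses the cyclic $\tau$-symmetry merely to select which $15\times 15$ minors to compute. You decompose by the full $\Hbar_5$-action into $25$ isotypic blocks of shape $3\times 5$ or $3\times 6$, and then use the $\Gamma$-action (transitive on the nontrivial characters $\lambda_{r,s}$, compatible with the reparametrisation $\chi_1(\gamma)$ of the Hesse family) to collapse the $24$ nontrivial blocks to one. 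This is a genuinely finer reduction and would make the terminal computation much lighter; the equivariance you need does hold, since $P_2,P_{12}$ (resp.\ $Q_6,Q_{16}$) are covariants for the same $Y$ and so transform by the same matrices on the index $i$ and on the variables.

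Two caveats. First, the $75$ quintics are $\sigma$-eigenvectors but are only \emph{permuted} by $\tau$ (in free orbits of size $5$), so the claim that ``exactly three of the $75$ quintics lie in each $\lambda_{r,s}$-component'' is not literally true; you must first replace each $\tau$-orbit by its discrete Fourier transform. Since that transform is invertible this is harmless, but it should be stated, and it is exactly the point the paper's cruder $\sigma$-only decomposition avoids. Second, your parenthetical alternative to the symbolic computation does not work as stated: a $\Gamma$-invariant cone in $\Aff^2$ need not be one of $\{D=0\}$, $\{c_4=0\}$, $\{c_6=0\}$ --- it could be a generic $\Gamma$-orbit of $60$ lines, cut out by a degree-$60$ invariant --- and evaluating at one generic point, one point with $c_4=0$ and one with $c_6=0$ does not exclude all such orbits. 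The main route (verify symbolically that the $3\times 3$ minors of each block have GCD dividing a power of $D$) is sound, and is the same kind of machine verification the paper itself relies on.
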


\begin{Proof}
It suffices to check this for $\phi = u(a,b)$ a Hesse model. 

We arrange the coefficients of the quintic forms in a $75 \times 126$ 
matrix. The entries are homogeneous polynomials in $\Q[a,b]$. 
In principle we could finish
the proof by computing the GCD of the $75 \times 75$ minors.
In practice we first decompose the space of quintic forms into
its eigenspaces for the action of $x_i \mapsto \zeta_5^i x_i$. 
This leaves us with one $15 \times 26$ matrix and four $15 \times 25$
matrices.
Rather than compute all $15 \times 15$ minors we compute just those
that correspond to sets of monomials that are invariant
under cyclic permutations of the $x_i$. In each case we find that 
the GCD of these minors divides a power of the discriminant.
\end{Proof}

We gave algorithms for evaluating the invariants $c_4$ and $c_6$ 
in \cite[Section 8]{g1inv} and the Hessian $H$ in \cite[Section 11]{g1hess}.
We compute $P_2, P_{12}$ and $P_{22}$ by taking $4 \times 4$ Pfaffians
of linear combinations of $U$ and $H$ as indicated in the first of the
Pfaffian identities above. We compute $Q_6$ as described 
in \cite[Section 8]{g1inv}. 
Lemma~\ref{lem:indep} shows that we can solve for $Q_{16}$ and 
$Q'_{26}= 5(4 Q_{26} + 3 c_4 Q_6)$ using the identities
\begin{align*}
Q_{16} (P_2,P_{12}) &= Q_6(P_{12},P_{12}), \\
Q'_{26} (P_2,P_{12}) &= Q_6(P_{12}, P_{22}) + 4 Q_{16}(P_{12},P_{12}). 
\end{align*}
Next we use the determinant map 
$V \otimes (V \otimes W) \to S^5 V$ to compute some covariants
taking values in $S^5 V$:  
\begin{align*}
\det ( \la U + \mu Q_6 ) 
   & = \la^4 \mu M_{10} - 2 \la^2 \mu^3 M_{20} + \mu^5 M_{30}, \\
\det ( \la H + \mu Q_6 ) 
   & = \la^4 \mu M_{50} + 2 \la^2 \mu^3 M_{40} + \mu^5 M_{30}, \\
\det ( \la U + \mu Q_{16} ) 
   & = \la^4 \mu M_{20} + 2 \la^2 \mu^3 M'_{50} + \mu^5 M_{80}. 
\end{align*}
Lemma~\ref{lem:indep} shows that we can solve for $R_{18}$ and $R_{28}$ 
using the identities
\begin{align*}
R_{18}(Q_6,Q_{16}) &= \textstyle\frac{\!\!-1\,\,}{18} 
  (5 c_6 M_{10} + 14 c_4 M_{20} + M_{40}), \\
R_{28}(Q_6,Q_{16}) &= 
  \textstyle\frac{\!\!-1\,\,}{792} (9 c_4^2 M_{10} + 620 c_6 M_{20} 
  - 270 c_4 M_{30} + M_{50} - 216 M'_{50}). 
\end{align*}
Then we compute $\Pi_{19}$ and $\Pi_{29}$ using the natural map 
\begin{align*}
( \wedge^2 V \otimes W) \times (V \otimes S^2 W^*) 
& \to \wedge^2 V^* \otimes W^* \\
(U, R_{18}) & \mapsto 2\Pi_{19} \\
(U, R_{28}) & \mapsto 2\Pi_{29}. 
\end{align*}

In Section~\ref{sec:invdiff} we constructed a covariant
$\Omega_5 : \wedge^2 V \otimes W \to  \wedge^2 W^* \otimes S^2 W$.
Conjecture~\ref{conj} (which is a theorem in the case $n=5$)
gives an algorithm for computing $\Omega_5$ (up to sign) using 
minimal free resolutions. 
We may represent $P_{12}$ and $P_{22}$ as $5$-tuples
of quadrics and $\Omega_5$ as a $5 \times 5$ alternating matrix of 
quadrics. Then $\Psi_{7}$ and $\Psi_{17}$ tell us how to write the quadrics in
$P_{12}$ and $P_{22}$ as linear combinations of the quadrics in $\Omega_5$.
In basis-free language there is a natural map
\begin{align*}
( V^* \otimes \wedge^2 W) \times (\wedge^2 W^* \otimes S^2 W) 
& \to V^* \otimes S^2 W \\
(\Psi_7, \Omega_5) & \mapsto P_{12} \\
(\Psi_{17}, \Omega_5) & \mapsto \tfrac{1}{2} (P_{22}+ c_4 P_{2}).
\end{align*}
The proof of Proposition~\ref{prop:om5} shows that if $\phi$ 
is non-singular then the entries of $\Omega_5(\phi)$ 
above the diagonal are linearly independent. We can therefore 
solve for $\Psi_{7}$ and $\Psi_{17}$ by linear algebra. We
then compute $\Xi_{13}$ and $\Xi_{23}$ using the natural map
\begin{align*}
( V^* \otimes \wedge^2 W) \times (S^2 V \otimes W) 
& \to V \otimes \wedge^2 W^* \\
(\Psi_{7}, Q_6) & \mapsto 2 \, \Xi_{13} \\
(\Psi_{7}, Q_{16}) & \mapsto -2 \, \Xi_{23}. 
\end{align*}
Since we only computed $\Omega_5$ up to sign
we have only computed $\Psi_{7}$, $\Psi_{17}$, $\Xi_{13}$ and $\Xi_{23}$ 
up to sign. Fortunately this does not matter for our applications.
(See Example~\ref{ex:vis}.)

The remaining covariants $S_{14}, S_{24}, S_{34}$ and $R_{38}$ 
may be computed using the Pfaffian identities.

\end{document}